\documentclass[reqno]{amsart}
\usepackage[english]{babel}  
\usepackage{url}
\usepackage{graphicx}
\usepackage{amsopn}
\usepackage{amsmath, amsfonts, amssymb, tikz-cd, amsthm,xfrac}
\usepackage{mathdots}
\usepackage{setspace}
\usepackage{mathtools}
\usepackage{multirow}
\usepackage{caption}
\usepackage[all]{xy}
\usepackage{lscape}
\usepackage{comment}
\usepackage{float}
\usepackage{enumitem}
\usepackage{fancyhdr}
\usepackage{xcolor}
\usepackage{hyperref}

\newtheorem{theorem}{Theorem}[section]
\newtheorem*{theorem*}{Theorem}
\newtheorem{thm}{Theorem}[section]
\newtheorem*{problem}{Problem}

\newtheorem{lemma}[theorem]{Lemma}
\newtheorem{proposition}[theorem]{Proposition}
\newtheorem{corollary}[theorem]{Corollary}
\theoremstyle{definition}
\newtheorem{definition}[theorem]{Definition}
\theoremstyle{remark}

\numberwithin{equation}{section}
\newcommand{\stirling}[2]{\genfrac{[}{]}{0pt}{1}{#1}{#2}}

\newcommand{\bZ}{\mathbb Z} \newcommand{\bQ}{\mathbb Q}
\newcommand{\bC}{\mathbb C} 
 \newcommand{\mF}{\mathcal F}
\newcommand{\mG}{\mathcal G}   
\newcommand{\fF}{\mathfrak F}

\DeclareMathOperator{\Hom}{Hom}

\DeclareMathOperator{\Ind}{Ind}

\DeclareMathOperator{\GL}{GL}

\DeclareMathOperator{\Gal}{Gal}

\DeclareMathOperator{\M}{M}

\DeclareMathOperator{\GSp}{GSp}

\DeclareMathOperator{\tr}{Tr}

\DeclareMathOperator{\End}{End}
\DeclareMathOperator{\Rank}{Rank}

\DeclareMathOperator{\Res}{Res}
\DeclareMathOperator{\St}{St}
\DeclareMathOperator{\Mir}{Mir}

\newcommand{\bbsm}{\left (\begin{smallmatrix}}      \newcommand{\besm}{\end{smallmatrix}\right )}
\newcommand{\bsm}{\left[ \begin{smallmatrix}}      \newcommand{\esm}{\end{smallmatrix}\right]}
\newcommand{\bbm}{\begin{pmatrix}}      \newcommand{\bem}{\end{pmatrix}}

\begin{document}
\title[Twisted Jacquet modules of cuspidal representations of $\GL(n, q)$]{Twisted Jacquet modules associated to maximal parabolic subgroups and  cuspidal representations of $\GL(n, q)$}
\author{Kumar Balasubramanian} 
\email{bkumar@iiserb.ac.in}
\address{Department of Mathematics\\
IISER Bhopal\\
Bhopal, Madhya Pradesh 462066, India}
\author{Krishna Kaipa} 
\email{kaipa@iiserpune.ac.in}
\address{Department of Mathematics\\
IISER Pune\\
Pune, Maharashtra 411008, India}
\author{Himanshi Khurana}
\email{himanshikhurana@hri.res.in}
\address{ ${}^1$ Department of Mathematics\\
Harish-Chandra Research Institute\\
Prayagraj, Uttar Pradesh 211019, India
\newline ${}^2$ Homi Bhabha National Institute\\
Training School Complex, Anushakti Nagar\\
Mumbai 400094, India}

\subjclass{20G40, 20C33, 22E50}
\keywords{Cuspidal representation, Twisted Jacquet module}

\begin{abstract}
Let $\pi$ be a cuspidal representation of $\GL(n,F)$ over a finite field $F$. Let $P=MN$ be the Levi decomposition of a  maximal parabolic subgroup corresponding to the partition $(k,n-k)$ of $n$.  Given a  rank $r$ character $\psi_r$ of the unipotent radical $N$,  the 
twisted Jacquet module $\pi_{N, \psi_r}$ is a  representation of the subgroup $M_r$ of $M$ which stabilizes $\psi_r$. The problem we solve in this work is to determine the structure of 
$\pi_{N, \psi_r}$ as a  $M_r$-module.  This problem was first studied by D. Prasad, who solved it  for the case $r=k=n/2$. This and subsequent works on the problem for special cases of $(r,k,n)$, identify the structure of $\pi_{N, \psi_r}$ by calculating its character and matching it to a known representation of $M_r$.  In this work, we solve the problem for all values of  $(r,k,n)$ directly  without  calculating  the character of $\pi_{N, \psi_r}$. Our solution depends on two other key conceptual advances: (i) We generalize the Bernstein-Zelevinsky framework for studying  representations of the Mirabolic subgroup of $\GL(n,F)$, to maximal parabolic subgroups $P$. In particular, we show that the twisted Jacquet functor which takes a  representation of $P$ to its twisted Jacquet modules (one for each rank), gives an equivalence of categories between Rep$(P)$ and the direct sum $\oplus_r \text{Rep}(M_r)$.
(ii) Using this, we construct a pair of recursively defined representations $\Pi_{k,n}, \Pi_{n-k,n}^\dagger$ of $P$, which generalizes to $P$,  the representation of the Mirabolic subgroup obtained from the trivial representation by recursively applying  the Bernstein-Zelevinsky $\Phi^+$ functor. Like the representation $(\Phi^+)^{n-1}(1)$ of the Mirabolic subgroup, the representation $\Pi_{n-k,n}^\dagger$ satisfies a universal property with respect to restrictions to $P$ of cuspidal representations of $\GL(n,F)$. Our solution of the main problem is a simple consequence of this universal property.
\end{abstract}

\maketitle
\section{Introduction}  \label{introduction}
The study of Whittaker models for representations, in particular its multiplicity one property, has always played an important role in the representation theory of reductive groups over both finite and $p$-adic fields. However, many important representations do not admit a Whittaker model. In such cases, one considers degenerate Whittaker models (also called as the twisted Jacquet module). These models are obtained by replacing the generic character of the maximal unipotent subgroup with a more general character, often arising from the unipotent radical of a non-minimal parabolic subgroup.\\

According to the `philosophy of cusp forms', cuspidal representations serve as the building blocks for understanding the representation theory of $\GL(n,F)$. Any representation of $\GL(n,F)$ can be realized in terms of cuspidal representations of $\GL(n,F)$ and of Levi subgroups of proper parabolic subgroups. Therefore, it is of fundamental importance to understand any structural results about cuspidal representations. In the case of finite $\GL(n,F)$, the explicit construction of cuspidal representations using cohomological methods were described in the foundational work of Deligne and Lusztig. In this paper, we focus on understanding the twisted Jacquet module for cuspidal representations of $\GL(n,F)$ defined over the finite field $F$, with respect to all maximal parabolic subgroups $P$ of $\GL(n,F)$ and all characters $\psi$ of the unipotent radical of $P$. \\


\subsection{Main Problem} In this section, we recall the definition of the twisted Jacquet module, set up some notation and state the main problem we study in this paper. \\

Let $F$ be the finite field with $q$ elements and let $\GL(n,F)$ be the  general linear group. Let $P$ be a parabolic subgroup of $\GL(n,F)$ with Levi decomposition $P=MN$, where $N$ is the unipotent radical of $P$. Let $(\pi,V)$ be an irreducible complex  representation of $\GL(n,F)$ and $\psi$ be a character of $N$. For $m \in M$ and a character $\psi$ of $N$, let $\psi^{m}$ be the character of $N$ defined by $\psi^{m}(n)=\psi(m^{-1}nm)$. Let 
\[V(N,\psi)= \langle \pi(n)v-\psi(n)v \colon n \in N, v \in V \rangle_{\mathbb{C}}.\]  
It is easy to see that $V(N, \psi)$ is invariant under the subgroup $M_\psi$ of $M$ given by 
\[M_{\psi} = \{m \in M \colon {\psi}^{m}(n)=\psi(n) , \forall n \in N \}.\] It follows that we get a representation of $M_{\psi}$ in the space of $\psi$-twisted coinvariants $V_{N,\psi}=V/V(N,\psi)$. 
The representation $(\pi_{N,\psi}, V_{N,\psi})$ of $M_\psi$ is known as the \emph{twisted Jacquet module} of $\pi$ with respect to $(N,\psi)$.  It is an interesting question to determine for which irreducible representations $\pi$ of $\GL(n,F)$ and characters $\psi$ of $N$, the twisted Jacquet module $\pi_{N,\psi}$ is non-zero and to understand its structure as a representation of $M_{\psi}$.\\

We denote  $\GL(n,F)$ as $G_n$. Let $\pi$ be a cuspidal representation of $G_n$, and let $P=P_{k,n}$ be a maximal parabolic subgroup of $G_n$ corresponding to the partition $(k,n-k)$ of $n$. We have $M \cong G_k \times G_{n-k}$ and $N \cong \M_{k \times n-k}(F)$. It is easy to see that the characters of $N$ are of the form $\psi_A(X)=\psi_0(\tr(A^{\top}X))$, where $\psi_0:F \to \bC^\times$ is a non-trivial additive character of $F$ and $A \in \M_{k \times n-k}(F)$. The character $\psi_A$ is said to be of rank $r$ if the matrix $A$ has rank $r$. It is said to be non-degenerate if $A$ has full rank $\min\{k,n-k\}$, and degenerate otherwise. \\

\begin{problem}[$\text{Prob}_{k,n,r,\pi}$] Let $\pi$ be an irreducible cuspidal representation of $G_n$, and let $P=P_{k,n}$ be a maximal parabolic subgroup of $G_n$ associated with the partition $(k,n-k)$ of $n$. Let $P=MN$ be the Levi decomposition of $P$, and let $\psi$ be a rank $r$ character of $N$. Determine the structure of the  twisted Jacquet module $\pi_{N,\psi}$ as a $M_{\psi}$-module.
\end{problem}
\noindent The main result of this paper (see Theorem \ref{main_theorem}) is the solution of the above problem.

\subsection{Some Known Results} The study of $\text{Prob}_{k,n,r,\pi}$ was initiated by D. Prasad in \cite{Dip[1]}, and it is the main inspiration for this work. In  \cite{Dip[1]},  $\text{Prob}_{k,2k,k,\pi}$ was solved for the case $n=2k$ and a non-degenerate character $\psi$ of rank $k$, by first computing the character of $\pi_{N,\psi}$, and showing that it equals the character of a known representation  of $M_\psi \simeq G_k$. The result is that for a 
 cuspidal representation $\pi_\theta$ of $G_{2k}$ associated to a regular character $\theta$ of $F_{2k}^\times$:
\[ \pi_{N, \psi} \simeq \Ind_{F_k^\times}^{G_k} \theta|_{F_k^\times}. \] 
The subsequent works \cite{KumHim[4]}, \cite{KumHim[2]}, \cite{KumHimgl6rk2},\cite{KumHim[3]} solve the problem $\text{Prob}_{k,n,r,\pi}$ for $r \in \{1,2\}$ and some small values of $k,n$. The above problem has a natural generalization $\text{Prob}_{\lambda \vdash n,n,r,\pi}$ to the case when we replace the  maximal parabolic subgroup $P$ of $G_n$ with a standard parabolic subgroup for an arbitrary partition $\lambda \vdash n$ of $n$, and for an appropriate notion of rank of a character of $N$. The  special case when  $\lambda$ has $k$ parts of  equal size $n$, and the character of $N$ is  non-degenerate was solved by O. Gorodetsky and  Z. Hazan  in  \cite{HazGor}, again by first carrying out the  calculation of the character of $\pi_{N, \psi}$, and then matching it to the character of a known representation of $M_\psi \simeq G_n$.
The result is that for a 
 cuspidal representation $\pi_\theta$ of $G_{kn}$ associated to a regular character $\theta$ of $F_{kn}^\times$:
\[ \pi_{N, \psi} \simeq \St_{G_n}^{\otimes\ (k-2)} \otimes \Ind_{F_n^\times}^{G_n} \theta|_{F_n^\times}, \] 
which specializes to D. Prasad's result above for $k=2$. Here $\St_{G_{n}}$ is the Steinberg representation of $G_n$. In another direction, an analogue of the problem $\text{Prob}_{2,4,2, \pi}$ with the finite field $F$ replaced by a finite principal ideal local ring of length $2$ was recently studied in  \cite{parashar2024degeneratewhittakerspacegl4mathfrako2}. The structure of the twisted Jacquet module for representations of many finite classical groups is still unknown. In a recent work \cite{Jon}, the dimension of the module for any irreducible representation of $\GSp(4,q)$ with respect to a non-degenerate character of the unipotent radical of the Siegel parabolic subgroup was determined.\\


\subsection{Strategy of Proof} 
In our solution of $\text{Prob}_{k,n,r,\pi}$ (see Theorem \ref{main_theorem}), we obtain the structure of $\pi_{N, \psi}$ directly, bypassing the need to  carry out the elaborate calculation of  the character of $\pi_{N, \psi}$. In order to explain the idea of proof, let us assume here the condition that $k < n/2$ (which can be relaxed). Assuming this, we construct a fundamental representation $\Pi_{k,n}$ of $P_{k,n}$ which has the property that for any cuspidal representation $\pi=\pi_\theta$ of $G_n$, there is a cuspidal representation $\pi_{\tilde\theta}$ of $G_k$  such that $\pi_\theta|_{P_{k,n}} = \pi_{\tilde\theta} \otimes \Pi_{k,n}$, where $P_{k,n} = \bbsm G_k & \ast\\& G_{n-k} \besm$, and  by $\pi_{\tilde\theta}$, we mean its inflation from $G_k$ to 
$P_{k,n}$. We show that the  problem  reduces to determining the twisted Jacquet module of $\Pi_{k,n}$ with respect to the same character $\psi$ of $N$. We are immediately able to determine $\pi_{N, \psi}$ because,  the very  construction of  $\Pi_{k,n}$ (see Definition \ref{def_Pi}) is by prescribing its twisted Jacquet modules in a way that ensures the desired property $\pi_\theta|_{P_{k,n}} = \pi_{\tilde\theta} \otimes \Pi_{k,n}$ (see Theorem \ref{Pi_k_n_char} and part (1) of Theorem \ref{Pi_universal_property}). Our recursive construction of the representation $\Pi_{k,n}$ of $P_{k,n}$ depends on some new (to the best of our knowledge) ideas that we develop in this work: we extend the Bernstein-Zelevinsky framework for studying representations of the Mirabolic subgroup $\Mir_n$ of $G_n$ to the maximal parabolic subgroups. For each rank $0 \leq r \leq k$,  we define functors $\Phi_r^- \colon {\rm Rep}(P_{k,n}) \to  {\rm Rep}(M_r)$,  where $M_r:=M_{\psi_r}$ for a rank $r$ character $\psi_r$ of $N$, that takes a representation $(\tau,V)$ of $P_{k,n}$ to its twisted Jacquet module $(\tau_{N,\psi_r}, V_{N,\psi_r})$. We show that the composite 
twisted Jacquet functor $\fF^-: \text{Rep}(P_{k,n}) \to \oplus_{r=0}^k \text{Rep}(M_r)$ given by $(\Phi_0^-, \dots, \Phi_k^-)$ is an equivalence of categories.  The inverse functor $\fF^+ \colon \oplus_{r=0}^k \text{Rep}(M_r) \to \text{Rep}(P_{k,n})$ allows us to 
synthesize a representation $\tau$ of $P_{k,n}$ from its twisted Jacquet modules (see part (d) of Theorem \ref{Phi+-_properties}). We show that 
the recursive definition of the representation $\Pi_{k,n}$ (actually a closely related representation $\Pi_{n-k,n}^\dagger$) is a natural generalization from $\Mir_n$ to $P_{k,n}$ of the recursively defined representation $(\Phi^+)^{(n-1)}(1)$ of $\Mir_n$. The property of $(\Phi^+)^{(n-1)}(1)$, that the restriction to $\Mir_n$ of any cuspidal representation of $G_n$ is isomorphic to
$(\Phi^+)^{(n-1)}(1)$, generalizes to a universal property of $\Pi^\dagger_{n-k, n}$ with respect to restrictions to $P_{k,n}$ of cuspidal representations of $G_n$. It may be useful to state here our main result (Theorem \ref{main_theorem}). In order to do that, we need to introduce some more notation. Let $P^*_{r,k}$ denote the image of the parabolic $P_{r,k}$ under the inverse transpose automorphism $g \mapsto g^{-\top}$ of $G_k$. The representation $\Pi^*_{r,k}$ of $P^*_{r,k}$ is defined in terms of $\Pi_{r,k}$ by $\Pi^*_{r,k}(g)=\Pi_{r,k}(g^{-\top})$. There are epimorphisms $p_1:M_r \to P^*_{r,k}$, $p_2:M_r \to P_{r,n-k}$ and $p_3:M_r \to G_r$ which allow us to inflate the representations $\Pi^*_{r,k}$, $\Pi_{r,n-k}$ and $\Ind_{F_r^\times}^{G_r} \theta_r$ (occurring in the theorem) of  $P^*_{r,k}, P_{r,n-k}$ and $G_r$ respectively to $M_r$. 
\begin{theorem*}[Main Theorem] Let $\pi=\pi_\theta$ be a cuspidal representation of $G_n$, and let $P=MN$ be a maximal parabolic subgroup of $G_{n}$ associated with the partition $(k,n-k)$ of $n$. Let $\psi$ be a rank $r$ character of $N$. Let $d=\text{gcd}(r,k,n)$ and let $\theta_r$ be a character of $F_r^\times$ with the property that $\theta_r|_{F_d^\times} = \theta|_{F_d^\times}$.  The twisted Jacquet module $\pi_{N, \psi}$ as a representation of $M_r$ is isomorphic to 
\[ \pi_{N, \psi} \simeq \Pi^*_{r,k} \otimes \Pi_{r,n-k} \otimes \Ind_{F_r^\times}^{G_r} \theta_r. \] 
In particular,
\[ \dim  \pi_{N,\psi} =
q^{\binom{r}{2}} \prod_{i=1}^{r-1}(q^i-1) \cdot \prod_{i=r}^{k-1}(q^i-1) \cdot \prod_{i=r}^{n-k-1}(q^i-1).\]
\end{theorem*}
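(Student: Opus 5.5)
The plan follows the strategy outlined in the introduction. The argument runs through three ingredients: the equivalence of categories $\fF^-$, the recursively defined representation $\Pi_{k,n}$ (or $\Pi^\dagger_{n-k,n}$), and its universal property with respect to restrictions of cuspidal representations.

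\emph{Step 1 (reductions).} Since $\pi_{N,\psi}$ depends on $\psi$ only up to $M$-conjugacy, and $M$ acts on rank-$r$ characters transitively, we may take $\psi=\psi_r$ to be the standard rank $r$ character, with $A=\bbsm 1_r & 0\\ 0&0\besm$. Then $M_r$ consists of pairs $(g_1,g_2)\in G_k\times G_{n-k}$ with $g_1^{\top}$ and $g_2$ preserving the relevant block structure. This gives the epimorphisms $p_1,p_2,p_3$: the maps $p_1,p_2$ forget the upper/lower off-diagonal blocks, and $p_3$ records the common $G_r$-block forced by the stabilizer condition.

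\emph{Step 2 (reduce to $\Pi_{k,n}$).} Next I would use part (1) of Theorem \ref{Pi_universal_property}. It gives $\pi_\theta|_{P_{k,n}}\simeq \pi_{\tilde\theta}\otimes\Pi_{k,n}$ with $\pi_{\tilde\theta}$ inflated from $G_k$; for $k>n/2$ I would use the dual statement with $\Pi^\dagger$ and transpose. Because $\pi_{\tilde\theta}$ is trivial on $N$, the twisted Jacquet functor commutes with tensoring by it. Hence
\[\pi_{N,\psi_r}\simeq \pi_{\tilde\theta}|_{M_r}\otimes \Phi_r^-(\Pi_{k,n}).\]
By Definition \ref{def_Pi}, $\Phi_r^-(\Pi_{k,n})$ is prescribed, presumably of the form $\Pi^*_{r,k}\otimes\Pi_{r,n-k}\otimes(\text{something on }G_r)$, by part (d) of Theorem \ref{Phi+-_properties}. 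The remaining task is to identify the restriction of the cuspidal $\pi_{\tilde\theta}$ of $G_k$ to $M_r$, via $p_1$, to $P^*_{r,k}$.

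\emph{Step 3 (induction and the $G_r$-factor).} Restricting $\pi_{\tilde\theta}$ to $P^*_{r,k}$ and applying the universal property again (transposed) yields $\pi_{\theta'}\otimes\Pi^*_{r,k}$, where $\pi_{\theta'}$ is a cuspidal representation of $G_r$. Tensoring this with the $G_r$-part prescribed in $\Phi^-_r(\Pi_{k,n})$ should produce $\Ind_{F_r^\times}^{G_r}\theta_r$. This uses the standard fact that a cuspidal representation tensored with a Steinberg- or Gelfand–Graev-type factor equals $\Ind_{F_r^\times}^{G_r}\theta_r$, whose isomorphism class depends only on $\theta_r|_{F_d^\times}$, i.e.\ on the central character data.

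The main obstacle is tracking the characters $\theta\to\tilde\theta\to\theta'$ through this recursion. I expect each restriction step to preserve only the restriction of $\theta$ to the subfield $F_{\gcd}^\times$, so that iterating gives $d=\gcd(r,k,n)$. I would first verify that the central characters match: the centre of $G_r$ embedded in $M_r$ acts through $F_r^\times\cap$ the centres seen from $G_k$ and $G_n$. I would then argue that $\Ind_{F_r^\times}^{G_r}\theta_r$ is independent of the extension $\theta_r$, using Frobenius reciprocity and a character count.

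\emph{Step 4 (dimension).} The dimension formula follows by multiplying dimensions. The factor $[G_r:F_r^\times]=q^{\binom r2}\prod_{i=1}^{r-1}(q^i-1)$ comes from $\Ind_{F_r^\times}^{G_r}\theta_r$. The factors $\dim\Pi_{r,k}=\prod_{i=r}^{k-1}(q^i-1)$ and $\dim\Pi_{r,n-k}=\prod_{i=r}^{n-k-1}(q^i-1)$ come from comparing $\dim\pi_\theta=\prod_{i=1}^{n-1}(q^i-1)$ with $\dim\pi_{\tilde\theta}$ in Step 2.
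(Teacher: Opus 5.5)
\textbf{The gap: Steps 2 and 3 use genuine representations where the hypotheses fail.} Your architecture matches the paper's: factor through $\Pi_{k,n}$, read off the rank-$r$ Jacquet module from Definition \ref{def_Pi}, factor the $G_k$-piece again over $P^*_{r,k}$, and absorb $\St_{G_r}$. But you run it with genuine representations, and there it breaks.

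Part (1) of Theorem \ref{Pi_universal_property} is an identity of class functions, $\Theta_\theta(g)=\Theta_{k,n}(g)\,\Theta_{\theta_k}(g_1)$, valid for an \emph{arbitrary} character $\theta_k$. The isomorphism $\pi_\theta|_{P_{k,n}}\simeq\Pi_{k,n}\otimes\pi_{\tilde\theta}$ is part (2). It requires $k\nmid n$ or $\theta|_{F_k^\times}$ regular; this has nothing to do with $k$ versus $n/2$. Your Step 3 likewise needs $r\nmid k$ or $\tilde\theta|_{F_r^\times}$ regular.

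These hypotheses fail in cases the theorem covers, already in Prasad's case $n=2k$, $r=k$ whenever $\theta|_{F_k^\times}$ is not regular. There neither part (2) nor part (3) applies, and the failure is genuine. Take $n=4$, $k=2$ and $\theta|_{F_2^\times}=\chi\circ N_{F_2/F}$. Then $\Theta_{\theta|_{F_2^\times}}=(\chi\circ\det)\cdot(\St_{G_2}-1)$, which is only a virtual character. Evaluating at the elements $\bbsm g_1&\\&g_1\besm$, where $\Theta_{2,4}\neq 0$ by Theorem \ref{Pi_k_n_char}, shows that no genuine representation $\rho$ of $G_2$ satisfies $\pi_\theta|_{P_{2,4}}\simeq\Pi_{2,4}\otimes\rho$.

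\textbf{The fix: work with class functions throughout.} By Proposition \ref{character_Jacquet}, $\Theta_{\theta_k}(g_1)$ is constant on $gN$ and factors out of the average over $N$. This gives $\Theta_{N,\psi}(g)=\Theta_{\theta_k}(g_1)\,\St_{G_r}(h_1)\,\Theta_{r,n-k}(g_2)$ on $M_r$. Next apply \eqref{eq:Pi*_universal_property} with an arbitrary $\tilde\theta$, and part (1) of Lemma \ref{ind=st_times_cuspidal}, which holds for every character of $F_r^\times$, regular or not. Together they convert this into the character of $\Pi^*_{r,k}\otimes\Pi_{r,n-k}\otimes\Ind_{F_r^\times}^{G_r}\tilde\theta$. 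Only this final product needs to be a genuine character.

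Two smaller corrections here:
\begin{itemize}
\item $\Phi_r^-(\Pi_{k,n})=\St_{G_r}\otimes\Pi_{r,n-k}$ has no $\Pi^*_{r,k}$ factor; that factor comes solely from the $G_k$-piece.
\item The $G_r$-factor is Steinberg, not Gelfand--Graev.
\end{itemize}

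\textbf{Independence of the choice of $\theta_r$.} Your Step 3 argument here is wrong. $\Ind_{F_r^\times}^{G_r}\theta_r$ by itself is not determined by $\theta_r|_{F_d^\times}$. For $r=2$, $d=1$, its character at an elliptic element with eigenvalue $\lambda$ is $\theta_r(\lambda)+\theta_r(\lambda^q)$, which is not determined by $\theta_r|_{F^\times}$. Also $F_d^\times$ is not a centre here; for instance $d=r=k$ in Prasad's case, so there is no ``central character'' mechanism.

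What is independent is the tensor product, by a support argument:
\begin{itemize}
\item $\chi_{\Ind}(h_1)$ vanishes unless $h_1$ is semisimple and comes from $F_r^\times$.
\item In that case, Theorem \ref{Pi_k_n_char} together with part (2) of Lemma \ref{jordan_chevalley_for_P_k_n}, applied to $\Theta_{r,n-k}(g_2)$ and to $\Theta^*_{r,k}(g_1)$, shows the product vanishes unless the degree $m$ of the minimal polynomial of $h_1$ divides both $\gcd(r,n-k)$ and $\gcd(r,k)$, hence divides $d$.
\item On the surviving elements, $\chi_{\Ind}(h_1)$ depends only on $\theta_r|_{F_m^\times}$ with $F_m\subset F_d$.
\end{itemize}
This is what lets you replace $\tilde\theta$, which agrees with $\theta$ only on $F_d^\times$, by the given $\theta_r$.

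\textbf{Step 4.} Your dimension count is fine; the paper simply uses Lemma \ref{dim_Pi_k_n}.
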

In the case $r=k=n/2$, our solution 
recovers D. Prasad's solution $\pi_{N, \psi} \simeq \Ind_{F_k^\times}^{G_k} \theta|_{F_k^\times}$, because 
the representations $\Pi^*_{k,k}$ and $\Pi_{k,k}$ are trivial.



\subsection{Some $p$-adic Questions} In the case when $F$ is a $p$-adic field, our work raises the following natural questions:\begin{enumerate}
    \item  Develop the appropriate version of the properties of the functors $\Phi_r^{\pm}$ given in  Theorem \ref{Phi+-_properties}. 
    \item Construct if possible,  the representation $\Pi_{k,n}$ given in Definition \ref{def_Pi}.
    \item Is there a generalization of  the universal property in part (1) of Theorem \ref{Pi_universal_property}, for the distribution character of a (suitably nice) supercuspidal representation of $\GL(n,F)$?
    \item Generalize D. Prasad's result (Theorem \ref{n=2k_nondeg}) to the $p$-adic case. A sub-question here is: for a (suitably nice) supercuspidal representation $\pi$ of $\GL(k,F)$ and the Steinberg representation $\St$ of $\GL(k,F)$, is there a concise description of the representation $\pi \otimes \St$ (analogous to part (2) of   Lemma \ref{ind=st_times_cuspidal})?
    \end{enumerate} 
We hope that understanding these questions can shed light on the  structure of the module $\pi_{N, \psi}$ for $p$-adic groups. 

\subsection{Organization} This paper is organized as follows. In \S \ref{prelim}, we set up the notation and record some preliminary representation theoretic results needed for the sequel. In \S \ref{combin}, we record some combinatorial results that we will need. In \S \ref{sec_equivalence}, we generalize the Bernstein-Zelevinsky framework for Mirabolic subgroups to the maximal parabolic subgroups $P_{k,n}$, and establish the equivalence between the categories ${\rm Rep}(P)$ and $\oplus_r {\rm Rep}(M_r)$. We then construct in \S \ref{sec_Pi_construction}, the   representation $\Pi_{k,n}$ of $P_{k,n}$ (see Definition~\ref{def_Pi}) that satisfies a universal property with respect to restriction of cuspidal representations of $G_n$ to $P_{k,n}$ (see Theorem  \ref{Pi_universal_property}). In  \S \ref{structure}, we use the representation $\Pi_{k,n}$ to obtain  our solution  of $\text{Prob}_{k,n,r,\pi}$ (see   Theorem \ref{main_theorem}).

\section{Preliminaries} \label{prelim}
We begin with a list of some symbols and notations that we frequently use.
\begin{description}
\item [$F$] A finite field of order $q$. The symbol $F_m$ denotes the finite field of order $q^m$. The symbol $F_m^\times$ denotes the multiplicative group of $F_m$.
\item [$\phi(x)=x^q$] is the Frobenius automorphism of $F_m$. The Galois group Gal$(F_m/F)$ is the cyclic group of order $m$ generated by $\phi$.
\item [$N_{F_m/F}(x)=x \phi(x) \cdots \phi^{m-1}(x)$] is the norm of $x \in F_m$ over $F$.
\item [$\tr_{F_m/F}(x)=x+\phi(x) +\dots +\phi^{m-1}(x)$] is the trace of $x \in F_m$ over $F$.
\item[$\widehat{\mathcal G}$] refers to the set of isomorphism classes of irreducible representations of a finite group $\mathcal G$. By a representation, we always mean a finite dimensional complex representation.
\item [${\rm Rep}(\mathcal G)$]  denotes the category whose objects are finite dimensional $\bC$-representations of the finite group $\mathcal G$, and whose  morphisms are the intertwining operators between the representations. 
\item [$\chi_{\rho}$] denotes the character of  a representation $\rho$ of a finite group $\mathcal G$. We implicitly use the fact that two representations of $\mathcal G$ are isomorphic if and only if their characters are equal.
\item [$\langle \chi, \chi'\rangle_\mathcal{G}$] denotes the inner product $(\sum_{g \in \mathcal G} \chi(g) \overline{\chi'(g)} )/|\mathcal G|$ of characters $\chi, \chi'$ of $\mathcal G$.
\item [$\rho|_{H}$] is the restriction of a representation $\rho$ of a group $\mathcal G$ to its subgroup $H$. More generally, given a subset $Y \subset X$ and a function $f$ on $X$, we denote the restriction of $f$ to $Y$ by   $f|_{Y}$.
\item [$\Ind_H^\mathcal{G}$] denotes the induced representation/character. We implicitly use Frobenius reciprocity: $\langle \chi', \Ind_H^\mathcal{G} \chi \rangle_\mathcal{G}=\langle \chi'|_{H}, \chi \rangle_H$.
\item[$\psi_0$]  a fixed non-trivial additive character of $F$.
\item[$\M_{k \times \ell}(F)$] the additive group (or $F$-vector space)  of $k \times \ell$ matrices over $F$.
\item[$\tr(X)$] the trace of a square matrix $X$.
\item[$X^\top$] the transpose  of a  matrix $X$.
\item [$G_n$] is the general linear group $\GL(n,F)$.
\item [$I_n$] is the identity matrix in $G_n$.
\item [$P_{r,n}=$] $\{ \bbsm g_1 & g_3\\& g_2 \besm \colon g_1 \in G_r, g_2 \in G_{n-r}, g_3 \in \M_{r \times n-r}(F)\}$ is the parabolic subgroup of $G_n$ for the partition $(r,n-r)$ of $n$.
\item [$P_{r,n}^*=$] $\{ \bbsm g_1 & \\g_3 & g_2 \besm \colon g_1 \in G_r, g_2 \in G_{n-r}, g_3 \in \M_{n-r \times r}(F)\}$ is the image of $P_{r,n}$ under the inverse transpose isomorphism of $G_n$.
\end{description}
\subsection{Regular characters of $F_n^\times$ and cuspidal representations of $G_n$} \hfill \\
Let $\gamma$ be  a generator of the cyclic group $F_n^\times$, and let $\widehat{F_n^\times}$ be the group of characters of $F_n^\times$. Let  $\imath_n \colon F_n^\times \to \widehat{F_n^\times}$ be the group isomorphism 
\[ \imath_n(\gamma^j) = \theta_j, \qquad \theta_j(\gamma^s)=\exp(\tfrac{2 \pi \sqrt{-1}  j s}{q^n-1}).\]
Let $\mF_n \subset F_n$ be the elements of $F_n^\times$ which are not contained in any proper subfield of $F_n$. Alternatively, $\mF_n\subset F_n^\times$ consists of elements whose Gal$(F_n/F)$-orbit has size $n$, where 
the Galois group $\Gal(F_n/F)$ is the cyclic group of order $n$ generated by the automorphism $\phi(x)=x^q$.  The \emph{regular characters} of $F_n^\times$,  denoted $(\widehat{F_n^\times})_{\text{reg}}$,  consist of $\imath_n(\mF_n)$. If $F_d$ is a subfield of $F_n$, then $\beta=N_{F_n/F_d}(\gamma) = \gamma^{(q^n-1)/(q^d-1)}$ is a generator of the cyclic group $F_d^\times$. Again, we have an isomorphism
\[ F_d^\times \xrightarrow{\imath_d}  \widehat{F_d^\times}, \quad \imath_d(\beta^j)=\varphi_j, \quad \varphi_j(\beta^s)=\exp(\tfrac{2 \pi \sqrt{-1}  j s}{q^d-1}). \]

\begin{lemma} \label{imath_lemma} \hfill \\
\begin{enumerate}
\item[\emph{(1)}] Any  character of $F_d^\times$ can be obtained by restricting a  character of $F_n^\times$ to $F_d^\times$. Moreover, we  have a commutative square:
 \[
\begin{tikzcd}[column sep=6em, row sep=5em]
F_n^\times 
  \arrow[r, equal, "\imath_n"] 
  \arrow[d, twoheadrightarrow, "N_{F_n/F_d}"']
& \widehat{F_n^\times}
  \arrow[d, twoheadrightarrow, "\operatorname{Res}^{F_n^\times}_{F_d^\times}"]
\\
F_d^\times \arrow[r, equal, "\imath_d"']
&
\widehat{F_d^\times}
\end{tikzcd}
\]
\item[\emph{(2)}] If $d$ is a proper divisor of $n$, then any 
character of $F_d^\times$ can be obtained by restricting a  \underline{regular} character of $F_n^\times$ to $F_d^\times$:
\[
\begin{tikzcd}[column sep=6em, row sep=5em]
\mF_n  
  \arrow[r, equal, "\imath_n"] 
  \arrow[d, twoheadrightarrow, "N_{F_n/F_d}"']
&  (\widehat{F_n^\times})_{{\rm reg}}
  \arrow[d, twoheadrightarrow, "\operatorname{Res}^{F_n^\times}_{F_d^\times}"]
\\
F_d^\times \arrow[r, equal, "\imath_d"']
&
\widehat{F_d^\times}
\end{tikzcd}
\]
    
\end{enumerate}

\end{lemma}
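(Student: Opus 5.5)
Part (1) follows from an explicit computation in the chosen generators. Put $N=(q^n-1)/(q^d-1)$, so that $\beta=\gamma^{N}$. Fix $j$ and $s$. The character $\theta_j=\imath_n(\gamma^j)$ restricted to $F_d^\times$ takes the value
\[ \theta_j(\beta^s)=\theta_j(\gamma^{Ns})=\exp\bigl(\tfrac{2\pi\sqrt{-1}\,jNs}{q^n-1}\bigr)=\exp\bigl(\tfrac{2\pi\sqrt{-1}\,js}{q^d-1}\bigr)=\varphi_j(\beta^s). \]
Hence $\operatorname{Res}(\imath_n(\gamma^j))=\varphi_j=\imath_d(\beta^j)$.

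On the other side, the norm map $N_{F_n/F_d}(x)=x^{N}$ is a homomorphism, so $N_{F_n/F_d}(\gamma^j)=\beta^j$. Therefore both paths around the square send $\gamma^j$ to $\varphi_j$, and the square commutes.

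Surjectivity of the restriction map also follows. The map $\gamma^j\mapsto\beta^j$ is onto $F_d^\times$, since $\beta$ generates $F_d^\times$. Both $\imath_n$ and $\imath_d$ are isomorphisms, so commutativity transfers this surjectivity to $\operatorname{Res}$. This proves that every character of $F_d^\times$ is a restriction of a character of $F_n^\times$.

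For part (2), the square is the restriction of the square in (1) to $\mF_n\subset F_n^\times$. Its top arrow is a bijection onto the regular characters by the definition of $(\widehat{F_n^\times})_{\rm reg}$. So it suffices to show that the norm $N_{F_n/F_d}\colon\mF_n\to F_d^\times$ is surjective when $d$ is a proper divisor of $n$. Then commutativity gives the surjectivity of the right-hand arrow.

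This surjectivity is the main step. Fix $b\in F_d^\times$. The fibre $N^{-1}(b)$ is a coset of the kernel of $x\mapsto x^N$, so it has exactly $N$ elements. The non-regular elements of $F_n^\times$ lie in $\bigcup_{m\mid n,\, m<n}F_m^\times$. It therefore suffices to show that $N$ exceeds the number of non-regular elements in the fibre.

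To bound that number, I would use that every proper subfield of $F_n$ lies in some $F_m$ with $m$ a maximal proper divisor of $n$, so $m\le n/2$. For such $m$, the elements $x\in F_m^\times$ with $x^N=b$ number at most $|F_m^\times|=q^m-1$. The number of divisors of $n$ is at most $n$, so the non-regular elements in the fibre number at most $\sum_{m\mid n,\,m<n}(q^m-1)$. A crude estimate bounds this by $q^{n/2+1}$ or so.

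Since $d\le n/2$, we have $N=(q^n-1)/(q^d-1)\ge (q^n-1)/(q^{n/2}-1)=q^{n/2}+1$ when $d=n/2$. That comparison is too tight, so I would refine it. For $x\in F_m$ with $m\mid n$, we have $N_{F_n/F_d}(x)=N_{F_m/F_{\gcd(m,d)}}(x)^{n/\operatorname{lcm}(m,d)}$. This lets me count the non-regular solutions more precisely by inclusion–exclusion over maximal proper divisors. Alternatively, and more cleanly, I would count regular elements directly: the number of elements of $F_n^\times$ of exact degree $n$ with norm $b$ equals $\frac1{q^d-1}$ times the count of all of $\mF_n$, plus a correction given by a Möbius-type sum.

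If the counting becomes messy, there is a fallback. Pick a generator $\gamma$ of $F_n^\times$; it is regular and $N(\gamma)=\beta$. Then seek $\gamma^{j}$ with $j\equiv$ the exponent of $b$ mod $q^d-1$, varying $j$ within the congruence class to avoid the subgroups $F_m^\times$ for $m\mid n$, $m<n$. The subgroup $F_m^\times$ consists of the $\gamma^j$ with $N_m=(q^n-1)/(q^m-1)\mid j$. So the task reduces to finding $j$ in a fixed residue class mod $q^d-1$ that is divisible by none of the $N_m$. Each $N_m$ is a multiple of $N$ and a strict multiple of $q^{n-m}$-ish size, and this elementary count should go through, including the small cases $q=2$, $n=2$ checked by hand.
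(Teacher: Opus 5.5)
Part (1) is correct and is the paper's argument: both paths around the square send $\gamma^j$ to $\varphi_j$, and surjectivity of $\operatorname{Res}$ is carried over from that of the norm. In part (2) your reduction to surjectivity of $N_{F_n/F_d}\colon\mF_n\to F_d^\times$ also matches the paper. However, the step that carries all the content of (2) is never established. That step is: a fibre $N^{-1}(b)$, which has $(q^n-1)/(q^d-1)$ elements, is not contained in $\bigcup_{m\mid n,\,m<n}F_m$. You correctly see that the crude bound $\sum_m(q^m-1)$ fails (e.g.\ at $d=n/2$). But you then only name possible refinements (inclusion--exclusion, a M\"obius-type correction, a residue-class search) without carrying any of them out. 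The M\"obius version merely restates the problem, since positivity depends on bounding the correction term. The fallback also rests on a false claim: $N_m=(q^n-1)/(q^m-1)$ is a multiple of $N=(q^n-1)/(q^d-1)$ only when $m\mid d$.

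The missing ingredient is the paper's preliminary reduction, combined with your own restriction formula. Since $N_{F_n/F_d}=N_{F_e/F_d}\circ N_{F_n/F_e}$ and $N_{F_e/F_d}$ is onto, you may replace $d$ by a maximal proper divisor $e$ of $n$ with $d\mid e$. So assume $n=\ell d$ with $\ell$ prime. The non-regular elements lie in the fields $F_{n/p}$ for primes $p\mid n$:
\begin{itemize}
\item For $p=\ell$, the norm on $F_d$ is $x\mapsto x^\ell$, so at most $\ell$ elements of the fibre lie in $F_d$.
\item For $p\neq\ell$, one has $\gcd(n/p,d)=d/p$ and $\operatorname{lcm}(n/p,d)=n$. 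So the norm on $F_{n/p}$ is $N_{F_{n/p}/F_{d/p}}$, whose fibres have $(q^{\ell d/p}-1)/(q^{d/p}-1)$ elements.
\end{itemize}
It then suffices to check
\[ \ell+\sum_{p\mid d,\ p\neq\ell}\frac{q^{\ell d/p}-1}{q^{d/p}-1}<\frac{q^{\ell d}-1}{q^{d}-1}. \]
This is a routine estimate. Expand both sides as geometric sums and compare, for $1\le i\le\ell-1$, the term $q^{id}$ on the right with $\sum_p q^{id/p}\le\sum_{1\le j\le d/2}q^{ij}$ on the left; this uses that the distinct integers $d/p$ are at most $d/2$. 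The cases $d\in\{2,3\}$ are checked directly.

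The terms with $p\neq\ell$ cannot be dropped. After the same reduction, the paper asserts $\mF_n=F_n\setminus F_d$, which fails whenever $n$ has a prime factor other than $\ell$. For example, take $q=2$, $n=6$, $d=3$: any $\omega\in F_4\setminus F_2$ has $N_{F_{64}/F_8}(\omega)=\omega^9=1$, lies outside $F_8$, and is not regular. So your suspicion that proper subfields other than $F_d$ must be counted is justified, and the count above also closes that gap in the paper's proof.
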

\begin{proof}
(1) The element $\varphi_j=(\imath_d \circ N_{F_n/F_d})(\gamma^j)$ of $\widehat{F_d^\times}$as defined above, is  clearly the restriction of  $\theta_j$ to  $F_d^\times$, i.e., $(\Res_{F_d^\times}^{F_n^\times} \circ \imath_n)(\gamma^j)$.\\

(2) Here $d$ is a proper divisor of $n$. In order to show that  any character of $F_d^\times$ can be obtained by restricting a \emph{regular} character of $F_n^\times$ to $F_d^\times$, it suffices to show the map $N_{F_n/F_d}:\mF_n \to F_d^\times$ is surjective.  Let $e$ be a proper divisor of $n$ such that $d|e$. It suffices to show $N_{F_n/F_e}(\mF_n)=F_e^\times$, because $N_{F_e/F_d}(F_e^\times)=F_d^\times$. Choosing a maximal such $e$, and replacing $d$ with $e$, we may now assume that $\dim_{F_d}(F_n)$ is a prime number $\ell$. In this case $\mF_n = F_n \setminus F_d$. For each $c \in F_d^\times$ there are $1+q+\dots +q^{\ell-1}$ solutions in $F_n$ to the equation $N_{F_n/F_d}(x)=c$ of which there are at most $\ell$ in $F_d$, therefore there are $(q-1)+ (q^2-1)+ \dots +(q^{\ell-1} -1) \geq  q-1>0$ solutions in $\mF_n$.
\end{proof}

We recall that the cuspidal representations of $G_n$ arising from regular characters $\theta=\imath_n(x), \theta'=\imath_n(x')$ of $F_n^\times$  are isomorphic if and only if $x, x' \in \mF_n$ are in the same Gal$(F_n/F)$ orbit. The Gal$(F_n/F)$-orbits on $\mF_n$ correspond (via the minimal polynomial) to the set of roots of monic irreducible polynomials of degree $n$ in $F[X]$. It is well known that the number of such polynomials (and hence, also the number of isomorphism classes of cuspidal representations of $G_n$) is $( -\delta_{1,n} + \sum_{d|n} \mu(d) q^{n/d} )/n$,
where  $\mu(n)$  is the M\"{o}bius  function, and $\delta_{ij}$ is the Kronecker delta. \\

For $a \in F_n$, let $m_a$ be the $F$-linear endomorphism of $F_n$ given by $x \mapsto a x$. Clearly $a \mapsto m_a$ is a monomorphism from $F_n^\times \to G_n$ (once a basis for $F_n/F$ is chosen). In this way, we view $F_n^\times$ as a subgroup of $G_n$. An element $g \in G_n$ is said to \emph{come from $F_n^\times$} if there exists $a\in F_n$ and a basis of $F_n/F$ such that $g$ is the  matrix of $m_a$ with respect to this basis. Equivalently, $g \in G_n$ comes from $F_n^\times$ if and only if its minimal polynomial $p(X) \in F[X]$ is an irreducible polynomial of degree $m|n$.
\begin{definition} \label{Theta_definition}
    For a character $\theta$ of $F_n^\times$, let $\Theta_\theta$ be the class function  on $G_n$ defined as follows. Let $g \in G_n$ with  $g=s \cdot u$ being the Jordan decomposition  of $g$ into its semisimple part  $s$ and unipotent part $u$. In case  $s$ comes from $F_n^\times$, let  $\lambda \in F_m$ be a root of  the irreducible minimal polynomial $p(X) \in F[X]$ of $s$, and let $t(g)=\dim_{F_n} \ker(g - \lambda I_n)$. We define:
\begin{equation} \label{eq:Theta_definition}
   \Theta_{\theta}(g)= \begin{cases} 
   0 &\text{ if $s$ does not come from $F_n^\times$} \\
   (-1)^{n-1}\left( \sum_{i=0}^{m-1} \theta(\phi^i(\lambda))\right) \,  \prod_{j=1}^{t(g)-1} (1-q^{jm}) &\text{ if $s$ comes from $F_n^\times$.} \\\end{cases} 
\end{equation}
\end{definition}
We recall the character of the cuspidal representation $\pi_\theta$ of $G_n$ associated with the regular character $\theta$ of $F_n^\times$ (see \cite{Gel[1], Dip[1]}):
\begin{thm} [J.A. Green]  
\label{character value of cuspidal representation (Dipendra)}
Let $\theta$ be a regular character of $F^{\times}_{n}$. Let $\pi=\pi_{\theta}$ be the cuspidal representation of $G_n$ associated to $\theta$. The character of $\pi_\theta$ equals the class function $\Theta_{\theta}$ given in Definition \ref{Theta_definition} above.

\end{thm}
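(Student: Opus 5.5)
The statement is classical, and the plan is to derive it from Deligne--Lusztig theory rather than from Green's original combinatorial construction. Let $T \subset G_n$ be the anisotropic (Coxeter) maximal torus with $T^F = F_n^\times$, embedded as in the discussion preceding Definition \ref{Theta_definition}. For $\theta$ regular, $\theta$ is in general position: its stabilizer in $W(T)^F \cong \Gal(F_n/F)$ is trivial. It is standard that $\pm R_T^\theta$ is then irreducible, and it is cuspidal because $T$ lies in no proper $F$-stable Levi subgroup. The correct sign is $\varepsilon_{G}\varepsilon_T = (-1)^{n-1}$, since $G_n$ has $F_q$-rank $n$ and $T$ has $F_q$-rank $1$. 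So the first step is to identify $\pi_\theta = (-1)^{n-1} R_T^\theta$; this is the convention matching the parametrization of cuspidals by Galois orbits of regular characters recalled above.

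The second step is to apply the Deligne--Lusztig character formula to $g = su$:
\[
R_T^\theta(su) = \frac{1}{|C(s)^F|} \sum_{x \in G_n,\; x^{-1}sx \in T^F} \theta(x^{-1}sx)\, Q^{C(s)}_{xTx^{-1}}(u).
\]
(Centralizers are connected in $\GL_n$.) If $s$ is not conjugate into $T^F = F_n^\times$, the sum is empty, giving the first case of \eqref{eq:Theta_definition}. Otherwise, let the minimal polynomial of $s$ be irreducible of degree $m \mid n$ with root $\lambda$. Then $C(s)^F \cong \GL(n/m, F_m)$, and the elements of $T^F$ conjugate to $s$ are exactly the $m$ Galois conjugates $\phi^i(\lambda)$, viewed in $F_n^\times$. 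For each of these, the set of $x$ with $x^{-1}sx = \phi^i(\lambda)$ is a coset of $C(s)^F$. Moreover, every torus $xTx^{-1}$ occurring is an anisotropic maximal torus of $C(s) \cong \mathrm{Res}_{F_m/F}\GL_{n/m}$, and these are all $C(s)^F$-conjugate. Hence the formula collapses to
\[
\Big(\sum_{i=0}^{m-1}\theta(\phi^i(\lambda))\Big)\, Q^{\GL(n/m,F_m)}_{T_{\rm cox}}(u).
\]

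The final step, and the main obstacle, is evaluating the Green function of the Coxeter torus of $\GL(t', F_m)$, where $t' = n/m$, at a unipotent $u$. The claim to prove is
\[
Q_{T_{\rm cox}}(u) = \prod_{j=1}^{\ell(u)-1}(1 - q^{jm}),
\]
where $\ell(u)$ is the number of Jordan blocks of $u$ over $F_m$. This is exactly $t(g) = \dim_{F_m}\ker(g - \lambda I)$. My first attempt would use the Springer/Green description: $Q_{T_w}(u) = \sum_\mu \chi^\mu(w)\, \tilde K_{\mu,\nu}(q^m)$, with $w$ an $t'$-cycle. By Murnaghan--Nakayama, $\chi^\mu(w)$ vanishes unless $\mu$ is a hook $(t'-a, 1^a)$, where it equals $(-1)^a$. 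The hook Green polynomials then sum to the stated product. A cleaner alternative is to use $R_T^1$-type identities, namely $\sum_a (-1)^a \Ind$ from parabolics of Steinberg pieces, and count fixed points of $u$ on partial flags. In either form this reduces to a $q$-binomial identity, which I would check by induction on $\ell(u)$. Multiplying by the sign $(-1)^{n-1}$ from the first step then yields $\Theta_\theta$.
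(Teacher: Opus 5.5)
The paper does not prove this statement. It is quoted as Green's theorem, via the works of Gelfand and Prasad cited just before it, and is used afterwards only through the explicit formula \eqref{eq:Theta_definition}. Your Deligne--Lusztig derivation is therefore a genuinely different route, and its outline is correct.

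With $\pi_\theta=\varepsilon_G\varepsilon_T R_T^\theta=(-1)^{n-1}R_T^\theta$, the character formula collapses as you say:
\begin{itemize}
\item The elements of $T^F$ that are conjugate to $s$ are the $m$ roots $\phi^i(\lambda)$, which produces the Galois sum.
\item Any two admissible $x$ differ by an element of $C(s)^F$ on the left and of $N(T)^F$ on the right (the latter realising $\mathrm{Gal}(F_n/F)$). Hence all the tori $xTx^{-1}$ are $C(s)^F$-conjugate Coxeter tori of $C(s)^F\simeq\GL(n/m,F_m)$, and a single Green function appears.
\item The number of Jordan blocks of $u$ in $\GL(n/m,F_m)$ is $t(g)$, because $su-\lambda$ is invertible off the $\lambda$-eigenspace of $s$.
\end{itemize}
What your route buys is an explanation of the shape of $\Theta_\theta$: the orbit sum comes from the torus, the product is a Coxeter Green function of the centralizer, and the sign is $\varepsilon_G\varepsilon_T$. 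The price is importing Deligne--Lusztig theory, which the paper avoids by taking the character values as given.

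The one step you leave open is $Q_{T_{\mathrm{cox}}}(u)=(q';q')_{\ell(u)-1}$ with $q'=q^m$. This is Green's classical evaluation $Q^\nu_{(t')}(q)=\prod_{i=1}^{\ell(\nu)-1}(1-q^i)$ and can simply be cited.

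If you prove it via route (a), the missing input is
\[
\tilde K_{(t'-a,1^a),\nu}(t)=t^{\binom{a+1}{2}}\stirling{\ell(\nu)-1}{a}_t,
\qquad \tilde K_{\mu\nu}(t)=t^{n(\nu)}K_{\mu\nu}(t^{-1}).
\]
After that, \eqref{eq:qbinthm} at $T=q'$ gives the product.

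In route (b), the alternating sum as you wrote it would fail: it must carry multiplicities,
\[
R^1_{T_{\mathrm{cox}}}=\sum_{b=0}^{t'-1}(-1)^b(t'-b)\,R^{\GL(t',F_m)}_{\GL(t'-b,F_m)\times\GL(b,F_m)}\bigl(1\otimes\St_{\GL(b,F_m)}\bigr).
\]
This comes from the identity in the representation ring of $S_{t'}$
\[
\sum_a(-1)^a\chi^{(t'-a,1^a)}=\sum_b(-1)^b(t'-b)\Ind_{S_{t'-b}\times S_b}^{S_{t'}}(1\otimes\mathrm{sgn}).
\]
Without the factor $(t'-b)$ you already get $-\St$ instead of $1-\St$ for $\GL_2$. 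To evaluate at $u$, count the subspaces containing $(u-1)F_m^{t'}$; this gives
\[
\sum_b(-1)^b(t'-b){q'}^{\binom b2}\stirling{\ell}{b}_{q'}=-\tfrac{d}{dT}(T;q')_\ell\big|_{T=1}=(q';q')_{\ell-1},\qquad \ell=\ell(u),
\]
using $(1;q')_\ell=0$. This is just the derivative of \eqref{eq:qbinthm} at $T=1$, so no induction on $\ell(u)$ is needed.
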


\subsection{Characters of $N$ and the associated twisted Jacquet modules} \label{sec_TJM_def} \hfill \\
We  elaborate some definitions from Section \ref{introduction}. Let $P=P_{k,n}$ be the  parabolic subgroup of $G_n$ with respect to the partition $(k,n-k)$ of $n$:
 \[P=P_{k,n}=\{\bbsm g_1 & Y \\ & g_2 \besm : g_1 \in G_k, \,g_2 \in G_{n-k}, \,Y \in \M_{k \times n-k}(F) \}. \]
Let $P=MN$ be the  Levi decomposition of $P$, where the unipotent radical $N$ and the Levi subgroup $M$ are:
\[ N=\{ g \in P \colon g_1=I_k, g_2=I_{n-k}\}, \quad M=\{g \in P \colon Y=0\} \simeq G_k \times G_{n-k}.\]
 Since $N \simeq \M_{k \times n-k}(F)$, the   group $\widehat{N}$ of characters of $N$ is 
 \[ \widehat N=\{\psi_A \colon A \in \M_{k \times n-k}(F)\}, \quad \text{where    }  \psi_A(g)=\psi_0(\tr(A^\top Y)).\] We define the rank of the  character $\psi_A$ to be the rank of $A$. We say $\psi_A$ is  non-degenerate if $A$ has full rank, and we say $\psi_A$ is a degenerate character if $A$ does not have full rank.   We recall from Section \ref{introduction}, the subgroup $M_\psi$ of $M$ defined by 
\[M_{\psi} = \{m \in M \colon {\psi}^{m}(n)=\psi(n) , \forall n \in N \},\]
where $\psi^m(n):=\psi(m^{-1}nm)$. We also recall the $M_\psi$-modules
\begin{align*}
    V^{N, \psi} & =\{v \in V \colon \tau(n)v=\psi(n)v, \forall n \in N \}\\
    V(N,\psi) &= \langle \tau(n)v-\psi(n)v \colon n\in N, v\in V \rangle_{\mathbb{C}} \\
    V_{N,\psi}&=V/V(N,\psi). 
\end{align*}
It is easy to see that the linear map Pr$_N:V \to V$ defined by \[ \text{Pr}_N = \frac{1}{|N|} \sum_{n\in N} \tau(n) \overline{\psi(n)},\] is a projection of $V$ onto $V^{N, \psi}$ with kernel $V(N, \psi)$. Therefore, $V=V^{N,\psi} \oplus V(N, \psi)$ as $M_\psi$-modules, and hence $V^{N,\psi} \simeq V_{N,\psi}$. In this work, we will not use the quotient module $V/V(N,\psi)$,  and work exclusively with the submodule $V^{N,\psi}$. By a slight abuse of notation, we will use the symbol $V_{N,\psi}$ to denote the $M_\psi$-module  $V^{N,\psi}$:
\begin{definition} \label{V_N_psi_def}
    Let  $(\tau,V)$ be a  representation of $P\subset G_n$, and let  $\psi$ be a character of $N$. Let
    $M_\psi$ be the subgroup of $M$ defined above, and let $V_{N,\psi}$ be $M_\psi$-module which is the $\psi$-isotypic component of $\tau|_{N}$:
\[V_{N,\psi}= \{v \in V \colon \tau(n)v=\psi(n)v, \forall n \in N \}.\]
 By the \emph{twisted Jacquet module} of $\tau$ with respect to $(N,\psi)$, denoted $(\tau_{N,\psi}, V_{N,\psi})$, is the $M_\psi$-module $V_{N,\psi}$.
\end{definition}

 Let $\chi_{\tau_{N, \psi}}$ denote the character of $\tau_{N, \psi}$. The character $\chi_{\tau_{N, \psi}}$ can be calculated in terms of the character $\chi_\tau$  by the following well known result 
 We include a proof here, for the sake of completeness:
\begin{proposition} \label{character_Jacquet}
Let $(\tau,V)$ be a representation of a parabolic subgroup $P \subset G_n$,  and let $\chi_\tau$ be the character of $\tau$. For $m \in M_\psi$, we have
\[\chi_{\tau_{N, \psi}}(m) = \frac{1}{|N|}\sum_{n \in N}\chi_{\tau}(mn)\overline{\psi(n)}.\]
\end{proposition}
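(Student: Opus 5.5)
The plan is to express the character of $\tau_{N,\psi}$ as the trace of the restriction of $\tau(m)$ to $V_{N,\psi}$, and to compute that trace using the projection $\text{Pr}_N$ introduced just before Definition \ref{V_N_psi_def}.

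\emph{Step 1: commutation.} Recall that
\[ \text{Pr}_N=\frac{1}{|N|}\sum_{n\in N}\psi(n)^{-1}\tau(n) \]
is an idempotent with image $V_{N,\psi}$ and kernel $V(N,\psi)$. For $m\in M_\psi$ we check that $\tau(m)$ commutes with $\text{Pr}_N$. Conjugating by $\tau(m)$ gives
\[ \tau(m)\,\text{Pr}_N\,\tau(m)^{-1}=\frac{1}{|N|}\sum_{n}\psi(n)^{-1}\tau(mnm^{-1}). \]
Reindex with $n'=mnm^{-1}$, which is legitimate because $M$ normalizes $N$. Then $\psi(n)=\psi(m^{-1}n'm)=\psi^m(n')=\psi(n')$, since $m\in M_\psi$. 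Hence the conjugated operator equals $\text{Pr}_N$.

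\emph{Step 2: trace.} Since $\tau(m)$ commutes with $\text{Pr}_N$, it preserves both summands of $V=V_{N,\psi}\oplus V(N,\psi)$. Therefore
\[ \operatorname{tr}\bigl(\tau(m)|_{V_{N,\psi}}\bigr)=\operatorname{tr}\bigl(\tau(m)\,\text{Pr}_N\bigr). \]
To justify this, choose a basis adapted to the decomposition: in it, $\text{Pr}_N$ is the identity on the first block and zero on the second.

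\emph{Step 3: expand.} By linearity of the trace,
\[ \operatorname{tr}\bigl(\tau(m)\text{Pr}_N\bigr)=\frac{1}{|N|}\sum_n\overline{\psi(n)}\,\operatorname{tr}\tau(mn)=\frac{1}{|N|}\sum_{n\in N}\chi_\tau(mn)\overline{\psi(n)}. \]
Here $\psi(n)^{-1}=\overline{\psi(n)}$ because $\psi$ takes values in roots of unity. This is exactly the claimed formula.

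There is no real obstacle. The only points needing care are the invariance check in Step 1, where the hypothesis $m\in M_\psi$ is used, and the fact that $\text{Pr}_N$ is a projection onto $V_{N,\psi}$, which was already recorded before Definition \ref{V_N_psi_def}.
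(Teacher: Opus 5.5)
Your proposal is correct and follows the paper's proof: you identify $\chi_{\tau_{N,\psi}}(m)$ with $\tr(\tau(m)\,\text{Pr}_N)$ and then expand by linearity of the trace. Your Step 1, showing that $\tau(m)$ commutes with $\text{Pr}_N$ for $m\in M_\psi$, spells out a justification the paper leaves implicit; strictly, it is enough that $\tau(m)$ preserves $V_{N,\psi}$, since $\tau(m)\text{Pr}_N$ maps $V$ into $V_{N,\psi}$ and agrees with $\tau(m)$ on it.
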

\begin{proof}
The character  $\chi_{\tau_{N, \psi}}(m)$ is the trace of the restriction of $\tau(m)$ to ${V_{N,\psi}}$, which is also equal to the trace of $\tau(m) \text{Pr}_N$.   The latter quantity equals
\[ \tr\left( \tfrac{1}{|N|} \sum_{n\in N} \tau(m)\tau(n) \overline{\psi(n)}\right)=\tfrac{1}{|N|} \sum_{n\in N}  \chi_{\tau}(mn) \overline{\psi(n)}.\]

\end{proof}
For  $g=\bbsm g_1 & \\& g_2 \besm \in M$, the action of $g$ on $N \simeq \M_{k \times n-k}(F)$ is given by $g \cdot X= g_1 X g_2^{-1}$.
The dual action of $M$ on $\widehat N$ is  given by:  
\[g \cdot \psi_A(X)=\psi_A(g^{-1} \cdot X)=\psi_0(\tr(g_2 A^\top g_1^{-1} X))=\psi_{ g_1^{-\top} A g_2^{\top}}(X),\] 
i.e.,  $g \cdot \psi_A=\psi_{ g_1^{-\top} A g_2^{\top}}$.
Clearly, the $M$-orbits on $\widehat N$ are represented by $\psi_r= \psi_{A_r}$ where
\[A_r=\begin{bmatrix}I_r & 0 \\
0 & 0 \end{bmatrix} \in  \M_{k \times n-k}(F), \quad 0 \leq r \leq \min\{k,n-k\}. \]
For $\psi_r$,  the subgroup $M_r=M_{\psi_r}$ of $M$ consists of those $\bbsm g_1& \\ &  g_2 \besm \in M$ such that  $g_1^{-\top} A_r g_2^{\top}=A_r$. Writing $g_1^{-\top} = \bbsm g_{11} & g_{12} \\ g_{13} & g_{14} \besm$  and 
$g_2^{-\top}=\bbsm g_{21} & g_{22} \\ g_{23} & g_{24} \besm$ as block matrices corresponding to the partition $(r,k-r)$ of $k$, and $(r, n-k-r)$ of $n-k$,
we need 
\[ \bbsm g_{11} & 0  \\ g_{13} & 0 \besm =\bbsm g_{11} & g_{12} \\ g_{13} & g_{14} \besm \bbsm 
I_r & 0 \\
0 & 0
\besm = \bbsm 
I_r & 0 \\
0 & 0
\besm
\bbsm g_{21} & g_{22} \\ g_{23} & g_{24} \besm=
\bbsm g_{21} & g_{22} \\ 0 & 0 \besm.
\]
This is  equivalent to $g_{13}=0, g_{22}=0$, and $g_{11}=g_{21}$.  We summarize this:
\begin{definition} \label{def_Mpsi}
    For the parabolic subgroup $P = P_{k,n}$ of $G_n$ with Levi decomposition $P = MN$, and the character $\psi=\psi_r$ of $N$ as above, we denote the group $M_\psi$ as just $M_r$. For $(\tau,V) \in {\rm Rep}(G_n)$ or more generally for $(\tau,V) \in {\rm Rep}(P)$, we denote the twisted Jacquet module $(\tau_{N, \psi}, V_{N,\psi})$ as just $(\tau_r, V_r)$. The group $M_r$ is described by 
    \begin{multline} \label{eq:Mpsi}
    M_r=\Bigg\{ \bbsm h_1 &  0 & 0&0\\
    h_5 & h_4&0&0\\
    0&0&h_1& h_3\\0&0&0&h_2 \besm \colon h_1\in G_r, h_4\in G_{k-r}, h_2\in G_{n-k-r},\\  
    h_5\in \M_{k-r \times r}(F), \, h_3\in \M_{ r \times n-k-r}(F)
    \Bigg\}.
\end{multline}

\end{definition}

\subsection{The twisted  Jacquet module $\pi_{N, \psi_k}$ in the case $n=2k$} \hfill \\
We first record some facts that we need about the characters of the Steinberg representation of $G_n$ (\cite{Carter_finite}) and the representation  
$\Ind_{F_n^\times}^{G_n} \theta$ where $\theta$ is a character of $F_n^\times$. The following result is from  
O. Gorodetsky and Z. Hazan \cite{HazGor}.
\begin{lemma} \cite[eq 6.2]{HazGor}\label{St_char}
    Let $\St_{G_n}$ denote the Steinberg representation of $G_n$. We use the same symbol $\St_{G_n}$ for the character of $\St_{G_n}$. If $g$ is non-semisimple, then $\St_{G_n}(g)=0$. If $g$ is semisimple and comes from $F_n^{\times}$ with minimal polynomial $p(X) \in F[X]$ of degree $m|n$, we have 
    \[\St_{G_n}(g)= (-1)^{n-n'} {q'}^{\tbinom{n'}{2}}, \quad \text{where 
 } n'=n/m, \; q' = q^m.\]
\end{lemma}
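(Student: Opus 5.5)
The plan is to deduce the lemma from the general formula for the Steinberg character of a finite reductive group (Steinberg; see \cite{Carter_finite}). For a connected reductive group $\mathbf G$ over $F$ and $g \in \mathbf G(F)$, that formula reads
\[
\St_{\mathbf G}(g)=\begin{cases}\varepsilon_{\mathbf G}\,\varepsilon_{C_{\mathbf G}(g)^\circ}\,|C_{\mathbf G}(g)^\circ(F)|_p & \text{if $g$ is semisimple},\\ 0 & \text{otherwise.}\end{cases}
\]
Here $\varepsilon_{\mathbf H}=(-1)^{\text{($F$-split rank of $\mathbf H$)}}$, and $|\cdot|_p$ is the $p$-part of the order, with $p$ the characteristic of $F$. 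The vanishing on non-semisimple elements is exactly the first assertion of the lemma. So the work is to identify the centralizer of a semisimple $g$ that comes from $F_n^\times$, and then to compute its $p$-part and its sign.

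The first step is the centralizer. Suppose $g$ is semisimple and its minimal polynomial $p(X)$ is irreducible of degree $m$ with $m \mid n$. Then $F^n$ is a module over the field $F[X]/(p(X)) \simeq F_m$, on which $g$ acts as multiplication by a root $\lambda$. Counting dimensions, $F^n$ is an $F_m$-vector space of dimension $n'=n/m$. An element of $G_n$ commutes with $g$ exactly when it is $F_m$-linear, so
\[
C_{G_n}(g)\simeq \GL(n',F_m)=\GL(n',q').
\]
At the level of algebraic groups this centralizer is the Weil restriction $\Res_{F_m/F}\GL_{n'}$, which is connected. Since $\GL_n$ has simply connected derived group, centralizers of semisimple elements are connected anyway.

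The second step is the $p$-part. From
\[
|\GL(n',q')|={q'}^{\binom{n'}{2}}\prod_{i=1}^{n'}({q'}^i-1)
\]
and the fact that each factor ${q'}^i-1$ is prime to $p$, the $p$-part is ${q'}^{\binom{n'}{2}}$.

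The third step is the sign. The $F$-split rank of $\GL_n$ is $n$, coming from the diagonal torus. For $\Res_{F_m/F}\GL_{n'}$, a maximal $F$-split torus is the image of the split diagonal torus $\mathbb G_m^{n'}$ sitting inside $\Res_{F_m/F}\mathbb G_m^{n'}$, so its $F$-split rank is $n'$. This gives $\varepsilon_{G}\varepsilon_{C(g)}=(-1)^{n+n'}=(-1)^{n-n'}$, which is the claimed value.

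The main obstacle is the sign convention, specifically making sure that "relative rank" means the $F$-split rank of the Weil restriction and not its absolute rank, which would be $n$ and give the wrong sign. As a sanity check I will use the regular elliptic case $m=n$, $n'=1$. There the lemma predicts $\St(g)=(-1)^{n-1}$, and this agrees with the classical value of the Steinberg character on a generator of an anisotropic torus $F_n^\times$, namely the Deligne--Lusztig sign $\varepsilon_G\varepsilon_T$. Alternatively, one could verify the sign by restricting to a Levi subgroup and using the Curtis alternating-sum formula. The rest of the argument is bookkeeping.
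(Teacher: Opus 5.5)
Your proof is correct and follows the same route the paper relies on. The paper gives no proof of its own: it quotes the formula from Gorodetsky--Hazan \cite[eq.~6.2]{HazGor}, which is Steinberg's general formula $\St(g)=\varepsilon_{G}\,\varepsilon_{C_G(g)^\circ}\,|C_G(g)^\circ(F)|_p$ (Carter) specialized, exactly as you do, to the centralizer $C_{G_n}(g)\simeq\GL(n',q')$, whose $p$-part is ${q'}^{\binom{n'}{2}}$ and whose $F$-split rank is $n'$ (against $n$ for $G_n$).
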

The following result is from \cite{Dip[1]}:
\begin{lemma} \cite[Lemma 1]{Dip[1]} \label{ind_char}
For a character $\theta$ of $F_k^\times$, the character $\chi_{\Ind_{F_k^\times}^{G_k} \theta}$ of the induced representation
$\Ind^{G_k}_{F_k^\times} \theta$ equals $0$ unless $g$ is semisimple and comes from $F_k^{\times}$. In case $g$ comes from $F_k^{\times}$ and $F[\lambda]=F_m$ for an eigenvalue $\lambda$ of $g$, we have
\[ \chi_{\Ind_{F_k^\times}^{G_k} \theta}(g) = {q'}^{\tbinom{k'}{2}}\left( \sum_{i=0}^{m-1} \theta(\phi^i(\lambda))\right)  \prod_{j=1}^{k'-1}({q'}^j-1)
\quad \text{ where } k'=k/m, \; q' = q^m.\]
 \end{lemma}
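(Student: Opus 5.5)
The plan is to compute the character of $\Ind_{F_k^\times}^{G_k}\theta$ directly from the Frobenius formula for induced characters, using that $H:=F_k^\times$ is abelian. For any $g\in G_k$,
\[ \chi_{\Ind_H^{G_k}\theta}(g)=\frac{1}{|H|}\sum_{\substack{x\in G_k\\ xgx^{-1}\in H}}\theta(xgx^{-1}). \]
Let $\mathcal C(g)$ be the $G_k$-conjugacy class of $g$. For each $a\in H\cap\mathcal C(g)$, the set of $x$ with $xgx^{-1}=a$ is a coset of the centralizer $C_{G_k}(g)$. Regrouping the sum accordingly gives
\[ \chi(g)=\frac{|C_{G_k}(g)|}{|H|}\sum_{a\in H\cap \mathcal C(g)}\theta(a). \]

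\emph{Vanishing.} If $g$ is not conjugate to any element of $H$, the sum is empty and $\chi(g)=0$. Every element $m_a$ of $F_k^\times\subset G_k$ is semisimple, because its minimal polynomial is the minimal polynomial of $a$ over $F$, which is irreducible and hence separable. So $\chi(g)=0$ unless $g$ is semisimple and comes from $F_k^\times$.

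\emph{The sum over $H\cap\mathcal C(g)$.} Now suppose $g$ comes from $F_k^\times$, with irreducible minimal polynomial $p$ of degree $m\mid k$ and root $\lambda$, so $F[\lambda]=F_m\subset F_k$.
\begin{itemize}
\item Since $g$ is semisimple with minimal polynomial $p$, its characteristic polynomial is $p^{k'}$ with $k'=k/m$.
\item An element $a\in F_k^\times$ is conjugate to $g$ in $G_k$ exactly when $m_a$ has the same minimal polynomial $p$. This holds because two semisimple matrices with the same irreducible minimal polynomial are conjugate: both are $F_m$-vector space structures on $F^k$ of the same dimension $k'$.
\item The elements $a\in F_k$ with minimal polynomial $p$ are the roots of $p$ in $F_k$. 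These are exactly the $m$ distinct Galois conjugates $\phi^i(\lambda)$, $0\le i\le m-1$.
\end{itemize}
Hence $\sum_{a\in H\cap\mathcal C(g)}\theta(a)=\sum_{i=0}^{m-1}\theta(\phi^i(\lambda))$.

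\emph{The prefactor.} The element $g$ makes $F^k$ into an $F_m$-vector space of dimension $k'$ (with $\lambda$ acting as $g$). The centralizer of $g$ is the group of $F_m$-linear automorphisms, namely $\GL(k',q')$ with $q'=q^m$. Therefore
\[ |C_{G_k}(g)|={q'}^{\binom{k'}{2}}\prod_{j=1}^{k'}({q'}^j-1). \]
Also $|H|=q^k-1={q'}^{k'}-1$, which cancels the $j=k'$ factor. This leaves ${q'}^{\binom{k'}{2}}\prod_{j=1}^{k'-1}({q'}^j-1)$, which is the stated formula.

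The only step needing care is the identification of $H\cap\mathcal C(g)$ with the set of Galois conjugates of $\lambda$. It rests on the conjugacy criterion for semisimple elements with irreducible minimal polynomial, and the rest is routine counting.
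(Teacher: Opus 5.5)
Your proof is correct and complete. You regroup the Frobenius formula over $H\cap\mathcal C(g)$ and identify that set with $\{m_{\phi^i(\lambda)}\}_{0\le i\le m-1}$ using the $F_m$-vector space structure on $F^k$. Dividing $|C_{G_k}(g)|=|\GL_{k'}(F_m)|$ by $|F_k^\times|={q'}^{k'}-1$ then gives exactly the stated formula. The paper does not prove this lemma; it quotes it from D.~Prasad's work, and your computation is the standard direct derivation of that result.
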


\begin{lemma} \label{ind=st_times_cuspidal} 
Let $\theta$ be a character of $F_n^\times$, and let $\chi_{\Ind_{F_n^\times}^{G_n} \theta}$ denote the character of the  representation $\Ind_{F_n^\times}^{G_n} \theta$. Let  $\St_{G_n}$ be the Steinberg representation   of $G_n$. Let $\Theta_\theta$ be the class function as in Definition \ref{Theta_definition}. \begin{enumerate}
\item[\emph{(1)}] $\chi_{\Ind_{F_n^\times}^{G_n} \theta}(g) = \St_{G_n}(g) \cdot \Theta_\theta(g)$.
\item[\emph{(2)}]  If $\theta$ is a regular character of $F_n^\times$ and $\pi_\theta$, the associated cuspidal representation of $G_n$, then
\[ \Ind_{F_n^\times}^{G_n} \theta  \simeq \St_{G_n} \otimes\,  \pi_\theta.\]
\end{enumerate}
\end{lemma}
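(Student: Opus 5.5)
Part (1) is a pointwise comparison of class functions on $G_n$, using Lemma \ref{St_char}, Lemma \ref{ind_char} (with $k=n$) and Definition \ref{Theta_definition}. Part (2) then follows from (1) and Theorem \ref{character value of cuspidal representation (Dipendra)}.

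For (1), fix $g\in G_n$ with Jordan decomposition $g=su$. The cases are as follows.

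\emph{Case 1: $g$ is not semisimple.} Then $\St_{G_n}(g)=0$ by Lemma \ref{St_char}, and $\chi_{\Ind_{F_n^\times}^{G_n}\theta}(g)=0$ by Lemma \ref{ind_char}. So both sides vanish.

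\emph{Case 2: $g$ is semisimple but does not come from $F_n^\times$.} Then $\Theta_\theta(g)=0$ by definition, and the induced character also vanishes by Lemma \ref{ind_char}.

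\emph{Case 3: $g=s$ is semisimple and comes from $F_n^\times$.} Let the minimal polynomial have degree $m$, and put $n'=n/m$, $q'=q^m$. Since $g$ is semisimple with irreducible minimal polynomial $p(X)$ of degree $m$, it is conjugate to $m_\lambda$ acting on $F_n$. The $\lambda$-eigenspace of $g$ over $F_m\otimes_F\bar F$ has dimension $n/m$, so I expect $t(g)=n'$ in Definition \ref{Theta_definition}. This reading of $\dim\ker(g-\lambda I_n)$ over $F_m$ is the one point to check carefully. Then
\[ \Theta_\theta(g)=(-1)^{n-1}\Big(\sum_{i=0}^{m-1}\theta(\phi^i(\lambda))\Big)\prod_{j=1}^{n'-1}(1-{q'}^{j}). \]
Multiplying by $\St_{G_n}(g)=(-1)^{n-n'}{q'}^{\binom{n'}{2}}$ gives a total sign of $(-1)^{n-1}(-1)^{n-n'}(-1)^{n'-1}=1$. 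Here the factor $(-1)^{n'-1}$ comes from converting $\prod_{j=1}^{n'-1}(1-{q'}^j)$ into $\prod_{j=1}^{n'-1}({q'}^j-1)$. The product is therefore ${q'}^{\binom{n'}{2}}\big(\sum_i\theta(\phi^i\lambda)\big)\prod_{j=1}^{n'-1}({q'}^j-1)$, which is exactly the formula of Lemma \ref{ind_char} with $k=n$.

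For (2), when $\theta$ is regular, Theorem \ref{character value of cuspidal representation (Dipendra)} gives $\chi_{\pi_\theta}=\Theta_\theta$. The character of $\St_{G_n}\otimes\pi_\theta$ is $\St_{G_n}\cdot\Theta_\theta$, which equals $\chi_{\Ind_{F_n^\times}^{G_n}\theta}$ by (1). Since representations are determined by their characters, the two representations are isomorphic.

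The only delicate steps are the sign bookkeeping and confirming that $t(g)=n/m$ in the semisimple case. Both are routine.
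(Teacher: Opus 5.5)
Your proposal is correct and follows the paper's own proof. Both reduce to the case where $g$ is semisimple and comes from $F_n^\times$, the other cases giving zero on both sides. There, Lemma~\ref{St_char}, Definition~\ref{Theta_definition} (with $t(g)=n/m$) and Lemma~\ref{ind_char} are matched directly, and part (2) is then read off from Green's theorem. The only difference is that you carry out explicitly the sign bookkeeping and the check $t(g)=n'$, which the paper leaves implicit.
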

\begin{proof}
Part (2) clearly follows from part (1). It suffices to prove $\chi_{\Ind_{F_n^\times}^{G_n} \theta}(g)=\St_{G_n}(g) \cdot \Theta_\theta(g)$. We can assume $g$ is semisimple and comes from $F_n^\times$, for otherwise both  $\Theta_{\theta}(g) \St_{G_n}(g)$ and $\chi_{\Ind_{F_n^\times}^{G_n}}(g)$ are  zero.
In the case when  $g$ comes from $F_n^\times$, it follows from Lemma \ref{St_char} and Lemma \ref{character value of cuspidal representation (Dipendra)} that  $\Theta_\theta(g) \St_{G_n}(g)$  equals the expression for $\chi_{\Ind_{F_n^\times}^{G_n}}(g)$ given in Lemma \ref{ind_char}.
 \end{proof}

In the case when $n=2k$ and $P=MN$ is the maximal parabolic subgroup of $G_n$ corresponding to the partition $(k,k)$ of $n=2k$, the group $M \simeq G_k \times G_k$ and $N$ is isomorphic to the additive group $\M_{k\times k}(F)$. In the case when the character $\psi_k=\psi_A$ of $N$ is non-degenerate (that is  $\Rank(A)=k$), it follows from \eqref{eq:Mpsi} that  the group \[ M_{\psi_k}=\left\{ \bbsm h_1 & \\ & h_1 \besm \colon h_1 \in G_k\right\} \simeq G_k.\]
The  twisted Jacquet module $\pi_{N, \psi_k}$ was determined by  D. Prasad:
\begin{theorem} \cite{Dip[1]}  \label{n=2k_nondeg} For  a cuspidal representation $\pi=\pi_\theta$ of $G_{2k}$ associated to a regular character $\theta$  of $F_{2k}^\times$, we have  
\[ \pi_{N,\psi_k} \simeq \Ind_{F_k^\times}^{G_k} (\theta |_{F_k^\times})\]
as $M_{\psi_k}\simeq G_k$-modules.
\end{theorem}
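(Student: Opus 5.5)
We prove Theorem \ref{n=2k_nondeg} by a character computation. By Proposition \ref{character_Jacquet} together with Theorem \ref{character value of cuspidal representation (Dipendra)}, for $h \in G_k$, identified with $m=\bbsm h & \\ & h\besm \in M_{\psi_k}$, we have
\[ \chi_{\pi_{N,\psi_k}}(h) = \frac{1}{q^{k^2}} \sum_{Y \in \M_{k\times k}(F)} \Theta_\theta\!\left(\bbsm h & hY \\ & h \besm\right) \overline{\psi_0(\tr Y)}. \]
The goal is to show that this equals the formula of Lemma \ref{ind_char} for $\Ind_{F_k^\times}^{G_k}(\theta|_{F_k^\times})$. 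Equivalently, by Lemma \ref{ind=st_times_cuspidal}(1), we want it to equal $\St_{G_k}(h)\,\Theta_{\theta|_{F_k^\times}}(h)$. The right side vanishes unless $h$ is semisimple and comes from $F_k^\times$.

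The first step is to reduce to the relevant matrices. The element $g=\bbsm h & hY\\ & h\besm$ has characteristic polynomial $\chi_h(X)^2$. Since $\Theta_\theta(g)=0$ unless the semisimple part of $g$ comes from $F_{2k}^\times$, only those $h$ whose semisimple part $s$ has irreducible minimal polynomial $p$ of degree $m \mid k$ contribute. We first handle non-semisimple $h$. Here we must show the sum vanishes. We would do this by conjugating $g$ with elements $\bbsm 1 & Z\\ & 1\besm$, which replaces $hY$ by $hY + Zh - hZ$, i.e. $Y$ by $Y + h^{-1}Zh - Z$. The value $\Theta_\theta(g)$ is unchanged, so we can average $\psi_0(\tr Y)$ over cosets of the image of $Z\mapsto h^{-1}Zh-Z$. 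The trace of this image is identically zero, so this averaging alone will not kill the sum. Instead, we would split $h = s u$ and use the fact that $\Theta_\theta(g)$ depends only on $s$ and on $t(g)=\dim_{F_{2k}}\ker(g-\lambda)$, a rank condition on $g-\lambda$.

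The core computation is therefore the following. Fix $h$ with semisimple part $s$, working over $F_m = F[\lambda]$, so that $F^{k}$ becomes an $F_m$-module of rank $k'=k/m$ in the semisimple case. We count, for each value of $t=t(g)$, the number of $Y$ with $t(g)=t$, weighted by $\psi_0(\tr Y)$. When $h$ is semisimple, $g$ commutes with the $F_m$-structure only up to the $Y$-block. After decomposing $Y$ under $\mathrm{ad}(s)$, the component of $Y$ outside the centralizer of $s$ can be conjugated away, since there $Z\mapsto s^{-1}Zs - Z$ is invertible. What remains is the component $Y' \in \M_{k'}(F_m)$, and $\tr Y$ becomes $\tr_{F_m/F}\tr(Y')$. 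Then $t(g) = 2k' - \Rank_{F_m}(Y')$, because $g-\lambda$ restricted there is $\bbsm 0 & \lambda Y' \\ & 0 \besm$. This gives a sum over matrices $Y'$ graded by rank, of the form
\[ \sum_{j} \Bigl(\sum_{\Rank Y'=j} \psi_0(\tr_{F_m/F}\tr Y')\Bigr)\prod_{i=1}^{2k'-j-1}(1-q^{im}). \]
The Gauss-type sums over rank-$j$ matrices are standard $q$-analogue counts; we would invoke the combinatorial identities of \S\ref{combin}. The trace factor is
\[ \sum_{i=0}^{2m-1}\theta(\phi^i(\lambda))\cdot(\text{multiplicity}), \]
and it reduces to $2\sum_{i=0}^{m-1}\theta|_{F_k^\times}$-type sums. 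More precisely, the eigenvalue orbit of $\lambda$ in $F_{2k}$ is the same orbit of size $m$, and $\Theta_\theta$ uses $\sum_{i=0}^{m-1}\theta(\phi^i\lambda)$ with $\lambda\in F_m\subset F_k$. This matches $\theta|_{F_k^\times}$ directly.

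The main obstacle is this rank-graded exponential sum identity: we must show it collapses to $(-1)^{k'-1}\cdot{q'}^{\binom{k'}{2}}\prod_{j=1}^{k'-1}({q'}^j-1)$ times the appropriate normalization and sign, including the sign $(-1)^{2k-1}$ from $\Theta_\theta$. Our first attempt would be to use the known formula $\sum_{\Rank Y'=j}\psi(\tr Y') = (-1)^j q'^{\binom{j}{2}}\cdot\#\{\text{rank-}j\text{ matrices in } \M_{k'\times k'-?}\}$-style expressions, via Fourier inversion on the rank stratification, and then telescope with a $q$-binomial identity. The non-semisimple vanishing would follow from the same computation, with the unipotent part shifting the rank counts so that the alternating sum cancels.
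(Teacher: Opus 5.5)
Your plan is to evaluate $q^{-k^2}\sum_{Y}\Theta_\theta(\bbsm h & hY\\ & h\besm)\overline{\psi_0(\tr Y)}$ directly. That is a legitimate route, and it is not the paper's. The paper does not reprove this statement; it recovers it as the case $r=k=n/2$ of Theorem \ref{main_theorem}. There, Theorem \ref{Pi_universal_property}(1) gives $\Theta_\theta(g')=\Theta_{k,2k}(g')\,\Theta_{\theta|_{F_k^\times}}(g_1)$. So the Jacquet-module character at $h$ is $\Theta_{\theta|_{F_k^\times}}(h)$ times the character of $(\Pi_{k,2k})_k=\St_{G_k}$, which holds by the definition of $\Pi_{k,2k}$, and Lemma \ref{ind=st_times_cuspidal}(1) finishes. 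In that route, vanishing at non-semisimple $h$ comes for free from $\St_{G_k}$, at the cost of the long computation in Theorem \ref{Pi_k_n_char}.

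Your semisimple case is sound, and the identity you call the main obstacle is closed by \S\ref{combin}. Over $F_m$, Theorem \ref{LAA_thm} with $A=I_{k'}$ gives $\sum_{\Rank Y'=j}\overline{\psi_0(\tr_{F_m/F}\tr Y')}=(-1)^j{q'}^{\binom{j}{2}}\stirling{k'}{j}_{q'}$, which is a Gaussian binomial, not a count of rank-$j$ matrices. Write $(q';q')_{2k'-1-j}=(q';q')_{k'-1}({q'}^{k'};q')_{k'-j}$. Then Lemma \ref{id_T} at $T={q'}^{k'}$ evaluates your rank-graded sum as $(-1)^{k'}{q'}^{\binom{k'}{2}+k'^2}(q';q')_{k'-1}$. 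Combine this with the prefactor $q^{-k^2}\cdot q^{k^2-kk'}={q'}^{-k'^2}$ from reducing $Y$ to the centralizer, and with the sign $(-1)^{2k-1}$. The result is exactly the value in Lemma \ref{ind_char}. Also, there is no factor $2$ in the trace term.

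The genuine gap is the non-semisimple case. For $h=su$ with $u\neq 1$ you must show the sum is $0$. You only assert that it ``would follow from the same computation'' through an alternating-sum cancellation, with no argument, and that is not the mechanism. Here is what is missing.

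First, the reduction of $Y$ to the centralizer $\M_{k'}(F_m)$ of $s$ still works. This is because $1-\mathrm{Ad}(h^{-1})$ is invertible on the non-trivial $\mathrm{Ad}(s)$-isotypic part of $\M_k(F)$, and that part has trace zero.

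Second, on the $\lambda$-part, set $N=u-1\in\M_{k'}(F_m)$ and $a=\dim\ker N$. Let $\bar Y\colon\ker N\to F_m^{k'}/NF_m^{k'}$ be the map induced by $uY$. Then $t(g)=\dim\ker\bbsm N & uY\\ & N\besm=2a-\Rank\bar Y$. Hence $\Theta_\theta(g)$ is constant on cosets of the $F_m$-subspace $K=\{Y\colon uY(\ker N)\subset NF_m^{k'}\}$.

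Third, if $N\neq 0$, then $K$ contains a rank-one $Y$ that kills $\ker N$ and has $\tr Y=1$. So $Y\mapsto\psi_0(\tr_{F_m/F}\tr Y)$ is a non-trivial character of $K$, and every coset sum vanishes. Each $t$-stratum contributes zero on its own; nothing cancels across strata.

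Without this step you have matched the two characters only at semisimple elements. That does not determine the representation.
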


In the work \cite{HazGor},  O. Gorodetsky and Z. Hazan studied the following generalization of Theorem \ref{n=2k_nondeg} to $G_{kn}$. Let $P=MN$ be the Levi decomposition of the standard  parabolic subgroup $P$ of $G$ corresponding to the partition of $kn$ into $k$ parts each of size $n$. The Levi subgroup $M$ is isomorphic to $G_n\times \dots \times G_n$, and the unipotent radical $N$ consists of block upper triangular matrices where each block has size $n \times n$ and the diagonal blocks are all $I_n$. The non-degenerate characters of $N$ form a single orbit under the action of $M$ on the characters of $N$. For a cuspidal representation $\pi_\theta$ of $G_{kn}$ associated to a regular character $\theta$ of $F_{kn}^\times$, and  a non-degenerate character $\psi$ of $N$, the structure of the module $\pi_{N,\psi}$ is:
\begin{theorem} \cite[Theorem 
 1]{HazGor} For $k \geq 2$, 
let $\pi_\theta$ be 
  a cuspidal representation  of $G_{kn}$ associated to a regular character $\theta$ of $F_{kn}^\times$ and a non-degenerate character $\psi$ of the unipotent radical $N$ of $P$ as above. 
  In this case $M_{\psi} \simeq G_n$ and 
\[\pi_{N,\psi} \simeq \St_{G_n}^{\otimes (k-2)} \otimes  \Ind_{F_n^\times}^{G_n} (\theta |_{F_n^\times}) \]
as an $M_{\psi}$-module. 
\end{theorem}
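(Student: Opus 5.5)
The approach is to compute the character of $\pi_{N,\psi}$ directly and compare it with the character of $\St_{G_n}^{\otimes(k-2)}\otimes\Ind_{F_n^\times}^{G_n}(\theta|_{F_n^\times})$. This follows the strategy used for Theorem \ref{n=2k_nondeg}. I would take $\psi$ to be the standard non-degenerate character, $\psi(X)=\psi_0(\sum_{i=1}^{k-1}\tr X_{i,i+1})$, where the $X_{i,i+1}$ are the superdiagonal blocks. With this choice, $M_\psi$ is the diagonally embedded $G_n$.

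The computation rests on Proposition \ref{character_Jacquet} (which applies verbatim to any parabolic):
\[\chi_{\pi_{N,\psi}}(h)=\frac{1}{|N|}\sum_{u\in N}\Theta_\theta(\Delta(h)u)\overline{\psi(u)},\qquad h\in G_n.\]
By Theorem \ref{character value of cuspidal representation (Dipendra)}, $\Theta_\theta(\Delta(h)u)$ vanishes unless the semisimple part of $\Delta(h)u$ comes from $F_{kn}^\times$. Since $\Delta(h)u$ is block upper triangular, its characteristic polynomial is $\mathrm{char}_h^k$. So $\Theta_\theta$ vanishes unless $h$ has characteristic polynomial $p^{n/m}$ with $p$ irreducible of degree $m$. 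In that case the factor $\sum_i\theta(\phi^i\lambda)$ is constant in $u$, and only the unipotent data, namely $t(\Delta(h)u)$, varies with $u$.

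The first step is to show the sum vanishes when $h$ is not semisimple. I would write $h=su$ and use the centralizer trick: average over the unipotent radical of a parabolic in $Z_{G_n}(s)$ to get cancellation against $\psi$. The second step treats semisimple $h$ coming from $F_n^\times$. Here $Z(h)\cong G_{n'}(F_m)$, and I would reduce to counting the $u\in N$ by the dimension $t$ of $\ker(\Delta(h)u-\lambda)$ over $F_m$, weighted by $\psi$. Writing $u$ in blocks, the condition becomes a linear algebra problem over $F_{q^m}$, and the weighted count should factor as a product over the $k-1$ superdiagonal blocks. The target value is
\[\bigl((-1)^{n-n'}q'^{\binom{n'}{2}}\bigr)^{k-1}\cdot(-1)^{n-1}\Bigl(\sum_i\theta(\phi^i\lambda)\Bigr)\prod_{j=1}^{n'-1}(1-q'^j)\quad\text{(up to sign bookkeeping)},\]
which follows from Lemmas \ref{St_char} and \ref{ind_char} and Lemma \ref{ind=st_times_cuspidal}(1).

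A cleaner alternative is induction on $k$. I would apply Theorem \ref{n=2k_nondeg}-style reasoning in stages: first take the twisted Jacquet module with respect to the last block column, which is a $(k-1)n,n$ maximal parabolic with a rank-$n$ character. Then identify the resulting module of $G_{(k-1)n}\times G_n$ restricted to the twisted diagonal. Each stage would contribute one factor of $\St_{G_n}$ via Lemma \ref{ind=st_times_cuspidal}(2), because $\St\otimes\pi_{\theta'}\simeq\Ind\theta'$ and $\Ind\theta'\otimes\St$ again absorbs a Steinberg factor.

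The main obstacle is the weighted count in the semisimple case. This requires evaluating the Gauss-type sums $\sum_u\psi(u)$ over the $u$ with prescribed $t(\Delta(h)u)$, and checking that they produce exactly the Steinberg factor $q'^{\binom{n'}{2}}$ per block together with the product $\prod(q'^j-1)$. I would first handle $m=n$ ($h$ elliptic regular), where $t=1$ forces the count to be $|N|$ times a simple Gauss sum. I would then induct on $n'$ using the Levi $G_{n'}(F_m)$.
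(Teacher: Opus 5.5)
The paper does not prove this statement; it quotes it from Gorodetsky--Hazan, who compute the character of $\pi_{N,\psi}$ and match it with that of $\St_{G_n}^{\otimes(k-2)}\otimes\Ind_{F_n^\times}^{G_n}(\theta|_{F_n^\times})$. Your main route follows that strategy, but it supplies none of the computation, and the shortcuts you propose are not valid.

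In Step 1, the evident symmetries of $u\mapsto\Theta_\theta(\Delta(h)u)$ are conjugation by $N$ and by $\Delta(Z_{G_n}(h))$. Both fix $\psi$, so they give no cancellation. Conjugating by elements of $Z_{G_n}(s)$ that do not commute with $h$ does not preserve the summand. You would have to use the non-diagonal symmetries $\mathrm{diag}(c_1,\dots,c_k)$ with $c_i\in Z_{G_n}(h)$, which do move $\psi$, and then still evaluate the resulting sum.

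In Step 2, $t=t(\Delta(h)u)$ is not forced to be $1$ when $m=n$. It takes every value in $\{1,\dots,k\}$ (it equals $k$ at $u=1$), so even the elliptic case is a genuine weighted count; for $n=1$, $k=3$ it already involves all three values of $t$. Nor does the count factor over the superdiagonal blocks: for $k=3$, $n=1$ with both superdiagonal entries zero, $t$ is $2$ or $3$ according as the $(1,3)$ entry is nonzero or zero. The step you call the main obstacle is the entire proof, and it is missing.

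Your staged alternative is the right idea, but it is wrong as stated.
\begin{itemize}
\item Theorem \ref{n=2k_nondeg} only covers the $(n,n)$ parabolic of $G_{2n}$, not the $((k-1)n,n)$ parabolic you need for $k\geq 3$.
\item The stabilizer of a rank-$n$ character of $\M_{(k-1)n\times n}(F)$ is not $G_{(k-1)n}\times G_n$. It is a maximal parabolic of $G_{(k-1)n}$, with one diagonal block identified with the $G_n$ factor.
\item ``$\Ind\theta'\otimes\St$ absorbs a Steinberg factor'' is not a mechanism. Nothing is absorbed, the Steinberg factors accumulate, and you never say where each $\St_{G_n}$ comes from.
\end{itemize}

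Within this paper the induction does work if you peel off the \emph{first} block row $N_1$. Apply Theorem \ref{main_theorem} with $(k,n,r)$ replaced by $(n,kn,n)$, so that $\Pi^*_{n,n}=1$ and $d=n$. This gives $\pi_{N_1,\psi}\simeq\Pi_{n,(k-1)n}\otimes\Ind_{F_n^\times}^{G_n}(\theta|_{F_n^\times})$ on $M_n\cong P_{n,(k-1)n}$.

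The remaining part $N'$ of $N$ lies in $M_n$ with trivial $h_1$-block, so it acts trivially on the $\Ind$ factor. Hence $\pi_{N,\psi}\simeq(\Pi_{n,(k-1)n})_{N',\psi|_{N'}}\otimes\Ind_{F_n^\times}^{G_n}(\theta|_{F_n^\times})$.

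The first block row of $N'$ is the unipotent radical of $P_{n,(k-1)n}$, carrying a rank-$n$ character. Definition \ref{def_Pi} gives $\Phi_n^-(\Pi_{n,jn})=\St_{G_n}\otimes\Pi_{n,(j-1)n}$. Iterating down to $\Pi_{n,n}=1$ produces $\St_{G_n}^{\otimes(k-2)}$. So the Steinberg factors come from the definition of $\Pi_{k,n}$, not from Lemma \ref{ind=st_times_cuspidal}, and no character sum is needed.
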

The version of Theorem 1  as it appears in \cite{HazGor} is that 
\[ \pi_{N,\psi} \simeq \St_{G_n}^{\otimes (k-1)} \otimes  
 \, \pi_{\theta  |_{F_n^{\times}}},\]
which assumes the condition that the restriction $\theta |_{F_n^\times}$ is regular, so that the cuspidal representation $\pi_{\theta  |_{F_n^\times}}$of $G_n$ is defined. The version we stated above is implicit in the results of \cite{HazGor} (Theorem 3, Equation 6.2, and Lemma 2.3), and has been confirmed to be correct  in private communication  with O. Gorodetsky and Z. Hazan. We note that in the special case when $\theta |_{F_n^\times}$ is regular, both the  versions agree, as can be seen  from Lemma \ref{ind=st_times_cuspidal}.

 \section{Some combinatorial  results}\label{combin}
We first record some results that we will need from \cite{LAA}. Let 
\[\M_{m \times n}(r,F)=\{X \in \M_{m \times n}(F) \colon \Rank(X)=r\}, \]
and  let $a(m\times n, r, q)$ denote the cardinality of $\M_{m\times n}(r, F)$. It is not hard to see that 
\[a(m\times n, r, q)=\stirling{m}{r}_q  \stirling{n}{r}_q |G_r|,\] where $\stirling{m}{r}_q$ is the Gaussian binomial coefficient (see \eqref{binomial coefficient}).
For $\alpha\in F$, let 
\begin{equation}
f^\alpha_{m \times n, r, k}(q)=\#\{ X\in \M_{m \times n}(r,F) \colon X_{11}+ \dots+X_{kk}=\alpha\}. 
\end{equation}
\begin{theorem} \cite[Theorem 4.2]{LAA} \label{LAA_thm}
Let $\psi_0$ be a non-trivial additive character of $F$, and let $A \in \M_{m \times n}(k, F)$. The sum 
\[ \sum_{X \in \M_{m \times n}(r,F)}  \psi_0(\tr(A^\top X)),\]
as a function of $A$, only depends on $k=\Rank(A)$ and equals the difference
\[ g_{m \times n, r, k}(q):=f^0_{m \times n, r, k}(q) - f^1_{m \times n, r,k}(q).\] We have:
\begin{equation} \label{eq:gmnrk}
    g_{m \times n,r,k}(q) = \sum_{i=0}^r (-1)^i \stirling{k}{i}_q \,  q^{\tbinom{i}{2} + k(r-i)} \, a( (m-k)\times (n-k),r-i,q).
    \end{equation}
\end{theorem}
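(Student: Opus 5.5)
Since $\sum_{X}\psi_0(\tr(A^\top X))$ depends only on the linear functional $X\mapsto \tr(A^\top X)$, the first step is a change of variables that reduces to a normal form for $A$. For $g_1\in G_m$, $g_2\in G_n$ the map $X\mapsto g_1Xg_2$ is a bijection of $\M_{m\times n}(r,F)$ onto itself, and $\tr(A^\top g_1Xg_2)=\tr((g_1^\top A g_2^\top)^\top X)$. Hence the sum is unchanged when $A$ is replaced by $g_1^\top A g_2^\top$. Every $A$ of rank $k$ can be brought this way to $A_k=\bsm I_k&0\\0&0\esm$, so the sum depends only on $k$. For $A=A_k$ we have $\tr(A_k^\top X)=X_{11}+\dots+X_{kk}$. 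Grouping terms by the value $\alpha$ of this trace, the sum becomes $\sum_{\alpha\in F}\psi_0(\alpha)f^\alpha_{m\times n,r,k}(q)$.

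Next I show that $f^\alpha$ is independent of $\alpha$ for $\alpha\neq0$. For $c\in F^\times$, the scaling $X\mapsto cX$ preserves rank and sends the trace value $\alpha$ to $c\alpha$. Since $\sum_{\alpha\in F^\times}\psi_0(\alpha)=-1$ for nontrivial $\psi_0$, the sum equals $f^0-f^1$, which gives the first claim.

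For the explicit formula \eqref{eq:gmnrk}, I would compute the whole sum directly rather than $f^0$ and $f^1$ separately. Write $X=\bsm X_1&X_2\\X_3&X_4\esm$ with $X_1$ of size $k\times k$, so the sum is $\sum\psi_0(\tr X_1)$ over rank-$r$ matrices $X$. The key reorganization is by the rank structure of the complementary block.

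\begin{itemize}
\item Stratify by $V=$ the row space of $X$ restricted to the last $n-k$ coordinates, or better, by the column space $W\subseteq F^m$ of $X$ ($\dim W=r$).
\item Use the identity $\sum_{Y}\psi_0(\tr(BY))$ over all $Y$ with columns in a fixed subspace. This is an orthogonality relation that kills every term except those where a certain projection vanishes.
\end{itemize}

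A cleaner route is induction, as in \cite{LAA}: Fourier-invert to get a recursion in $k$. Replacing $A_k$ by $A_{k-1}$ plus the $(k,k)$ entry and summing over the first row and column produces a recurrence of $q$-Pascal type. The formula $\sum_i(-1)^i\stirling{k}{i}_q q^{\binom i2+k(r-i)}a((m-k)\times(n-k),r-i,q)$ should be checkable against that recurrence via the $q$-binomial theorem $\prod_{j=0}^{k-1}(1-q^jt)=\sum_i(-1)^iq^{\binom i2}\stirling ki_q t^i$.

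The base cases are $k=0$, where the sum is $a(m\times n,r,q)$, and $r=0$, where it is $1$.

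The main obstacle is deriving the recurrence correctly, that is, how the rank of $X$ changes when one peels off a row and column. This is where the $q^{k(r-i)}$ factors come from. I would try first the direct count: decompose according to $i=\operatorname{rank}$ of the part of $X$ "seen" by the character, and use that the character sum over a $k$-dimensional space of trace-coupled coordinates contributes $(-1)^iq^{\binom i2}\stirling ki_q$.
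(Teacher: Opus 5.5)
Your first two steps are correct. The substitution $X\mapsto g_1Xg_2$ shows the sum depends only on $\Rank A$. For $A=A_k$, the scaling $X\mapsto cX$ together with $\sum_{\alpha\neq 0}\psi_0(\alpha)=-1$ identifies the sum with $f^0-f^1$.

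The gap is the explicit formula \eqref{eq:gmnrk}, which is the real content of the statement; the paper does not reprove it but quotes it from \cite{LAA}. You list three routes and carry out none of them:
the recurrence in $k$ is never written down (``should be checkable''); the column-space stratification is dropped after one sentence; and the key claim of the ``direct count'', that the part of $X$ seen by the character contributes $(-1)^iq^{\binom i2}\stirling ki_q$, is exactly the case $m=n=k$ of the theorem, asserted without proof. Nor do you say which decomposition produces the factor $q^{k(r-i)}\,a((m-k)\times(n-k),r-i,q)$, or why the remaining terms cancel.

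Your last idea does work once made precise. Write $X=\bsm X_1&X_2\\X_3&X_4\esm$ with $X_1$ of size $k\times k$, stratify by $\rho=\Rank X_4$, and for fixed $X_2,X_3,X_4$ sum over $X_1$. Multiplying by $\operatorname{diag}(I_k,g)$ on the left and $\operatorname{diag}(I_k,h)$ on the right fixes $X_1$, so you may take $X_4=\bsm I_\rho&0\\0&0\esm$. Split $X_2=\bsm X_2'&X_2''\esm$ and $X_3=\bsm X_3'\\X_3''\esm$ accordingly. Block elimination gives $\Rank X=\rho+\Rank\bsm Y&X_2''\\X_3''&0\esm$ with $Y=X_1-X_2'X_3'$. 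This rank depends on $Y$ only modulo $L=\{Y\colon Y(\ker X_3'')\subseteq\operatorname{col}X_2''\}$. If $X_2''\neq 0$ or $X_3''\neq 0$, then $L$ contains some $xy^\top$ with $y^\top x=1$. So $\tr$ is a nonzero functional on $L$, and the sum of $\psi_0(\tr X_1)$ over every coset of $L$ vanishes. On the surviving terms $X_2''=0=X_3''$ the sum factors as $\sum_{X_2',X_3'}\psi_0(\tr X_2'X_3')\cdot\sum_{Y\in\M_{k\times k}(r-\rho,F)}\psi_0(\tr Y)=q^{k\rho}\,g_{k\times k,r-\rho,k}$. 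Hence
\[ g_{m\times n,r,k}=\sum_{\rho}q^{k\rho}\,a((m-k)\times(n-k),\rho,q)\,g_{k\times k,r-\rho,k}. \]
It remains to show $g_{k\times k,i,k}=(-1)^iq^{\binom i2}\stirling ki_q$. For a subspace $W\subseteq F^k$ with basis matrix $B$, the sum of $\psi_0(\tr X)$ over $X$ with $\operatorname{col}X\subseteq W$ is a full character sum over $\{BZ\}$. It therefore equals $1$ if $W=0$ and $0$ otherwise. By M\"obius inversion on the subspace lattice, the sum over $X$ with column space exactly $W$ is $\mu(0,W)=(-1)^{\dim W}q^{\binom{\dim W}2}$. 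Summing over $\dim W=i$ gives the claim, and substituting $i=r-\rho$ yields \eqref{eq:gmnrk}. Alternatively, the same M\"obius argument for general $A$ gives $\sum_u(-1)^{r-u}q^{\binom{r-u}2+nu}\stirling{m-k}{u}_q\stirling{m-u}{r-u}_q$, which converts to \eqref{eq:gmnrk} via $q$-Vandermonde and \eqref{eq:qbinthm}.
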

We recall the definitions of the $q$-binomial coefficient, the $q$-Pochhammer symbol, and the $q$-binomial theorem.

\begin{definition} Let $n$ be a non-negative integer. Let $(T;q)_n$ be the  polynomial in $\bZ[T]$ of degree $n$  defined by
\[(T;q)_{n}= \prod\limits_{i=0}^{n-1}(1-Tq^{i}),\quad  n>0. \]
We take $(T;q)_0=1$.
The Gaussian or $q$-binomial coefficient 
 $\stirling{n}{r}_{q}$ counts the number of $r$ dimensional vector subspaces  of $F^n$ and is given by the formula
 \begin{equation}\label{binomial coefficient} \stirling{n}{r}_{q}= \frac{(q^{n-r+1};q)_r}{(q;q)_r}. 
 \end{equation}
The following polynomial identity in $\bZ[T]$ is known as the $q$-binomial theorem:
\begin{equation} \label{eq:qbinthm} (T;q)_{n}= \sum_{r=0}^{n}\stirling{n}{r}_{q}(-1)^{r}q^{\tbinom{r}{2}}T^{r}.\end{equation}
\end{definition}
Consider the $\bQ$-vector space  consisting of polynomials in $\bQ[T]$ of degree at most $n$. The polynomials 
$(1,T,T^2, \dots, T^n)$ and $((T;q)_0, \dots, (T;q)_n)$
are two  bases for this vector space. 
The expansion of the second basis vectors in terms of the first basis is given by the $q$-binomial theorem \eqref{eq:qbinthm}. The next result gives the expansion of the first basis vectors in terms of the second basis:
  \begin{lemma} \label{id_T}
\begin{equation} \label{eq:id_T}
q^{\binom{n}{2}} T^n = \sum_{i=0}^n  (-1)^i q^{ \binom{n-i}{2}} \stirling{n}{i}_q  \, (T;q)_i
\end{equation}
\end{lemma}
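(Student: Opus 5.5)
We need to prove $q^{\binom{n}{2}}T^n=\sum_{i=0}^n(-1)^i q^{\binom{n-i}{2}}\stirling{n}{i}_q(T;q)_i$. The plan is to prove it by induction on $n$, using the recursion $(T;q)_{i+1}=(T;q)_i(1-Tq^i)$, i.e.\ $T\,(T;q)_i = q^{-i}\big((T;q)_i-(T;q)_{i+1}\big)$. Denote the right-hand side by $R_n(T)$. The case $n=0$ is trivial: both sides equal $1$.

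For the inductive step, multiply the identity for $n$ by $q^{n}T$; the left side becomes $q^{\binom{n+1}{2}}T^{n+1}$. On the right, substituting $T(T;q)_i=q^{-i}((T;q)_i-(T;q)_{i+1})$ gives
\[ \sum_{i=0}^n (-1)^i q^{\binom{n-i}{2}+n-i}\stirling{n}{i}_q\big((T;q)_i-(T;q)_{i+1}\big). \]
Since $\binom{n-i}{2}+n-i=\binom{n+1-i}{2}$, collecting the coefficient of $(T;q)_j$ gives
\[ (-1)^j\Big(q^{\binom{n+1-j}{2}}\stirling{n}{j}_q + q^{\binom{n+2-j}{2}}\stirling{n}{j-1}_q\Big). \]
I must show this equals $(-1)^j q^{\binom{n+1-j}{2}}\stirling{n+1}{j}_q$. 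Dividing by $q^{\binom{n+1-j}{2}}$ and using $\binom{n+2-j}{2}-\binom{n+1-j}{2}=n+1-j$, the required identity is the $q$-Pascal rule
\[ \stirling{n+1}{j}_q=\stirling{n}{j}_q+q^{n+1-j}\stirling{n}{j-1}_q, \]
a standard consequence of \eqref{binomial coefficient}. It can be checked directly from the product formula, or combinatorially: count the $j$-subspaces of $F^{n+1}$ according to whether they lie in $F^n$.

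An alternative, non-inductive route is to invert the $q$-binomial theorem \eqref{eq:qbinthm} via Gaussian inversion. However, bookkeeping the powers of $q$ and signs there is messier, so I will use the induction.

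The only delicate point is the exponent bookkeeping: matching $q^{\binom{n-i}{2}}\cdot q^{n}\cdot q^{-i}$ to $q^{\binom{n+1-i}{2}}$, and choosing the correct form of $q$-Pascal, the one with the factor $q^{n+1-j}$ on the $\stirling{n}{j-1}_q$ term rather than on $\stirling{n}{j}_q$. As a final check I will confirm the extreme coefficients $j=0$ and $j=n+1$ separately: for $j=0$ the second term is absent, and for $j=n+1$ the coefficient is $(-1)^{n+1}$.
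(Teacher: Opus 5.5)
Your proof is correct but takes a different route from the paper. Each ingredient checks out: the recursion $T\,(T;q)_i=q^{-i}\big((T;q)_i-(T;q)_{i+1}\big)$, the exponent identity $\binom{n-i}{2}+n-i=\binom{n+1-i}{2}$, and the $q$-Pascal rule $\stirling{n+1}{j}_q=\stirling{n}{j}_q+q^{n+1-j}\stirling{n}{j-1}_q$, with the factor $q^{n+1-j}$ on the correct term, as your hyperplane count confirms. The boundary coefficients at $j=0$ and $j=n+1$ also agree.

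The paper's argument is not inductive. It substitutes the $q$-binomial theorem \eqref{eq:qbinthm} for each $(T;q)_i$ and interchanges the two sums. It then uses $\stirling{n}{i}_q\stirling{i}{k}_q=\stirling{n}{k}_q\stirling{n-k}{n-i}_q$ and recognizes the inner sum as $(1;q)_{n-k}=\delta_{k,n}$, which is \eqref{eq:qbinthm} again at $T=1$. This is exactly the Gaussian-inversion route you set aside.

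The paper's version shows directly that the lemma is the inverse change of basis to \eqref{eq:qbinthm}, which is how the paper motivates the statement. Its cost is the product identity for Gaussian binomials and two uses of \eqref{eq:qbinthm}. Your induction needs only the Pochhammer recursion and $q$-Pascal, so it is self-contained and never uses \eqref{eq:qbinthm}, but it hides that inversion structure.
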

 \begin{proof}   The RHS of \eqref{eq:id_T}  can be written using the $q$-binomial theorem mentioned above as
\[ \sum_{i=0}^n  (-1)^i q^{ \binom{n-i}{2}} \stirling{n}{i}_q  \, ( \sum_{k=0}^i q^{\binom{k}{2}}\stirling{i}{k}_q  (-T)^k)
\]
Interchanging the order of summation, and using $\stirling{n}{i}_q \stirling{i}{k}_q = \stirling{n}{k}_q \stirling{n-k}{n-i}_q$, we get
\[ (-1)^n \sum_{k=0}^n q^{\binom{k}{2}}  (-T)^{k} \stirling{n}{k}_q     \, ( \sum_{i \geq k} (-1)^{n-i} q^{ \binom{n-i}{2}} \stirling{n-k}{n-i}_q ).
\]
The expression $\displaystyle \sum_{i \geq k} (-1)^{n-i}  q^{ \binom{n-i}{2}} \stirling{n-k}{n-i}_q$ equals $(1;q)_{n-k} = \delta_{k,n}$. Thus, the above expression is just $q^{\binom{n}{2}}  T^n$.
\end{proof} 

\begin{lemma} \label{amnr_lemma}
The following polynomial identity holds in the polynomial ring  $\bQ[S]$:
\[\sum_r a(m \times n,r, q) (S;q)_{n-r}= q^{mn} (Sq^{-m};q)_n.\] 
\end{lemma}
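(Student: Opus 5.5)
We prove the identity by evaluating both sides at infinitely many values of $S$ and giving each evaluation a counting interpretation. Both sides are polynomials in $S$ of degree at most $n$. So it suffices to show they agree at the infinitely many distinct points $S=q^{-p}$, $p=0,1,2,\dots$.

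The key observation is a formula for the number of injective $F$-linear maps $F^{a}\to F^{b}$:
\[ \prod_{i=0}^{a-1}(q^{b}-q^{i}) = q^{ab}\,(q^{-b};q)_{a}. \]
Hence the right-hand side at $S=q^{-p}$,
\[ q^{mn}(q^{-p-m};q)_n, \]
equals $q^{-pn}$ times the number of injective linear maps $F^n \to F^{m}\oplus F^{p}$. Call this number $\mathcal N$; both sides of the lemma at $S=q^{-p}$ will turn out to equal $q^{-pn}\mathcal N$.

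Next, count $\mathcal N$ a second way. Write an injective map $F^n\to F^m\oplus F^p$ as a pair $(X,Y)$ with $X\colon F^n\to F^m$ and $Y\colon F^n\to F^p$.
\begin{enumerate}
\item[(i)] Stratify by $r=\Rank(X)$. There are $a(m\times n,r,q)$ choices of $X$, and the kernel $K=\ker X$ has dimension $n-r$.
\item[(ii)] The pair $(X,Y)$ is injective if and only if $Y|_K$ is injective.
\item[(iii)] Choose a complement $C$ to $K$. Then $Y$ is determined by $Y|_K$, which must be injective, and $Y|_C$, which is arbitrary.
\item[(iv)] The number of admissible $Y$ is therefore
\[ q^{p(n-r)}(q^{-p};q)_{n-r}\cdot q^{pr} = q^{pn}(q^{-p};q)_{n-r}. \]
\end{enumerate}
Summing over $r$ gives
\[ \mathcal N = q^{pn}\sum_r a(m\times n,r,q)\,(q^{-p};q)_{n-r}. \]

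Comparing this with the first count of $\mathcal N$ and dividing by $q^{pn}$ yields the identity at $S=q^{-p}$ for every $p\ge 0$. Since a polynomial of degree at most $n$ is determined by its values at infinitely many points, the identity holds in $\bQ[S]$.

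The only step needing care is (iii)–(iv): checking that the count of $Y$ does not depend on the particular $X$ beyond its rank. This is clear because the count depends only on $\dim K = n-r$.
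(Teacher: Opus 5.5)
Your proof is correct and is essentially the paper's argument. The paper counts full-rank $n\times \ell$ matrices $[X_1\ X_2]$ stratified by the rank of $X_1$, which is the transpose of your count of injective maps $F^n \to F^m\oplus F^p$ with $\ell=m+p$. It then passes to a polynomial identity via the infinitely many points $T=q^\ell$, which under $S=q^mT^{-1}$ are exactly your points $S=q^{-p}$.
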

\begin{proof}
For $\ell \geq n$, the size of the set $\M_{n \times \ell}(n,F)$ of all full rank $n \times \ell $ matrices is 
 \[ a( n\times \ell, n,q)=\prod_{i=0}^{n-1} (q^\ell-q^i).\]
 For such a matrix $X \in \M_{n \times \ell}(n,F)$, and for $m \leq \ell$,  let $X=\begin{bmatrix}  X_1 &  X_2 \end{bmatrix}$ where $X_1$ has size $n \times m$ and $X_2$ has size $ n \times (\ell-m)$. 
For $X_1$ of rank $r$, let $U$ denote the column span of $X_1$. The number of matrices $X_2$ such that 
$\begin{bmatrix}  X_1 &  X_2 \end{bmatrix}$ has rank $n$, is same as the number of maps $F^{\ell-m} \to F^n$ such that the associated map $F^{\ell-m} \to F^n/U$ has rank $(n-r)$. In other words, the number of choices for $X_2$ is \[ q^{(\ell-m)r} \cdot a(n-r \times \ell-m, n-r, q)= q^{(\ell-m)r} \cdot \prod_{i=0}^{n-r-1}(q^{\ell-m}-q^i). \]

 Therefore, we get the identity
\[  \prod_{i=0}^{n-1} (q^\ell-q^i)=\sum_{r} a(m \times n, r, q) q^{(\ell-m)r} \prod_{i=0}^{n-r-1}(q^{\ell-m}-q^i). \]
Multiplying 
 by $q^{mn}$ we can rewrite this as 
 \[  q^{mn} \prod_{i=0}^{n-1} (q^\ell-q^i)=\sum_{r} a(m \times n, r, q) q^{\ell r} \prod_{i=0}^{n-r-1}(q^{\ell}-q^{i+m}). \]
This implies the polynomial identity in $\bQ[T]$:
\[
q^{mn} \prod_{i=0}^{n-1} (T-q^i)=
\sum_{r} a(m \times n, r, q) T^{r} \prod_{i=0}^{n-r-1}(T-q^{i+m}),
\]
because the difference between the RHS and LHS is a polynomial of degree at most $n$ in $\bQ[T]$ with infinitely many roots $T=q^\ell$ for $\ell \geq n$, and hence it is the  zero polynomial. Dividing by $T^n$, we get the identity in $\bQ[T^{-1}]$:
\[
q^{mn} \prod_{i=0}^{n-1} (1 -q^i T^{-1})=
\sum_{r} a(m \times n, r, q)   \prod_{i=0}^{n-r-1}(1-q^{i+m}T^{-1}).
\]
In terms of $S=q^{m}T^{-1}$ we can rewrite this as the identity in $\bQ[S]$ given by:
\[
q^{mn}(Sq^{-m};q)_n=
\sum_{r} a(m \times n, r, q) \, 
(S;q)_{n-r}.
\]
\end{proof}

\section{A fundamental  representation of $P_{k,n}$.} \label{section_Pi}
\subsection{Generalizing the Bernstein-Zelevinsky functors to Parabolic subgroups of $G_n$}
\label{sec_equivalence} 
We begin by recalling and fixing some notation. For a representation $(\tau,V)$ of $P = P_{k,n}$, and for $0 \leq r \leq \mu:=\min\{k,n-k\}$, we recall from Definition \ref{def_Mpsi}, that the rank $r$ twisted Jacquet module $(\tau_r,V_r)$ of $\tau$ is a representation of the group $M_r \subset P^*_{r,k} \times P_{r,n-k}$ given by 
\[
    M_r=\left\{\bbsm g_1 &  \\
 &g_2 \besm \colon 
    g_1=\bbsm h_1 &  0\\h_5 & h_4 \besm \in P^*_{r,k}, \;  g_2=\bbsm h_1& h_3\\0&h_2 \besm \in P_{r,n-k} \right\}. \]
We note that the same  matrix  $h_1 \in G_r$ occurs in both the blocks $g_1$ and $g_2$.\\

Given two representations $(\tau, V)$ and $(\tau',V')$ of $P$, and an intertwining operator $f:(\tau,V) \to (\tau',V')$, we have a corresponding intertwining operator $f_r:(\tau_r,V_r) \to (\tau_r', V_r')$.\\

Let $P(r)=N \rtimes M_r$ be the subgroup of $P$ consisting of matrices $\bbsm g_1 & g_1g_3\\ & g_2 \besm$ such that $\bbsm g_1 & \\ & g_2 \besm \in M_r$. The character $\psi_r$ of $N$ extends to a well-defined character of $P(r)$ by $\psi_r(mn)= \psi_r(n)$, which we denote by the same symbol $\psi_r$.\\

Any representation $\sigma$ of $M_r$ inflates to a representation of $P(r)$ via the epimorphism $P(r) \to M_r$ defined by $\bbsm g_1 & g_1g_3\\ & g_2 \besm \mapsto \bbsm g_1 & \\ & g_2 \besm$. We continue to denote the inflated representation of $P(r)$ by $\sigma$.
\begin{definition} \label{Phi_r_def} \hfill  \begin{enumerate}
\item The functor $\Phi_r^- \colon {\rm Rep}(P) \to {\rm Rep}(M_r)$ is defined on objects by 
\[ \Phi_r^-(\tau,V)=(\tau_r,V_r),\] and on morphisms $f:(\tau,V) \to (\tau',V')$ by $\Phi_r^-(f)=f_r$. \\

\item The functor $\Phi_r^+\colon  {\rm Rep}(M_r) \to {\rm Rep}(P)$ is defined in terms of  the induction functor by \[ \Phi_r^+(\sigma)=\Ind_{P(r)}^P (\psi_r \otimes \sigma).\]
\item Viewing ${\rm Rep}(M_r)$ and $ {\rm Rep}(P)$ as Abelian categories, we consider the twisted Jacquet functor 
\[ \fF^- \colon {\rm Rep}(P) \to \bigoplus_{r=0}^\mu {\rm Rep}(M_r), \qquad \fF^-(\tau)=(\Phi_0^-(\tau), \dots, \Phi_{\mu}^-(\tau)),  \]
and the functor
\[ \fF^+ \colon  \bigoplus_{r=0}^\mu {\rm Rep}(M_r) \to {\rm Rep}(P), \qquad \fF^+(\sigma_0, \dots, \sigma_\mu) = \oplus_{r=0}^\mu \Phi^+_r(\sigma_r).  \]
\end{enumerate}
\end{definition}
In the case $k=n-1$, we have $P=F^\times \times \Mir_n$, where 
$F^\times$ is the subgroup of scalar matrices, and $\Mir_n$ is the Mirabolic subgroup of $P$ consisting of matrices in $P$ whose $(n,n)$-th entry is $1$. Here $\mu=\text{min}\{n-1,1\}=1$, and  $M_0 \simeq F^\times  \times G_{n-1}$ and $M_1 \simeq F^\times  \times \Mir_{n-1}$. The functors $\Phi_0^{\pm}$ and $\Phi^{\pm}_1$ are of the form \[\Phi_0^-=1 \boxtimes \Psi^-, \quad \Phi_1^-=1 \boxtimes \Phi^-, \quad  
\Phi_0^+=1 \boxtimes \Psi^+, \quad \Phi_1^+=1 \boxtimes \Phi^+,\]
where $ {\rm Rep}(\Mir_{n-1}) \xrightleftharpoons
[\Phi^-]{\Phi^+} {\rm Rep}(\Mir_n)
\xrightleftharpoons
[\Psi^+]{\Psi^-} {\rm Rep}(G_{n-1})$
are the Bernstein-Zelevinsky functors  
(see \cite[Section 5.11]{BerZel[2]}).
The properties of the functors $\Psi^{\pm}$ and $\Phi^{\pm}$ given in \cite[Proposition 5.12]{BerZel[2]} generalize to the functors $\Phi_r^{\pm}$. The next theorem generalizes the parts (a)-(f) of that Proposition, whereas part (g) is treated in Section \ref{sec_Pi_construction}. The representation $\tau^0$ of part (g) of the Proposition is generalized to the representation $\Pi_{n-k,n}^\dagger$ of $P_{k,n}$ that we construct in Section \ref{sec_Pi_construction}.
\begin{theorem} [Properties of the functors $\Phi_r^{\pm}$] \label{Phi+-_properties}  \hfill
\begin{enumerate} [label=\alph*)]
\item[\emph{(a)}] The functors $\Phi_r^{\pm}$ are exact. 
\item[\emph{(b)}] The pair of functors $\Phi_r^-$ and $\Phi_r^+$ are (left and right) adjoints of each other.
\item[\emph{(c)}]  For $r \neq s$, we have $\Phi_r^- \Phi_s^+=0$. For $\sigma \in {\rm Rep}(M_r)$, we have $\Phi_r^- \Phi_r^+(\sigma) \simeq \sigma$.
\item[\emph{(d)}] For $\tau \in {\rm Rep}(P)$, we have 
        \[ \tau \simeq \oplus_{r=0}^\mu \Phi_r^+ \Phi_r^-(\tau).\]
In particular, the pair of functors 
\[ {\rm Rep}(P) \xrightleftharpoons[\fF^+]{\fF^-}\bigoplus_{r=0}^\mu {\rm Rep}(M_r)\]
give an equivalence between the categories ${\rm Rep}(P)$ and $\oplus_{r=0}^\mu {\rm Rep}(M_r)$.\\
\item[\emph{(e)}] For $\tau \in {\rm Rep}(P)$,  the condition that $\tau|_N$ is identity, is equivalent to the condition that $\Phi_r^-(\tau)=0$ for all $r \geq 1$.\\
\item[\emph{(f)}] If $\sigma \in {\rm Rep}(M_r)$, then $\Phi_r^+$  and $\Phi_r^-$  establish a bijection between $\sigma$ and $\Phi_r^+(\sigma)$. In particular, $\sigma$  and $\Phi_r^+(\sigma)$  are irreducible simultaneously. 
\end{enumerate}
\end{theorem}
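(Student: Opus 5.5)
The plan is to reduce everything to Clifford theory for the abelian normal subgroup $N\trianglelefteq P$, together with Mackey theory. The key observation is that $\tau|_N$ decomposes into isotypic components $V=\bigoplus_{\chi\in\widehat N}V_{N,\chi}$. The group $M$ acts on $\widehat N$ with orbits $\mathcal O_r=M\cdot\psi_r$, $0\le r\le\mu$, and $M_r$ is the stabilizer of $\psi_r$. Hence $V=\bigoplus_r W_r$ with $W_r=\bigoplus_{\chi\in\mathcal O_r}V_{N,\chi}$ a $P$-subrepresentation, and $p\in P$ maps $V_{N,\chi}$ isomorphically onto $V_{N,p\cdot\chi}$.

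First, exactness (a). The functor $\Phi_r^-$ is exact because $V\mapsto V_{N,\psi_r}$ is the image of the idempotent $\text{Pr}_N$, so it commutes with kernels and cokernels; equivalently, we are taking isotypic components in characteristic zero. The functor $\Phi_r^+$ is exact since inflation, tensoring with a character, and induction are all exact.

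Next, adjunction (b). The direction $\Hom_P(\Phi_r^+\sigma,\tau)\simeq\Hom_{P(r)}(\psi_r\otimes\sigma,\tau|_{P(r)})$ is Frobenius reciprocity. The latter equals $\Hom_{M_r}(\sigma,\tau_r)$, because any $P(r)$-map from $\psi_r\otimes\sigma$ lands in $V_{N,\psi_r}$, and conversely. The other adjunction follows either because for finite groups induction is also left adjoint to restriction (and semisimplicity gives a two-sided adjunction), or directly by projecting with $\text{Pr}_N$.

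For (c), I would compute $(\Ind_{P(r)}^P(\psi_r\otimes\sigma))|_N$ by Mackey's formula. Since $N\subset P(r)$ is normal and $P=\bigsqcup_{m\in M/M_r}mP(r)$, the restriction is $\bigoplus_{m\in M/M_r}m\cdot(\psi_r\otimes\sigma)$, on which $N$ acts by the character $m\cdot\psi_r$. These characters run over the orbit $\mathcal O_r$ exactly once. Therefore the $\psi_s$-isotypic part vanishes for $s\neq r$. For $s=r$ only the coset $m\in M_r$ contributes, and that summand is $\sigma$ as an $M_r$-module.

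For (d), I would show that $W_r\simeq\Ind_{P(r)}^P(V_{N,\psi_r})$. This is standard Clifford theory. The natural map $\Ind_{P(r)}^P V_{N,\psi_r}\to W_r$, $f\mapsto\sum_{p\in P/P(r)}\tau(p)f(p)$, is surjective since the translates $\tau(m)V_{N,\psi_r}=V_{N,m\cdot\psi_r}$ span $W_r$. Both sides have dimension $|\mathcal O_r|\dim V_{N,\psi_r}$, so the map is an isomorphism. Here $N$ acts on $V_{N,\psi_r}$ by $\psi_r$, so $V_{N,\psi_r}=\psi_r\otimes\tau_r$ as a $P(r)$-module, and the $P(r)$-action factors appropriately. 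Summing over $r$ gives $\tau\simeq\oplus_r\Phi_r^+\Phi_r^-\tau$. The equivalence of categories follows from (c) and (d) together with naturality of these isomorphisms, which comes from the unit and counit of the adjunction (b).

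Parts (e) and (f) are then quick. For (e), $\tau|_N$ is trivial iff $V=V_{N,\psi_0}$ iff $W_r=0$ for $r\ge1$. For (f), $\Phi_r^\pm$ restrict to mutually inverse equivalences between $\text{Rep}(M_r)$ and the full subcategory of $\tau$ with $\tau=W_r$. Since equivalences preserve simple objects, $\sigma$ and $\Phi_r^+\sigma$ are irreducible simultaneously.

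The main obstacle I expect is bookkeeping in (c)/(d). One must check that $P(r)=N\rtimes M_r$ is exactly the stabilizer of $\psi_r$ in $P$, which holds because $N$ acts trivially on $\widehat N$. One must also verify that the extension of $\psi_r$ to $P(r)$ is a character, i.e. that $\psi_r(m n m^{-1})=\psi_r(n)$ for $m\in M_r$. Finally, the $P(r)$-module $V_{N,\psi_r}$ must decompose as $\psi_r\otimes(\text{inflation of }\tau_r)$; this uses that $N$ acts on $V_{N,\psi_r}$ by the scalar character $\psi_r$.
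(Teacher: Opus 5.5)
Your proof is correct, but it takes a different route from the paper's. The paper works almost entirely with characters. For (b) it combines Frobenius reciprocity with the character formula of Proposition~\ref{character_Jacquet} to get $\langle \chi_{\Phi_r^+(\sigma)},\chi_\tau\rangle_P=\langle\chi_\sigma,\chi_{\Phi_r^-(\tau)}\rangle_{M_r}$. For (c) and (d) it cites the little group method: every irreducible of $P$ is some $\Phi_r^+(\sigma)$ with $\sigma\in\widehat{M_r}$, and these are pairwise non-isomorphic. It then decomposes $\tau$ into irreducibles and reads off multiplicities.

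You instead carry out the Clifford--Mackey argument by hand:
\begin{itemize}
\item the isotypic decomposition $V=\bigoplus_r W_r$ over the $M$-orbits $\mathcal O_r$;
\item Mackey's formula, showing that the $\psi_s$-isotypic part of $\Phi_r^+(\sigma)$ is zero for $s\neq r$ and, for $s=r$, consists of the functions supported on $P(r)$;
\item the counit map $\Ind_{P(r)}^P V_{N,\psi_r}\to W_r$ together with a dimension count for (d).
\end{itemize}

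Your route has three advantages. It is self-contained, since it essentially proves the little group statement that the paper quotes. It gives genuine Hom-level adjunctions and natural isomorphisms $\fF^+\fF^-\simeq\mathrm{id}$ and $\fF^-\fF^+\simeq\mathrm{id}$. The paper's inner-product argument yields only equalities of multiplicities and objectwise isomorphisms, whereas an equivalence of categories strictly needs naturality. Finally, it gives a cleaner proof of (f) via the full subcategory $\{\tau:\tau=W_r\}$, which is closed under subobjects.

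The paper's route is shorter given the citation, and it matches the character-theoretic computations used later in the paper.
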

\begin{proof}
(a) The exactness of the functors $\Phi_r^+$ follows from the exactness of the induction functor. As for the exactness of $\Phi_r^-$, applying this  functor to a given exact sequence $0 \to (\tau',V') \xrightarrow{\imath} (\tau, V) \xrightarrow{\jmath} (\tau'', V'') \to 0$, we get maps $(\tau_r',V_r') \xrightarrow{\imath_r} (\tau_r, V_r)$ and $ (\tau_r, V_r) \xrightarrow{\jmath_r} (\tau_r'', V''_r)$. Since $V'_r, V_r, V_r''$ are submodules of $V', V, V''$ for us,  clearly (i)  $V_r''=\jmath(V_r)$, (ii) $\ker(\imath_r)=0$, and (iii) $\ker(\jmath_r)=\ker(\jmath) \cap V_r = \imath(V') \cap V_r = \imath_r(V'_r)$, which shows that $0 \to (\tau_r',V_r') \xrightarrow{\imath_r} (\tau_r, V_r) \xrightarrow{\jmath_r} (\tau_r'', V_r'') \to 0$ is an exact sequence.\\

(b) For $\sigma \in {\rm Rep}(M_r)$ and $\tau \in {\rm Rep}(P)$, we have by Frobenius reciprocity and Proposition \ref{character_Jacquet} that: 
\[ \langle \chi_{\Phi_r^+(\sigma)}, \chi_\tau \rangle_P = \langle \chi_\sigma, \bar{\psi_r} \otimes \chi_{\tau|_{P(r)}} \rangle_{P(r)}=\langle \chi_\sigma, \chi_{\Phi_r^-(\tau)} \rangle_{M_r}.\]
This shows that $\Phi_r^+$ and $\Phi_r^-$ are a pair of (2-sided) adjoint functors.\\

(c)  We recall that by the \emph{little group method} for representations of a group with an abelian normal subgroup  (for example \cite[Theorem 5.2]{CecScaTol}),  any irreducible representation of $P$ is of the form $\Ind_{P(r)}^P (\psi_r \otimes \sigma)=\Phi_r^+(\sigma)$  for some $0 \leq r \leq \mu$ and an irreducible representation $\sigma$ of $M_r$. In particular, for irreducible representations $\sigma$ of $M_r$ and $\eta$ of $M_s$, we have $\langle \chi_{\Phi_r^+(\sigma)}, \chi_{\Phi_s^+(\eta)} \rangle_P$ is $0$ unless $r=s$ and $\eta \simeq \sigma$. In particular, for $r \neq s$ we have:
\[ 0= \langle \chi_{\Phi_r^+(\sigma)}, \chi_{\Phi_s^+(\eta)} \rangle_P \overset{\text{by part b)}}{=} \langle \chi_\sigma, \chi_{\Phi_r^- \Phi_s^+(\eta)} \rangle_{M_r}.\]
Since this is true for every irreducible $\sigma$, we conclude $\Phi_r^{-} \Phi_s^+ =0$.\\
On the other hand if $r=s$, we get
\[  \langle \chi_\sigma, \chi_{\Phi_r^- \Phi_r^+(\eta)} \rangle_{M_r} \overset{\text{by part b)}}{=}  \langle \chi_{\Phi_r^+(\sigma)}, \chi_{\Phi_r^+(\eta)} \rangle_P   = \begin{cases}
    1 &\text{ if  $\eta \simeq \sigma$}\\0  &\text{ if  $\eta \not\simeq \sigma$}
\end{cases}.\]
Thus we have shown $\Phi_r^- \Phi_r^+(\sigma) \simeq \sigma$ for every irreducible $\sigma$ and hence for any $\sigma \in {\rm Rep}(M_r)$.\\

(d) We recall that the symbol $\widehat{\mG}$ denotes the set of isomorphism classes of irreducible representations of a finite group $\mG$. As observed above, any $\tau \in \widehat{P}$  is isomorphic to $\Phi_r^+(\sigma_r)$ for some $0 \leq r \leq \mu$ and some $\sigma_r \in \widehat{M_r}$. Therefore, given $\tau \in  {\rm Rep}(P)$ there exist non-negative integers $a_{r, \sigma_r}$ such that 
\[\tau \simeq \bigoplus_r  \bigoplus_{ \sigma_r \in \widehat{M_r}} a_{r, \sigma_r} \Phi_r^+(\sigma_r).\]
By part (c), we get 
\[ \Phi_r^-(\tau)= \bigoplus_{ \sigma_r \in \widehat{M_r}} a_{r, \sigma_r} \, \sigma_r.\]
Therefore, $\Phi_r^+ \Phi_r^-(\tau) \simeq \oplus_{\sigma_r \in \widehat{M_r}} a_{r,\sigma_r} \Phi_r^+(\sigma_r)$, and hence  $\tau \simeq  \oplus_r \Phi_r^+ \Phi_r^-(\tau)$.\\

In terms of the functors $\fF^{\pm}$, we have shown $\fF^+ \circ \fF^-$ is isomorphic to the identity functor of 
${\rm Rep}(P)$. Similarly, given $(\eta_0, \dots, \eta_\mu) \in \oplus_r {\rm Rep}(M_r)$, we  have  $\fF^+(\eta_0, \dots, \eta_\mu)=\oplus_r  \Phi_r^+(\eta_r)$. Applying the functor $\fF^-$ to this, and using part c), we get $\fF^- \fF^+(\eta_0, \dots, \eta_\mu) \simeq   (\eta_0, \dots, \eta_\mu)$, which shows that $\fF^- \circ \fF^+$ is isomorphic to the identity functor of $\oplus_r {\rm Rep}(M_r)$. Thus, we have shown that the functors $\fF^{\pm}$ establish an equivalence between the categories  $ {\rm Rep}(P)$ and $\oplus_r {\rm Rep}(M_r)$.\\

(e) For $(\tau,V) \in {\rm Rep}(P)$, the condition that $\tau(n)$ is the identity transformation of $V$ for all $n \in N$, is clearly equivalent to the condition that $V_{N, \psi} =\{0_V\}$ for a non-trivial character $\psi$ of $N$, and $V_{N, \psi}=V$ for the trivial character $\psi$ of $N$.\\

(f) If $(\tau,V)= \Phi_r^+(\sigma)$, then for any subrepresentation $(\tau', W)$ (satisfying $\tau'(g)=\tau(g)|_W$ for $g \in P$), we have by part (d) that $\tau(g)|_W =(\Phi_r^+ \Phi_r^- \tau)(g)|_W$, and hence $\tau'= \Phi_r^+ \Phi_r^-(\tau')$.
\end{proof}

\begin{corollary} \label{Cat_equiv_corollary}
     A representation $\tau$ of the maximal parabolic subgroup $P=P_{k,n}$ of $G_n$ is irreducible if and only if all but one of the  twisted Jacquet modules $\tau_0, \dots, \tau_\mu$ are zero, and the non-zero one is irreducible.
 \end{corollary}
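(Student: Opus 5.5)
The plan is to deduce the corollary directly from the equivalence of categories in part (d) of Theorem \ref{Phi+-_properties}, together with parts (c) and (f). Given $\tau \in {\rm Rep}(P)$, part (d) gives the decomposition
\[ \tau \simeq \bigoplus_{r=0}^{\mu} \Phi_r^+(\tau_r), \qquad \tau_r = \Phi_r^-(\tau). \]
Both directions of the corollary will be read off from this decomposition.

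For the forward direction, suppose $\tau$ is irreducible. Since the displayed decomposition is a direct sum of subrepresentations, exactly one summand $\Phi_s^+(\tau_s)$ is nonzero, and it is isomorphic to $\tau$. For $r \neq s$ we have $\Phi_r^+(\tau_r)=0$. Applying $\Phi_r^-$ and using part (c), $\tau_r \simeq \Phi_r^-\Phi_r^+(\tau_r) = 0$. So all twisted Jacquet modules except $\tau_s$ vanish. Moreover $\tau_s \neq 0$, because otherwise $\tau = 0$. Finally, $\Phi_s^+(\tau_s) \simeq \tau$ is irreducible, so part (f) shows that $\tau_s$ is irreducible.

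For the converse, suppose $\tau_r = 0$ for $r \neq s$ and $\tau_s$ is irreducible. Since $\Phi_r^+(0)=0$, part (d) gives $\tau \simeq \Phi_s^+(\tau_s)$, and this is irreducible by part (f).

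There is no real obstacle. The only point needing care is the use of part (f) for the statement that $\sigma$ and $\Phi_r^+(\sigma)$ are irreducible simultaneously. If one prefers not to rely on the bijection-of-subrepresentations formulation there, the same conclusion follows from the adjunction in part (b) and part (c):
\[ \langle \chi_{\Phi_r^+(\sigma)}, \chi_{\Phi_r^+(\sigma)} \rangle_P = \langle \chi_\sigma, \chi_{\Phi_r^-\Phi_r^+(\sigma)} \rangle_{M_r} = \langle \chi_\sigma, \chi_\sigma \rangle_{M_r}. \]
A nonzero representation is irreducible exactly when its character has norm $1$, so $\sigma$ is irreducible if and only if $\Phi_r^+(\sigma)$ is.
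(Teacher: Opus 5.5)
Your proposal is correct and takes the paper's route. The paper's whole proof is that the corollary is clear from the equivalence ${\rm Rep}(P)\simeq\bigoplus_{r=0}^{\mu}{\rm Rep}(M_r)$ in part (d) of Theorem \ref{Phi+-_properties}, and you unpack exactly that equivalence via $\tau\simeq\bigoplus_r\Phi_r^+(\tau_r)$ and parts (c) and (f). Your fallback through the adjunction, $\langle \chi_{\Phi_r^+(\sigma)}, \chi_{\Phi_r^+(\sigma)}\rangle_P=\langle\chi_\sigma,\chi_\sigma\rangle_{M_r}$, is a clean way to get simultaneous irreducibility without relying on the rather terse proof of part (f).
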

 \begin{proof}
This is clear from the  equivalence between the categories   $\oplus_{r=0}^\mu {\rm Rep}(M_r)$  and ${\rm Rep}(P)$.
\end{proof}
We  illustrate Corollary \ref{Cat_equiv_corollary}
for $P_{n-1,n} =F^\times \times \Mir_n$. It suffices to consider representations $\tau = 1 \boxtimes \pi$ for $\pi \in{\rm Rep}(\Mir_n)$. Here $\Phi_0^-(\tau)=1 \boxtimes \Psi^-(\pi)$ and $\Phi_1^-(\tau)=1 \boxtimes \Phi^-(\pi)$. The corollary asserts that $\tau$ is  irreducible if either (i) $\pi=\Psi^+(\pi_0)$ for some $ \pi_0\in \widehat{G_{n-1}}$ or (ii) $\pi=\Phi^+(\pi_1)$ for some $ \pi_1\in \widehat{\Mir_{n-1}}$. In case (ii), we can repeat this procedure  to conclude that $\pi$ must be isomorphic to $(\Phi^+)^{r-1} \Psi^+(\pi_r)$ for $\pi_r \in \widehat{G_{n-r}}$, which is \cite[Corollary 5.13]{BerZel[2]}.

\subsection{Construction of $\Pi_{k,n}$} \label{sec_Pi_construction}
Continuing with the last paragraph, the representation $\tau^0:=(\Phi^+)^{n-1} \Psi^+(1) = (\Phi^+)^{n-1}(1)$ of $\Mir_n$ is irreducible, and by part (f) of Proposition 5.12 of \cite{BerZel[2]}, it is isomorphic to $\Ind_{U_n}^{\Mir_n} \vartheta$, where $U_n$ is the  unipotent subgroup of $\Mir_n$ consisting of upper triangular matrices with ones on the diagonal, and $\vartheta$ is the character of $U_n$ defined by $\vartheta(g)=\tilde\psi_0(g_{1,2}+ \dots+g_{n-1,n})$ with $\tilde\psi_0$ a non-trivial additive character of $F$. By \cite[5.18]{BerZel[2]}, the restriction to $\Mir_n$ of any cuspidal representation of $G_n$ is isomorphic to $(\Phi^+)^{n-1}(1)$.
It is to be noted that the symbol $(\Phi^+)^{n-1}$ actually stands for 
\[ {\rm Rep}(\Mir_1) \xrightarrow{\Phi^+} {\rm Rep}(\Mir_2) \xrightarrow{\Phi^+} \cdots \xrightarrow{\Phi^+} {\rm Rep}(\Mir_{n-1}) \xrightarrow{\Phi^+} {\rm Rep}(\Mir_n),\]
and hence $(\Phi^+)^{n-1}(1)$ is a  recursively defined representation.
In this section, we  construct a representation $\Pi_{k,n}$ (and a closely related representation $\Pi_{n-k,n}^\dagger$) of the parabolic group $P_{k,n}$ for $1 \leq k \leq n$, which will satisfy a universal property with respect to restrictions to $P_{k,n}$ of cuspidal representations of $G_n$ (see Theorem \ref{Pi_universal_property}).
For $k=n-1$, the representation $\Pi_{n-k, n}^\dagger$ of $P_{n-1,1}=F^\times \times \Mir_n$ will be shown to be isomorphic to  $1 \boxtimes (\Phi^{+})^{n-1}(1)$.\\

We take the  representation $\Pi_{n,n}$ of $P_{n,n} =G_n$ to be the trivial representation. For $1 \leq k<n$, using the categorical equivalence  ${\rm Rep}(P) \xrightleftharpoons[\fF^+]{\fF^-}\bigoplus_{r=0}^\mu {\rm Rep}(M_r)$ 
given in  part (d) of Theorem \ref{Phi+-_properties}, 
it suffices to define $\fF^-(\Pi_{k,n})$, i.e. the twisted Jacquet modules $\Phi_r^-(\Pi_{k,n})$.  For $g=\bbsm g_1&0\\0&g_2 \besm \in M_r$ with 
$g_1=\bbsm h_1 &  0\\
    h_5 & h_4\besm$ and $g_2=\bbsm
    h_1& h_3\\0&h_2 \besm$, let 
    \[  M_r \xrightarrow{p_2} P_{r,n-k}, \quad  M_r \xrightarrow{p_3} G_r,\]
    be the epimorphisms defined by 
    \[  p_2(g)=g_2, \quad  p_3(g)=h_1.\]
Any representation of $P_{r,n-k}$ can be inflated to a representation of $M_r$ via $p_2$. Let $\St_{G_r}$ be the representation of $P_{r,n-k}$ obtained by inflating the Steinberg representation  of $G_r$ via $p_3$. We recursively define
\[ \Phi_r^-(\Pi_{k,n})= \begin{cases}
     0 &\text{ if $r=0$}\\
\St_{G_r} \otimes \Pi_{r,n-k} &\text{ if $r>0$}.     
\end{cases}\]
Here $\St_{G_r}$ and  $\Pi_{r,n-k}$ are inflated to $M_r$ via the epimorphisms $p_3$ and $p_2$ respectively. Having defined $\fF^-(\Pi_{k,n})$, we  obtain $\Pi_{k,n}$ as $\fF^+ \fF^-(\Pi_{k,n})$:
\begin{definition} \label{def_Pi} The representation $\Pi_{k,n}$ of $P_{k,n}$ for $1 \leq k \leq n$ is the trivial representation of $P_{n,n}$ in case $k=n$, and in case $1 \leq k \leq n-1$, it is defined by
\[\Pi_{k,n}=\bigoplus_{r=1}^{\text{min}\{k,n-k\}} \Ind_{P_{k,n}(r)}^{P_{k,n}} ( \psi_r \otimes [\St_{G_r} \otimes \Pi_{r,n-k}])
    \]
    where $P_{k,n}(r)=M_r \ltimes N_{k,n}, N_{k,n} \simeq \M_{k \times n-k}(F).$
\end{definition}
The recursion terminates in exactly  $(n-k)$  steps. To see this, we induct on the `length' len$(\Pi_{k,n}):=n-k$. If len$(\Pi_{k,n})=0$ i.e. $n=k$, we have $\Pi_{n,n}=1_{P_{n,n}}$ and we do not use the recursion.  For $\Pi_{k,n}$ with $(n-k)=\ell$, we assume inductively that that the recursion for $\Pi_{i,j}$ terminates in $(j-i)$ steps if $(j-i)<\ell$.  It follows that  the representations $\Pi_{r,n-k}$ appearing in the first  recursion for $\Pi_{k,n}$, in turn  require $(n-k-r)$ steps, because $r \geq 1$ and hence $n-k-r=\ell-r <\ell$. In particular, the recursion for $\Pi_{1,n-k}$ requires $\ell-1=(n-k-1)$ steps. Therefore, we conclude that the recursion for $\Pi_{k,n}$ terminates in $(n-k)$ steps. \\
 
 The  simplest example of $\Pi_{k,n}$ after $\Pi_{n,n}$, is  $\Pi_{n-1,n}$. We use the functor $\Phi^+ : {\rm Rep}(\Mir_{n-1}) \to {\rm Rep}(\Mir_n)$ (see the discussion after Definition \ref{Phi_r_def}). 
\begin{lemma} \label{Pi_n-1_n}
The representation $\Pi_{n-1,n}$ of $P_{n-1,n}=F^\times \times \Mir_n$ has dimension $(q^{n-1}-1)$ and  equals $1_{F^\times} \boxtimes \Phi^+(1_{\Mir_{n-1}})$. 
\end{lemma}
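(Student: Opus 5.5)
Unwind Definition \ref{def_Pi} in the case $k=n-1$, and use the dictionary between $\Phi_r^{\pm}$ and the Bernstein--Zelevinsky functors given after Definition \ref{Phi_r_def}.

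For $k=n-1$ we have $\mu=\min\{n-1,1\}=1$, so the sum in Definition \ref{def_Pi} has only the term $r=1$:
\[\Pi_{n-1,n}=\Phi_1^+\bigl(\St_{G_1}\otimes \Pi_{1,1}\bigr).\]
Here $\St_{G_1}$ is the trivial character of $G_1=F^\times$, and $\Pi_{1,1}$ is the trivial representation of $P_{1,1}=G_1$. Hence the input $\sigma=\St_{G_1}\otimes\Pi_{1,1}$ is the trivial representation $1_{M_1}$ of $M_1$. In the block description of $M_r$, with $r=1$, $k=n-1$ and $n-k=1$, the blocks $h_2,h_3$ are empty. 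Thus $g_2=h_1$, and $M_1$ consists of the matrices $\bbsm h_1 & 0 & 0\\ h_5 & h_4 & 0\\ 0&0&h_1\besm$.

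Next I would identify $M_1\simeq F^\times\times \Mir_{n-1}$ explicitly. Factor out the scalar $h_1 I_n$, which lies in the center. What remains is a matrix $\bbsm 1 & 0\\ h_1^{-1}h_5 & h_1^{-1}h_4\besm\oplus 1$. After conjugating by the permutation/inverse-transpose used to identify $P_{n-1,n}\simeq F^\times\times \Mir_n$, this becomes an element of $\Mir_{n-1}$. Under this identification $1_{M_1}=1_{F^\times}\boxtimes 1_{\Mir_{n-1}}$. By the stated relation $\Phi_1^+=1\boxtimes \Phi^+$, we then get
\[\Pi_{n-1,n}=1_{F^\times}\boxtimes \Phi^+(1_{\Mir_{n-1}}).\]
The main obstacle is checking that the identification of $P_{n-1,n}$ with $F^\times\times\Mir_n$ is compatible with the identification of $M_1$ and with the character $\psi_1$. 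The paper asserts this compatibility, so I would simply invoke it. If necessary, I would check by hand that the subgroup $P(1)=N\rtimes M_1$ corresponds to $F^\times\times(\Mir_{n-1}\ltimes F^{n-1})$ and that $\psi_1$ corresponds to the Bernstein--Zelevinsky character.

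For the dimension I would not need any of this, since it follows from the index directly:
\[\dim\Pi_{n-1,n}=[P:P(1)]=[M:M_1].\]
We have $|M|=|G_{n-1}|(q-1)$. The group $M_1$ is isomorphic to the stabilizer in $G_{n-1}$ of a nonzero vector (equivalently a mirabolic-type subgroup), times $F^\times$. Therefore
\[[M:M_1]=\#\{\text{nonzero vectors in }F^{n-1}\}=q^{n-1}-1.\]
An alternative route is orbit–stabilizer: $M$ acts transitively on the rank-one characters $\psi_A$ with $A\in \M_{(n-1)\times 1}(F)$ nonzero, and there are $q^{n-1}-1$ of them.
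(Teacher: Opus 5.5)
Your proposal is correct and is essentially the paper's argument: only the $r=1$ term of Definition \ref{def_Pi} survives, its inflated input $\St_{G_1}\otimes\Pi_{1,1}$ is trivial, and $P_{n-1,n}(1)$ is matched with $F^\times\times(\Mir_{n-1}\ltimes F^{n-1})$, so the induced representation is $1_{F^\times}\boxtimes\Phi^+(1_{\Mir_{n-1}})$ of dimension $[P:P(1)]=q^{n-1}-1$. One small correction: no inverse-transpose is involved, since $P_{n-1,n}=F^\times\times\Mir_n$ holds literally, and matching the non-scalar part of $M_1$ (first row $e_1^\top$) and $\psi_1(v)=\psi_0(v_1)$ with the standard $\Mir_{n-1}$ and Bernstein--Zelevinsky character only requires conjugating by a permutation matrix $\mathrm{diag}(w,1)\in P_{n-1,n}$ with $we_1=e_{n-1}$, which does not change the induced representation up to isomorphism.
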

\begin{proof}
By definition \ref{def_Pi}, we have $\Pi_{n-1,n}=\Ind_{P_{n-1,n}(1)}^{P_{n-1,n}} (\psi_1 \otimes [\St_{G_1} \otimes \Pi_{1,1}])$. Since $\St_{G_1} = 1_{G_1}$ and $\Pi_{1,1}=1_{P_{1,1}}$ are trivial representations, it follows that $[\St_{G_1} \otimes \Pi_{1,1}]$, viewed as a representation of $P_{n-1,n}(1)$,  is the trivial representation.
We recall that  $P_{n-1,n}=F^\times \times \Mir_n$ and $P_{n-1,n}(1)=F^\times \times (\Mir_{n-1} \ltimes F^{n-1})$, where $F^\times$ is the group of scalar matrices in $G_n$, and \begin{align*}
    \Mir_n&=\{ \bbsm h & v \\ & 1 \besm \in P_{n-1,n} \colon h \in G_{n-1}, v \in F^{n-1}\}, \\
\Mir_{n-1} \ltimes F^{n-1} &= \{ \bbsm h & v\\ & 1 \besm \in \Mir_n \colon h \in \Mir_{n-1}\}.
\end{align*}
Therefore,
\[ \Pi_{n-1,n}=1_{F^\times} \boxtimes \Ind_{\Mir_{n-1} \ltimes F^{n-1}}^{\Mir_n} (\psi_1 \otimes 1_{\Mir_{n-1}}). \]
In particular, 
\[ \dim(\Pi_{n-1,n})=|\Mir_n|/|\Mir_{n-1} \ltimes F^{n-1}|=(q^{n-1}-1).\]
For a representation $\tau$ of $\Mir_{n-1}$, we recall that 
$\Phi^+(\tau)$ is the representation of $\Mir_n$ given by $\Ind_{\Mir_{n-1} \ltimes F^{n-1}}^{\Mir_n} (\psi_1 \otimes \tau)$. Thus, we conclude that $\Pi_{n-1,n}=1_{F^\times} \boxtimes \Phi^+(1_{\Mir_{n-1}})$.
 \end{proof}

The representation $\Pi_{1,n}$ of $P_{1,n}$ can also be described in terms of the $\Phi^+$ functor. We start with  a definition:
\begin{definition}
\label{dagger_def}
Let $\dagger: G_n \to G_n$ be the involutive automorphism  given by 
\[ g^\dagger = \bbsm &&&1 \\
&&1&\\
&\iddots&&\\
1&&&
\besm g^{-\top} \bbsm &&&1 \\
&&1&\\
&\iddots&&\\
1&&&
\besm. \]
\end{definition}
For a subgroup $H$ of $G_n$, we denote $\dagger(H)$ by $H^\dagger$, and for a representation $\pi$ of $H$, the representation $\pi \circ \dagger$ of $H^\dagger$ will be denoted $\pi^\dagger$. We note that $P_{k,n}^\dagger = P_{n-k,n}$ for $1 \leq k \leq n-1$.
For a cuspidal representation $\pi_\theta$ of $G_n$, we claim that $\pi_\theta^\dagger$ is isomorphic to $\pi_{\bar \theta}$
 where $\bar \theta$ is the complex conjugate of the regular character $\theta$ of $F_n^\times$, i.e., 
\begin{equation} \label{eq:pi_theta_dagger}
\pi_\theta^\dagger \simeq \pi_{\bar \theta}.
\end{equation} 
To see this, we note that the character  $\chi_{\pi_\theta^\dagger}(g)=\chi_{\pi_\theta}(g^{-\top})=\Theta_{\theta}(g^{-\top})$ because $g^\dagger$ is conjugate to $g^{-\top}$ by definition. Since $g^\top$ is conjugate to $g$ for all $g \in \GL(n,K)$ (for an arbitrary field $K$), it follows that $\chi_{\pi_\theta^\dagger}(g)=
\Theta_{\theta}(g^{-1})$.
It follows from Theorem \ref{character value of cuspidal representation (Dipendra)}, that $\Theta_{\theta}(g^{-1})=\Theta_{\bar \theta}(g)$, because the eigenvalues of $g^{-1}$ are  the reciprocals of  the eigenvalues of $g$, and $\theta(\phi^i(\lambda^{-1}))=\bar\theta(\phi^i(\lambda))$.  Therefore, $\pi_\theta^\dagger$ is isomorphic to $\pi_{\bar \theta}$.\\

Since $P_{n-1,n}=F^\times \times \Mir_n$, we get $P_{1,n}=P_{n-1,n}^\dagger=F^\times \times \Mir_n^\dagger$, where $F^\times$ is the group of scalar matrices in $G_n$, and
\[ \Mir_n^\dagger = \{\bbsm 1 & v\\ & h \besm  \colon v \in M_{1 \times n-1}(F), h \in G_{n-1} \}. \]
Similarly, 
\[P_{1,n}(1)=F^\times \times (\Mir_{n-1} \ltimes F^{n-1} )^\dagger.\]
\begin{lemma} \label{Pi_1_n_Mir} The representations
$\Pi_{1,n}$ of $P_{1,n}=F^\times \times \Mir_n^\dagger$, and 
$\Pi_{1,n}^\dagger$ of $P_{n-1,n}=F^\times \times \Mir_n$ have dimension $(q-1)(q^2-1) \cdots(q^{n-1}-1)$ and are isomorphic to:
\begin{enumerate}
    \item[\emph{(1)}]  $\Pi_{1,n}^\dagger= 1_{F^\times} \boxtimes (\Phi^+)^{n-1}(1_{\Mir_1})$.
    \item[\emph{(2)}] $\Pi_{1,n}=1_{F^\times} \boxtimes \Ind_{U_n}^{\Mir_n^\dagger}(\vartheta)$, where $U_n$ is the  unipotent subgroup of $\Mir_n^\dagger$ consisting of upper triangular matrices with ones on the diagonal, and $\vartheta(g)=\psi_0(g_{1,2}+ \dots+g_{n-1,n})$ with $\psi_0$ a non-trivial additive character of $F$.
    \end{enumerate}
\end{lemma}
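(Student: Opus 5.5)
We would argue by induction on $n$, first for part (1), and then deduce part (2) by applying $\dagger$.

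For $n=1$ both sides are the trivial representation of $F^\times=G_1$. For $n\geq 2$, Definition \ref{def_Pi} with $k=1$ has $\mu=\min\{1,n-1\}=1$, so only $r=1$ contributes:
\[\Pi_{1,n}=\Ind_{P_{1,n}(1)}^{P_{1,n}}\big(\psi_1\otimes[\St_{G_1}\otimes\Pi_{1,n-1}]\big).\]
Here $\St_{G_1}=1$. In $M_1$ the block $g_1=h_1\in G_1$ coincides with the $(1,1)$ entry of $g_2\in P_{1,n-1}$, so $M_1\simeq P_{1,n-1}$ and the inflated representation is just $\Pi_{1,n-1}$ pulled back along $p_2$.

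Now apply $\dagger$. Since $\pi\mapsto\pi\circ\dagger$ commutes with induction and $P_{1,n}^\dagger=P_{n-1,n}$, we get
\[\Pi_{1,n}^\dagger=\Ind_{P_{1,n}(1)^\dagger}^{P_{n-1,n}}\big(\psi_1^\dagger\otimes(\Pi_{1,n-1}\circ p_2\circ\dagger)\big).\]
We need to check three identifications, using $P_{1,n}(1)^\dagger=F^\times\times(\Mir_{n-1}\ltimes F^{n-1})$ as recorded above:
\begin{enumerate}
\item[(i)] $\psi_1^\dagger$ is a rank-one character of the unipotent radical of $P_{n-1,n}$ that is stabilized by $\Mir_{n-1}\ltimes F^{n-1}$, so it agrees with the $\psi_1$ used for $\Phi^+$, up to conjugation by an element normalizing everything.
\item[(ii)] The map $p_2\circ\dagger$, restricted to the $\Mir_{n-1}$ factor, is the map $\Mir_{n-1}\to P_{n-2,n-1}$ conjugate to $\dagger_{n-1}$ on $P_{1,n-1}$. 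Hence the inflated representation is $\Pi_{1,n-1}^\dagger$ restricted to $\Mir_{n-1}$, and by induction this equals $(\Phi^+)^{n-2}(1)$.
\item[(iii)] The scalar factor $F^\times$ acts trivially. This holds because the central $F^\times$ lies in $P_{1,n}(1)$ and acts trivially on $\Pi_{1,n-1}$ (inductively $1_{F^\times}\boxtimes\cdots$) and on $\psi_1$.
\end{enumerate}
These give $\Pi_{1,n}^\dagger=1_{F^\times}\boxtimes\Phi^+\big((\Phi^+)^{n-2}(1)\big)$, which is part (1). I expect the main obstacle to be (i)–(ii): tracking exactly how the antidiagonal conjugation and the inverse transpose carry $P_{1,n}(1)$, $\psi_1$ and $p_2$ onto the Bernstein–Zelevinsky data. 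Any mismatch should be absorbed by a conjugation, which does not change isomorphism classes of induced representations.

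For the dimension, each step multiplies by the index
\[[P_{1,n}:P_{1,n}(1)]=|\Mir_n|/|\Mir_{n-1}\ltimes F^{n-1}|=q^{n-1}-1,\]
as in Lemma \ref{Pi_n-1_n}, giving $\prod_{i=1}^{n-1}(q^i-1)$.

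For part (2), recall that $(\Phi^+)^{n-1}(1)\simeq\Ind_{U_n}^{\Mir_n}\vartheta$ by \cite[5.12(f)]{BerZel[2]}, where the text invokes this fact. Apply $\dagger$, which is an involution. It maps the upper unitriangular group $U_n$ to itself, since inverse transpose followed by antidiagonal conjugation preserves upper unitriangularity. The character $\vartheta$ becomes $g\mapsto\psi_0(-(g_{n-1,n}+\dots+g_{1,2}))$, which is a generic character of $U_n$. Conjugating by a diagonal matrix in $\Mir_n^\dagger$ (for instance $\mathrm{diag}(1,-1,1,\dots)$) carries it to the $\vartheta$ of the statement. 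Since induction is invariant under conjugation, this gives $\Pi_{1,n}=1_{F^\times}\boxtimes\Ind_{U_n}^{\Mir_n^\dagger}\vartheta$.
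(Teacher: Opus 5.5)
Your proposal is correct and follows essentially the same route as the paper: induction through the one-term recursion $\Pi_{1,n}=\Ind_{P_{1,n}(1)}^{P_{1,n}}(\psi_1\otimes\Pi_{1,n-1})$, then transporting $P_{1,n}(1)$, $\psi_1$ and the inflation through $\dagger$. In fact no conjugation is needed for the first two of your checks: $P_{1,n}(1)^\dagger$ is exactly $F^\times\times(\Mir_{n-1}\ltimes F^{n-1})$, and $p_2\circ\dagger$ is exactly $\dagger$ applied to the upper-left $(n-1)\times(n-1)$ block (its target is $P_{1,n-1}$, not $P_{n-2,n-1}$ as you wrote in (ii)). Moreover $\psi_1^\dagger$ is the Bernstein--Zelevinsky character built from $\bar\psi_0$, so the only adjustment is the harmless passage from $\bar\psi_0$ to $\psi_0$. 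For part (2), the paper avoids your diagonal conjugation by choosing $\tilde\psi_0=\bar\psi_0$ in $(\Phi^+)^{n-1}(1)\simeq\Ind_{U_n}^{\Mir_n}\vartheta$, which amounts to the same thing.
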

\begin{proof}
    (1) We prove this by induction on $n$ starting with the base case $n=2$. By Lemma \ref{Pi_n-1_n} we have
    \[ \Pi_{1,2} = 1_{F^\times} \boxtimes \Ind_{\Mir_1 \ltimes F}^{\Mir_2} (\psi_1 \otimes 1_{\Mir_1}) = \Ind_{F^\times \times (\Mir_1 \ltimes F)}^{F^\times \times  \Mir_2} (1_{F^\times} \boxtimes \psi_1).\] We note that the subgroup of $G_2$ given by  \[ F^\times \times (\Mir_1 \ltimes F) = \{\bbsm a & b\\&a \besm  \colon a \in F^\times, b \in F\},\]  is preserved by the $\dagger$ involution. Also, $F^\times \times \Mir_2 = P_{1,2}$ and hence $(F^\times \times \Mir_2)^\dagger = P_{1,2}^\dagger = P_{1,2}$. Therefore,
    \[ 
        \Pi_{1,2}^\dagger = \Ind_{F^\times \times (\Mir_1 \ltimes F)}^{P_{1,2}} (1_{F^\times} \boxtimes \bar{\psi_1}) =1_{F^\times} \boxtimes \Ind_{\Mir_1 \ltimes F}^{\Mir_2} (\bar{\psi_1}) \simeq 1_{F^\times} \boxtimes \Phi^+(1_{\Mir_1}).
    \]
    We assume inductively that $\Pi_{1,n-1}^\dagger= 1_{F^\times} \boxtimes (\Phi^+)^{n-2}(1_{\Mir_1})$. By definition, we have   $ \Pi_{1,n} = \Ind_{P_{1,n}(1)}^{P_{1,n}} (\psi_1 \otimes \Pi_{1,n-1})$. Using  
     $P_{1,n}=F^\times \times \Mir_n^\dagger$ and $P_{1,n}(1)^\dagger=F^\times \times (\Mir_{n-1} \ltimes F^{n-1} )$, we get:
     \begin{align*} \Pi_{1,n}^\dagger &=  \Ind_{F^\times \times (\Mir_{n-1} \ltimes F^{n-1} )}^{F^\times \times \Mir_n}(1_{F^\times} \boxtimes (\bar{\psi_1} \otimes (\Phi^+)^{n-2}(1_{\Mir_1}) ))\\&=1_{F^\times} \boxtimes \Ind_{\Mir_{n-1} \ltimes F^{n-1} }^{\Mir_n} (\bar{\psi_1} \otimes (\Phi^+)^{n-2}(1_{\Mir_1})) \\
     &\simeq  1_{F^\times} \boxtimes (\Phi^+)^{n-1}(1_{\Mir_1}).
     \end{align*}
    
    (2) As mentioned at the beginning of the current subsection,  the representation $(\Phi^+)^{n-1}(1_{\Mir_1})$ of $\Mir_n$ is isomorphic to  $\Ind_{U_n}^{\Mir_n}(\vartheta)$ where $\vartheta(g) = \tilde\psi_0(g_{1,2}+ \dots+g_{n-1,n})$, and  $\tilde\psi_0$ is any non-trivial additive character  of $F$. We take $\tilde\psi_0=\bar{\psi_0}$. Therefore, we have 
    \[\Pi_{1,n}^\dagger= 1_{F^\times} \boxtimes \Ind_{U_n}^{\Mir_n}(\vartheta).\]
    Since $U_n^\dagger=U_n$, we get
\[\Pi_{1,n}= 1_{F^\times} \boxtimes \Ind_{U_n}^{\Mir_n^\dagger }(\bar{\vartheta}),\]
    where $(\bar \vartheta)(g) = \psi_0(g_{1,2}+ \dots+g_{n-1,n})$.
    \end{proof}

The next result gives the dimension of $\Pi_{k,n}$:
\begin{lemma} \label{dim_Pi_k_n}
    $\dim \Pi_{k,n}= (q^k-1)(q^{k+1}-1) \cdots (q^{n-1}-1)=\prod_{i=k}^{n-1}(q^i-1)$.
\end{lemma}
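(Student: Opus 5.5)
We argue by strong induction on the length $\ell = n-k$, using the recursive Definition \ref{def_Pi}. The base case $k=n$ is immediate: $\Pi_{n,n}$ is trivial of dimension $1$, which matches the empty product. For $1\le k\le n-1$, the decomposition in Definition \ref{def_Pi} gives
\[
\dim \Pi_{k,n}=\sum_{r=1}^{\mu}[P_{k,n}:P_{k,n}(r)]\cdot \dim\St_{G_r}\cdot \dim \Pi_{r,n-k},
\]
where $\mu=\min\{k,n-k\}$. We have $\dim\St_{G_r}=q^{\binom r2}$, and by induction $\dim\Pi_{r,n-k}=\prod_{i=r}^{n-k-1}(q^i-1)$, since its length $n-k-r$ is smaller than $\ell$.

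Next we compute the index. Since $P_{k,n}(r)=M_r\ltimes N$, the index equals $[M:M_r]$, which is the size of the $M$-orbit of $\psi_r$ in $\widehat N$. That orbit is the set of rank $r$ matrices in $\M_{k\times n-k}(F)$, so the index is $a(k\times (n-k),r,q)$. Substituting, the claim reduces to the identity
\[
\prod_{i=k}^{n-1}(q^i-1)=\sum_{r\ge 1} a(k\times(n-k),r,q)\,q^{\binom r2}\prod_{i=r}^{n-k-1}(q^i-1).
\]

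To prove this identity we use Lemma \ref{amnr_lemma}. Write $m=n-k$ and $\prod_{i=r}^{m-1}(q^i-1)=(-1)^{m-r}q^{\binom m2-\binom r2}(q^{-(m-1)};q)_{m-r}$. The factor $q^{\binom r2}$ then cancels, and the right-hand side becomes
\[
(-1)^m q^{\binom m2}\sum_r a(k\times m,r,q)(-1)^r(S;q)_{m-r},\qquad S=q^{1-m}.
\]
The sign $(-1)^r$ obstructs a direct application of Lemma \ref{amnr_lemma}. We plan two ways around it.

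The first route is to check that the $r=0$ term vanishes automatically, because $(q^{1-m};q)_m$ contains the factor $1-q^{1-m}q^{m-1}=0$. The sum over $r\ge1$ therefore equals the full sum over $r$. We then need a signed variant of Lemma \ref{amnr_lemma}, obtained by the same counting argument: count full-rank matrices $[X_1\ X_2]$ weighted appropriately.

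The second, cleaner route is to count directly. We aim to show that $\sum_r a(k\times m,r,q)\,q^{\binom r2}\prod_{i=r}^{m-1}(q^i-1)$ counts pairs in a way that telescopes to $\prod_{i=k}^{n-1}(q^i-1)$. For instance, note that $q^{\binom r2}\prod_{i=1}^{r-1}(q^i-1)\cdot\prod_{i=r}^{m-1}(q^i-1)/|G_r|$ simplifies, using $a=\stirling{k}{r}_q\stirling{m}{r}_q|G_r|$, to Gaussian binomials times $\prod_{i=1}^{m-1}(q^i-1)$.

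Alternatively, we can prove the identity by induction on $m$ using the $q$-Pascal recursion for $\stirling{m}{r}_q$. This is the step we expect to be the main obstacle. We would first try to derive it from Lemma \ref{id_T} together with the $q$-binomial theorem \eqref{eq:qbinthm}: expand $\prod_{i=k}^{n-1}(q^i-1)=(-1)^m(q^k;q)_m$ via \eqref{eq:qbinthm} and compare coefficients with the Gaussian-binomial form of the right-hand side.
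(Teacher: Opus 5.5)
\textbf{There is a genuine gap in the final step.} Your reduction is correct and matches the paper. The index $[P_{k,n}:P_{k,n}(r)]=[M:M_r]$ is the size of the $M$-orbit of $\psi_r$, namely $a(k\times(n-k),r,q)$; this orbit--stabilizer argument is slicker than the paper's coset count. So the lemma comes down to the identity
\[
\prod_{i=k}^{n-1}(q^i-1)=\sum_r a(k\times m,r,q)\,q^{\binom r2}\prod_{i=r}^{m-1}(q^i-1), \qquad m=n-k.
\]
Proving this identity is the actual content of the lemma, and you do not prove it.

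The derailment comes from an algebra slip. Your conversion $\prod_{i=r}^{m-1}(q^i-1)=(-1)^{m-r}q^{\binom m2-\binom r2}(q^{1-m};q)_{m-r}$ is false: already for $m=2$, $r=1$ its right side is $-(q-1)$. Since $q^j-1=q^j(1-q^{-j})$ involves no sign change, the correct form is
\[
q^{\binom r2}\prod_{i=r}^{m-1}(q^i-1)=q^{\binom m2}\,(q^{1-m};q)_{m-r}.
\]
So the ``obstructing'' factor $(-1)^r$ does not exist. Your first route would in fact be aiming at a false identity: for $k=1$, $n=3$ your signed right-hand side evaluates to $-(q-1)(q^2-1)$. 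The other routes (a telescoping count, $q$-Pascal induction, coefficient comparison) are only sketches, so the identity is never established.

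With the sign corrected, Lemma~\ref{amnr_lemma} applies directly, which is exactly the paper's argument. Apply it to $k\times(n-k)$ matrices (its $m,n$ replaced by $k,n-k$) at $S=q^{k+1-n}$, so that $Sq^{-k}=q^{1-n}$. This gives
\[
\sum_{r}a(k\times(n-k),r,q)\,q^{\binom r2}\prod_{i=r}^{n-k-1}(q^i-1)=q^{\binom{n-k}{2}+k(n-k)}\,(q^{1-n};q)_{n-k}=\prod_{i=k}^{n-1}(q^i-1).
\]
The last equality uses $1-q^{-j}=q^{-j}(q^j-1)$ together with $\binom{n-k}{2}+k(n-k)=\binom n2-\binom k2$.

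Your observation that the $r=0$ term vanishes is correct. It is what lets you sum over all $r$, as Lemma~\ref{amnr_lemma} requires; terms with $r>\min\{k,n-k\}$ vanish because $a(k\times(n-k),r,q)=0$.
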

\begin{proof}
   If $k=n$ we take the above product to be trivial, so that dim$(\Pi_{k,n})=1$. We prove the formula by induction on $n$. For the base case $n=1$, we must have $k=1$ and hence  dim$(\Pi_{1,1})=1$. For $n>1$, the dimension of the right-side of the recursive formula for $(\Pi_{k,n})$ is  
   \[\sum_r \tfrac{|P_{k,n}|}{|P_{k,n}(r)|} \dim(\St_{G_r}) \dim(\Pi_{r,n-k}).\]
   It is easily seen that 
   \[        \tfrac{|P_{k,n}|}{|P_{k,n}(r)|}=\tfrac{|G_k \times G_{n-k}|}{|M_r|}=\tfrac{|G_k \times G_{n-k}|}{|P^*_{r,k} \times P_{r,n-k}|} \tfrac{|P^*_{r,k} \times P_{r,n-k}|}{|M_r|}
=\stirling{k}{r}_q \stirling{n-k}{r}_q |G_r|=a(k \times n-k,r,q)\]
where, in the first equality we have used the fact that 
   $P_{k,n}(r)=M_{r} \ltimes N_{k,n}$ and $P_{k,n}=(G_k \times G_{n-k})  \ltimes N_{k,n}$, and in the third equality we have used the fact that $|G_k|/|P^*_{r,k}|$ equals the number $\stirling{k}{k-r}_q=\stirling{k}{r}_q$ of $(k-r)$-dimensional subspaces of $F^k$, and $|G_{n-k}|/|P_{r,n-k}|$ equals the number $\stirling{n-k}{r}_q$ of $r$-dimensional subspaces of $F^{n-k}$.
Together with the inductive hypothesis dim$(\Pi_{r,n-k})=\prod_{i=r}^{n-k-1}(q^i-1)$, and the fact that dim$(\St_{G_r})=q^{\binom{r}{2}}$, the recursive formula for dim$(\Pi_{k,n})$    
 simplifies to 
\[\sum_r  a(k \times n-k,r,q)  q^{\tbinom{r}{2}} \prod_{i=r}^{n-k-1}(q^i-1)= q^{\binom{n-k}{2}} \sum_r  a(k \times n-k,r,q) (q^{-n+k+1};q)_{n-k-r}.  \]
By Lemma \ref{amnr_lemma}, this  equals
\[ q^{\binom{n-k}{2}} q^{k(n-k)} (q^{-n+1};q)_{n-k}=(q^k-1)(q^{k+1}-1)\cdots (q^{n-1}-1).\]
\end{proof}

\begin{corollary} \label{Pi_k_n_irreducibility}
    The representation $\Pi_{k,n}$ of $P_{k,n}$ is irreducible if and only if $k \in \{1,n-1,n\}$.
\end{corollary}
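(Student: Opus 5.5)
We will use Corollary \ref{Cat_equiv_corollary}: $\Pi_{k,n}$ is irreducible if and only if exactly one of its twisted Jacquet modules $\Phi_r^-(\Pi_{k,n})$ is nonzero and that one is irreducible. By Definition \ref{def_Pi} and part (c) of Theorem \ref{Phi+-_properties}, $\Phi_0^-(\Pi_{k,n})=0$ and $\Phi_r^-(\Pi_{k,n})=\St_{G_r}\otimes \Pi_{r,n-k}$ for $1\le r\le \mu=\min\{k,n-k\}$. Each of these is nonzero, since by Lemma \ref{dim_Pi_k_n} it has dimension $q^{\binom r2}\prod_{i=r}^{n-k-1}(q^i-1)>0$. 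So irreducibility forces $\mu=1$, i.e.\ $k=1$ or $k=n-1$. The case $k=n$ is trivial, since $\Pi_{n,n}$ is the trivial representation.

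For the converse, the cases $k\in\{1,n-1\}$ follow directly from results already proved. For $k=n-1$ the only Jacquet module is $\Phi_1^-=\St_{G_1}\otimes\Pi_{1,1}$, which is one-dimensional and hence irreducible, so $\Pi_{n-1,n}$ is irreducible; alternatively, Lemma \ref{Pi_n-1_n} identifies it with $1\boxtimes\Phi^+(1)$, which is irreducible by part (f) of Theorem \ref{Phi+-_properties}. For $k=1$ the only Jacquet module is $\St_{G_1}\otimes\Pi_{1,n-1}=\Pi_{1,n-1}$, inflated to $M_1$ via $p_2$. Its irreducibility follows by induction on $n$ using the same criterion, with the base case $\Pi_{1,1}$ trivial. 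Equivalently, Lemma \ref{Pi_1_n_Mir} gives $\Pi_{1,n}^\dagger=1\boxtimes(\Phi^+)^{n-1}(1)$, which is irreducible by repeated application of part (f). One should also check that inflation along the surjection $p_2$ preserves irreducibility, which is clear.

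The only point requiring care is the ``only if'' direction when $2\le k\le n-2$. Here $\mu\ge 2$, so at least the two summands $r=1,2$ are nonzero, and the positivity of their dimensions is the key fact. That positivity is immediate from the product formula, since $q^i-1>0$ for $i\ge1$. Thus there is no real obstacle: the main step is simply recognising that Corollary \ref{Cat_equiv_corollary} reduces the question to counting the nonzero $\Phi_r^-(\Pi_{k,n})$.
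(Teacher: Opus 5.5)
Your proof is correct and follows essentially the same route as the paper. Both reduce to Corollary~\ref{Cat_equiv_corollary}, use Lemma~\ref{dim_Pi_k_n} to see that all $\mu$ twisted Jacquet modules are nonzero (forcing $\mu=1$), and then settle $k=n-1$ via the one-dimensional module and $k=1$ by induction on $n$; your remark that inflation along the surjection $p_2$ preserves irreducibility is a point the paper leaves implicit.
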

\begin{proof}
    If $k=n$, then $\Pi_{k,n}$ is irreducible as it is the trivial representation. We now assume $1 \leq k \leq n-1$. Let $\mu= \min\{k,n-k\}$. The twisted Jacquet modules $(\tau_1, \dots, \tau_\mu)$ of  $\tau=\Pi_{k,n}$ are given by  $\tau_r \simeq \St_{G_r} \otimes \Pi_{r,n-k}$. By Lemma \ref{dim_Pi_k_n}, these modules are non-zero as their dimension is positive. Therefore, by Corollary \ref{Cat_equiv_corollary}, $\Pi_{k,n}$ is irreducible if and only if  $\mu=1$, i.e., $k \in \{1, n-1\}$, and $\tau_1$ is irreducible. If $k=n-1$, we have $\tau_1 \simeq \Pi_{1,1}$ which is trivial and hence irreducible. As for $k=1$, we prove that $\Pi_{1,n}$ is irreducible by induction on $n$. If $n=1$, then $\Pi_{1,1}$ is trivial and hence irreducible. We assume inductively that $\Pi_{1,n-1}$ is irreducible. The twisted Jacquet module $\tau_1$ for $\tau=\Pi_{1,n}$ is isomorphic to $\Pi_{1,n-1}$, which is irreducible by the inductive hypothesis. This completes the proof that $\Pi_{1,n}$ is irreducible. 
\end{proof}

In the  next result,  we determine the character of $\Pi_{k,n}$ 
\underline{only for the special type} of matrices $g = \bbsm g_1 & g_1 g_3 \\ 0 & g_2 \besm$ for which the semisimple part of $g_1 \in G_k$ comes from $F_k^\times$. We denote the character of $\Pi_{k,n}$ as $\Theta_{k,n}$.
\begin{theorem} \label{Pi_k_n_char}
    Let  $g = \bbsm g_1 & g_1 g_3\\0& g_2 \besm \in P_{k,n}$. If the semisimple part of $g_1 \in G_k$   comes from $F_k^\times$, then the character $\Theta_{k,n}$ of $\Pi_{k,n}$ at $g$ is zero, unless the semisimple part of $g$ comes from $F_d^\times$ where $d = \text{gcd}\{k,n\}$, in which case
    \[\Theta_{k,n}(g)=(-1)^{n-k} ({q'}^{t(g_1)};q')_{t(g)-t(g_1)}, \]
    where $F[\lambda]=F_m \subset F_d$ for any eigenvalue $\lambda$ of $g$, $q'=q^m$ and $t(g)=\dim_{F_n}\ker(g-\lambda I_n)$ and $t(g_1)=\dim_{F_k}\ker(g_1-\lambda I_k)$.
\end{theorem}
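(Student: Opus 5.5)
We argue by strong induction on the length $n-k$, computing the character of $\Pi_{k,n}$ directly from Definition~\ref{def_Pi}. The base case $k=n$ is immediate: $\Pi_{n,n}$ is trivial, $g=g_1$, $t(g)=t(g_1)$, and the empty Pochhammer product gives $1$. For $k<n$, the Frobenius formula for induced characters gives
\[ \Theta_{k,n}(g)=\sum_{r\ge1}\frac{1}{|P_{k,n}(r)|}\sum_{x\in P_{k,n},\ x^{-1}gx\in P_{k,n}(r)} \psi_r(x^{-1}gx)\,\St_{G_r}(h_1)\,\Theta_{r,n-k}(g_2'), \]
where $h_1,g_2'$ are the blocks of $x^{-1}gx$ as in Definition~\ref{def_Mpsi}. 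It is cleaner to write this as a sum over $M$-conjugates $(y,A)$, with $A$ a rank $r$ matrix stabilized by the Levi part of $g$. Write $g=mn$ with $m=\mathrm{diag}(g_1,g_2)$. Conjugating by $N$ and averaging $\psi_A$ over the $N$-component produces an additive character sum. This localizes to those $A$ with $g_1^{-\top}Ag_2^{\top}=A$ and also imposes a linear condition coming from $g_3$.

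Concretely, I would use the $M$-orbit description to rewrite the sum as
\[ \Theta_{k,n}(g)=\sum_{r\ge1}\ \sum_{A\in \M_{k\times n-k}(r,F)^{m}} c(A,g_3)\,\St_{G_r}(h_1(A))\,\Theta_{r,n-k}(g_2(A)), \]
where the inner sum runs over the rank $r$ matrices $A$ fixed by $m$, and $c(A,g_3)$ is a normalized character sum. The $A$ fixed by $m$ correspond to rank $r$ intertwiners between the $g_1^{-\top}$ and $g_2^\top$ actions. Since the semisimple part of $g_1$ comes from $F_k^\times$, its only eigenvalue orbit is that of $\lambda$ with $F[\lambda]=F_m$. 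A nonzero fixed $A$ therefore forces $g_2$ to have eigenvalues in the same Galois orbit. Combining this with the inductive hypothesis applied to $\Theta_{r,n-k}(g_2(A))$ forces $m\mid r$, $m\mid k$ and $m\mid n-k$, so $m\mid d$. Otherwise every term vanishes, which gives the vanishing claim.

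In the surviving case, everything happens over $F_m$. Replace $F$ by $F_m$, $q$ by $q'$, and matrices by $F_m$-linear maps on the $\lambda$-primary parts; the induction hypothesis then applies in the form $\Theta_{r,n-k}=(-1)^{n-k-r}({q'}^{t(\cdot)};q')_{\cdots}$. By Lemma~\ref{St_char} the Steinberg factor gives $(-1)^{r-r'}{q'}^{\binom{r'}{2}}$, with $r'=r/m$, provided $h_1$ is semisimple. By Theorem~\ref{LAA_thm} the character sums over $A$ of fixed rank give the polynomials $g_{\cdot\times\cdot,r,\cdot}(q')$, depending only on ranks of kernels, namely $t(g_1)$, $t(g)$ and the rank of the part of $g_3$ relevant to $\ker(g-\lambda)$.

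The main obstacle is the final combinatorial identity: showing that the resulting alternating sum collapses to $(-1)^{n-k}({q'}^{t(g_1)};q')_{t(g)-t(g_1)}$. I would attack it by expanding with the $q$-binomial theorem \eqref{eq:qbinthm}, Lemma~\ref{id_T} and Lemma~\ref{amnr_lemma}, in the same way those lemmas were used in the proof of Lemma~\ref{dim_Pi_k_n}. As a consistency check, at $g=1$ the formula should reduce to that dimension computation. Keeping track of the non-semisimple contributions, where $\St$ vanishes, is the delicate part.
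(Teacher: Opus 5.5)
Your outline has the same architecture as the paper's proof: induction on $n-k$, the Frobenius formula, vanishing via the inductive hypothesis for $\Pi_{r,n-k}$, Lemma~\ref{St_char}, Theorem~\ref{LAA_thm}, and then $q$-identities. Parametrizing the contributing cosets by rank $r$ matrices $A$ with $g_1^{-\top}Ag_2^{\top}=A$ is equivalent to the paper's triples $(\alpha,\beta,h)$, with $\alpha=\ker A^{\top}$, $\beta=\operatorname{im}A^{\top}$, and $h^{-1}$ the isomorphism $F^k/\alpha\to\beta$ induced by $A^{\top}$.

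However, the step that actually brings $t(g)$ into the answer is missing. After the average over $N$, $g_3$ imposes no linear condition; it contributes the phase $\psi_0(\tr(A^{\top}g_3))$. Since $\St_{G_r}(h_1)=0$ unless $h_1$ is semisimple, the only surviving $A$ are those for which $A^{\top}$ factors as $F^k\to F^k/p(g_1)F^k\xrightarrow{\xi}\ker p(g_2)\hookrightarrow F^{n-k}$, with $\xi$ an $F[X]$-module map of $F_m$-rank $r'$. So the non-semisimple terms are not delicate; they simply drop out.

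Because $g_3$ is only $F$-linear, you then have to show two things. First, $\tr(A^{\top}g_3)=\tr_{F_m/F}\tr(\xi_\lambda B_\lambda)$, where $\xi_\lambda$ and $B_\lambda$ are the $\lambda$-eigencomponents over $F_m$ of $\xi$ and of the map $\ker p(g_2)\to F^k/p(g_1)F^k$ induced by $g_3$. Second, $\Rank B_\lambda=t(g_2)-t(g)+t(g_1)$. In the paper this is the snake-lemma argument of Step 3 ($\ker C(g)\simeq K(g)/K(g_1)$) together with Lemma~\ref{lem_C(g)_g_3} ($C(g,\lambda)=c_\lambda B_\lambda$). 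It can also be seen directly: the image of $\ker(g-\lambda)$ under $(x,y)\mapsto y$ is $\{y\in\ker(g_2-\lambda): g_3y\in\operatorname{im}(g_1-\lambda)\}$, with fibres translates of $\ker(g_1-\lambda)$.

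Your proposal lists ``$t(g_1)$, $t(g)$ and the rank of the part of $g_3$ relevant to $\ker(g-\lambda)$'' as the parameters. In fact the matrix sizes are $t(g_1)$ and $t(g_2)$, and $t(g)$ enters only through that rank, which you never determine. Without it the sum cannot be expressed in terms of $t(g)$ at all.

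Once this is supplied, your route is actually shorter than the paper's. One application of Theorem~\ref{LAA_thm} to all of $\M_{t(g_2)\times t(g_1)}(r',F_m)$ gives
\[ \Theta_{k,n}(g)=(-1)^{n-k}\sum_{r'\geq 1}(-1)^{r'}{q'}^{\binom{r'}{2}}({q'}^{r'};q')_{t(g_2)-r'}\, g_{t(g_2)\times t(g_1),r',\rho}(q'), \qquad \rho=\Rank B_\lambda. \]
This bypasses the paper's finer count $n_r(s)$ and the evaluation of $I_s$; the paper applies Theorem~\ref{LAA_thm} only to square invertible $h_0$. You would still owe the identity that this equals $(-1)^{n-k}({q'}^{t(g_1)};q')_{t(g_2)-\rho}$, which you only propose to attack with Lemmas~\ref{id_T} and~\ref{amnr_lemma}.

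Finally, your vanishing step should conclude that the characteristic polynomial of $g_2$ is a power of $p(X)$. That is what the inductive hypothesis applied to $\Theta_{r,n-k}(g_2(A))$ gives. The weaker conclusion $m\mid d$ does not imply that the semisimple part of $g$ comes from $F_d^\times$.
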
 
The proof of this result requires several arguments, and the next subsection \ref{sec_proof_Pi_k_n_char}  is devoted to the proof.
\subsection{Proof of Theorem \ref{Pi_k_n_char}}  \label{sec_proof_Pi_k_n_char} \hfill \\
Throughout this proof let $s_1, s_2$ and $s$ denote the semisimple parts of $g_1, g_2$ and $g$ respectively. Since $s_1$ comes from $F_k^\times$, its minimal polynomial which we denote $p(X)$ is irreducible in $F[X]$ of degree $m|k$. Let $\lambda \in F_m$ denote an eigenvalue of $g_1$, so that $p(X)$ factorizes as $p(X)=\prod_{i=0}^{m-1}(X-\phi^i(\lambda))$ in $F_m[X]$. The symbols $r',k', n'$ and $q'$ stand for $r/m,k/m, n/m$ and $q^m$ respectively. \\

We prove the assertion by induction on the `length' $(n-k)$. If $n-k=0$, the semisimple part of $g=g_1$ indeed comes from $F_d^\times$,  and also  $t(g)-t(g_1)=0$. Therefore $(-1)^{n-k} ({q'}^{k'};q')_{t(g)-t(g_1)}=(-1)^0(q^m;q^m)_0 = 1$, which agrees with the fact that $\Pi_{k,k}$ is the trivial representation of $P_{k,k}=G_k$.
For $n-k>0$, we assume inductively that the assertion is true for $\Pi_{r,n-k}$ which has `length' $(n-k-r) <n-k$. First we prove the theorem for the case 
when $s$ does not come from $F_n^\times$.  \\

\noindent\textbf{\underline{Step 1}: Proof of Theorem \ref{Pi_k_n_char} in  case  $s$ does not come from $F_n^\times$} \hfill \\


We start with the following  elementary lemma:
\begin{lemma} \label{jordan_chevalley_for_P_k_n}
    Let $g=\bbsm g_1 & g_1 g_3\\0& g_2 \besm \in P_{k,n}$ and let $g=s \cdot u$ be the Jordan decomposition of $g$ into its semisimple and unipotent parts. If $s = \bbsm s_1 & s_3\\&s_2 \besm$ and $u = \bbsm u_1 & u_3\\&u_2 \besm$, then
    \begin{enumerate}
        \item[\emph{(1)}]
    For $i \in\{1,2\}$, we have $s_i$  are semisimple, $u_i$ are unipotent and $g_i=s_i u_i=u_i s_i$ is   the Jordan decomposition of $g_i$ into semisimple and unipotent parts.\\

    \item[\emph{(2)}] $s$ comes from $F_n^\times$ if and only if $s_1$ and $s_2$ come from $F_k^\times$ and  $F_{n-k}^\times$ respectively, and have the same minimal polynomial of degree $m$ dividing $\text{gcd}\{k,n\}$.
    \end{enumerate}
\end{lemma}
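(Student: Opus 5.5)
For part (1), I would use the uniqueness of the Jordan decomposition together with the fact that the set of block upper triangular matrices is an algebra closed under polynomials. Both $s$ and $u$ are polynomials in $g$; this is the standard Jordan--Chevalley fact over the perfect field $F$. Since $g \in P_{k,n}$ and $P_{k,n}\cup\{0\}$ together with all block upper triangular matrices forms an $F$-algebra, $s$ and $u$ are block upper triangular, so the decompositions $s=\bbsm s_1 & s_3\\&s_2 \besm$ and $u=\bbsm u_1&u_3\\&u_2\besm$ make sense. The diagonal-block map $\bbsm a & b\\ & c\besm \mapsto (a,c)$ is an algebra homomorphism. Hence $su=us=g$ gives $g_i=s_iu_i=u_is_i$. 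Moreover $s_i=f(g_i)$ for the same polynomial $f$ with $s=f(g)$. The minimal polynomial of $s_i$ divides that of $s$, which is separable, so $s_i$ is semisimple. Similarly $(u_i-I)^N=0$, so $u_i$ is unipotent. Uniqueness of the Jordan decomposition of $g_i$ then finishes part (1).

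For part (2), I use the characterization from the preliminaries: an element comes from $F_\ell^\times$ if and only if its minimal polynomial is irreducible of degree dividing $\ell$. For a semisimple $s$, this is equivalent to the characteristic polynomial being $p(X)^{\ell/m}$ for an irreducible $p$ of degree $m\mid \ell$.

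For the forward direction, suppose $s$ has irreducible minimal polynomial $p$ of degree $m\mid n$. Then $p(s_i)$ is a diagonal block of $p(s)=0$, so the minimal polynomials of $s_1,s_2$ divide $p$ and hence equal $p$. In particular $s_1$ comes from $F_k^\times$ and $s_2$ from $F_{n-k}^\times$. Since the characteristic polynomial of $s_1$ is a power of $p$ of degree $k$, we get $m\mid k$. Together with $m\mid n$, this gives $m\mid \gcd\{k,n\}$.

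For the converse, suppose $s_1,s_2$ share the irreducible minimal polynomial $p$ of degree $m\mid \gcd\{k,n\}$. Then $p(s)=\bbsm 0 & \ast\\ & 0\besm$ has square zero, so $p(s)^2=0$. But $s$ is semisimple, so its minimal polynomial is squarefree. It divides $p^2$, hence divides $p$, and so equals $p$. Since $m\mid n$, $s$ comes from $F_n^\times$.

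The only mildly delicate point is the claim that $s$ lies in $P_{k,n}$. This is handled by $s$ being a polynomial in $g$, so I expect no serious obstacle.
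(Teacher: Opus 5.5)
Your proposal is correct and takes essentially the paper's route: part (1) restricts the Jordan decomposition of $g$ to the diagonal blocks, and part (2) compares $s$ with $s_1,s_2$, using semisimplicity of $s$ to pass from a power of $p$ to $p$ itself. You work with minimal polynomials and the observation $p(s)^2=0$ where the paper uses multiplicativity of characteristic polynomials, and your remark that $s$ and $u$ are polynomials in $g$, hence block upper triangular, justifies a point the paper's statement simply assumes.
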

\begin{proof}
   (1)  Let $m$ be a positive integer such that $u^m = I_n$. Since  $u^m= \bbsm u_1^m & \ast\\ & u_2^m \besm$, it follows that $u_1$ and $u_2$ are unipotent. Since submodules and quotient modules of semisimple modules are themselves semisimple, it follows that  $s_1$  and $s_2$ are semisimple.
    Also the condition $s u=u s = g$ gives $s_i u_i=u_i s_i = g_i$ for $i=1,2$.   This shows that $g_i = s_iu_i$ is the Jordan decomposition of $g_i$ into their semisimple and unipotent parts.\\

    (2) The characteristic polynomial of $s$ which is $\det\bbsm (X I_k-s_1) & -s_3 \\ & (X I_{n-k} -s_2) \besm$ equals $\det(X I_k-s_1) \cdot \det(X I_{n-k}-s_2)$. By definition $s$ comes  from $F_n^\times$ if and only if  its characteristic polynomial $\det(X I_n-s)= 
      \det(X I_k-s_1) \cdot \det(X I_{n-k}-s_2)$ is a   power of an irreducible polynomial $P(X)$ of degree $m|n$. This is possible if and only if $\det(X I_{n-k}-s_2) = P(X)^{(n-k)/m}$ and $\det(X I_{k}-s_1) = P(X)^{k/m}$, or equivalently $s_1$ and $s_2$ come from $F_k^\times$ and $F_{n-k}^\times$ and have the same minimal polynomial. Clearly $m$ divides $k, n-k$ and hence $\text{gcd}\{k,n\}$.
     \end{proof}
    
By definition \ref{def_Pi} of $\Pi_{k,n}$, its character  at $g = \bbsm g_1 & g_1g_3\\ & g_2 \besm\in P_{k,n}$ is 
\begin{align} \label{eq:formula_char_Pi_k_n}
    \nonumber \Theta_{k,n}(g) &= \sum_{r \geq 1}  \chi_{\Ind_{P_{k,n}(r)}^{P_{k,n}} ( \psi_r \otimes [\St_{G_r} \otimes \Pi_{r,n-k}])
    }(g) \\ &= \sum_{r \geq 1}
\tfrac{1}{|P_{k,n}(r)|} \sum_{t \in P_{k,n}}
\chi_{\psi_r \otimes [\St_{G_r} \otimes \Pi_{r,n-k}]} (t^{-1} g t),
\end{align} 
where the inner sum $\sum_{t \in P_{k,n}}$ runs over those $t$ such that $t^{-1} g t \in P_{k,n}(r)$. 

\begin{proposition} \label{char_Pi_k_n_not_semisimple}
For $g = \bbsm g_1 & g_1 g_3 \\ 0 & g_2 \besm$ for which the semisimple part of $g_1 \in G_k$ comes from $F_k^\times$, the character $\Theta_{k,n}$ of $\Pi_{k,n}$ at $g$ is $0$ if the semisimple part of $g$ does not come from $F_n^\times$.
\end{proposition}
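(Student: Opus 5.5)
We argue from the induced-character formula \eqref{eq:formula_char_Pi_k_n}, using the inductive hypothesis on $\Pi_{r,n-k}$, which has length $n-k-r<n-k$. It suffices to show that every summand $\chi_{\psi_r\otimes[\St_{G_r}\otimes\Pi_{r,n-k}]}(t^{-1}gt)$ vanishes whenever $t^{-1}gt\in P_{k,n}(r)$. Since conjugation by $t\in P_{k,n}$ preserves the hypotheses (the semisimple part of the $G_k$-block still comes from $F_k^\times$, and the semisimple part of $g$ still does not come from $F_n^\times$), we may replace $g$ by $t^{-1}gt$ and assume $g\in P_{k,n}(r)$. Then
\[ g_1=\bbsm h_1&0\\h_5&h_4\besm,\qquad g_2=\bbsm h_1&h_3\\0&h_2\besm, \]
and the summand equals $\psi_r(\cdot)\,\St_{G_r}(h_1)\,\Theta_{r,n-k}(g_2)$.

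\textbf{Step 1.} First we use the structure of $g_1$. By Lemma \ref{jordan_chevalley_for_P_k_n}(1), applied to the block form of $g_1$ (transposed appropriately), the semisimple part of $h_1$ is a block of the semisimple part $s_1$ of $g_1$. Since $s_1$ comes from $F_k^\times$ with irreducible minimal polynomial $p(X)$ of degree $m\mid k$, the characteristic polynomial of $h_1$ is a power of $p$. Hence either the semisimple part of $h_1$ comes from $F_r^\times$ with minimal polynomial $p$ (which forces $m\mid r$), or else $h_1$ is not semisimple-from-$F_r^\times$. In the latter case $\St_{G_r}(h_1)$ is in any event handled by Lemma \ref{St_char}.

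\textbf{Step 2.} Now apply the inductive hypothesis to $\Pi_{r,n-k}$ at $g_2$. Its $G_r$-block is $h_1$, whose semisimple part comes from $F_r^\times$ by Step 1. So by induction, $\Theta_{r,n-k}(g_2)=0$ unless the semisimple part $s_2$ of $g_2$ comes from $F_{n-k}^\times$ with the same minimal polynomial $p$; here we use Lemma \ref{jordan_chevalley_for_P_k_n}(2) for $P_{r,n-k}$ together with the gcd condition. When $\Theta_{r,n-k}(g_2)\neq 0$, then, $s_1$ and $s_2$ both come from $F_k^\times$ and $F_{n-k}^\times$ with common minimal polynomial $p$. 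Lemma \ref{jordan_chevalley_for_P_k_n}(2) for $P_{k,n}$ then shows that $s$ comes from $F_n^\times$, contradicting the hypothesis. Hence every summand vanishes, and $\Theta_{k,n}(g)=0$.

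\textbf{Expected obstacle.} The delicate point is Step 2, where the inductive statement of Theorem \ref{Pi_k_n_char} for $\Pi_{r,n-k}$ is invoked. That statement concludes that the character vanishes unless the semisimple part comes from $F_{\gcd(r,n-k)}^\times$. We must check that this yields exactly the condition that $s_2$ comes from $F_{n-k}^\times$ with minimal polynomial $p$, and this requires the $G_r$-block of $g_2$ to be literally the same $h_1$ that appears in $g_1$. We also need the Jordan-decomposition compatibility for the lower-triangular parabolic $P^*_{r,k}$, which follows from Lemma \ref{jordan_chevalley_for_P_k_n}(1) after applying $g\mapsto g^{-\top}$ or a transpose. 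The case $r$ with $m\nmid r$ is automatically excluded, since $h_1$ has characteristic polynomial a power of $p$.
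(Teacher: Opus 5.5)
Your proof is correct and follows essentially the same route as the paper. Both induct on the length $n-k$, conjugate into $P_{k,n}(r)$, use the block structure of $g_1$ to see that the semisimple part of $h_1$ comes from $F_r^\times$ with minimal polynomial $p(X)$, and then combine the inductive hypothesis for $\Pi_{r,n-k}$ at $g_2$ with Lemma \ref{jordan_chevalley_for_P_k_n}(2); you argue by contrapositive where the paper argues directly, which is only cosmetic. One small point: the dichotomy in your Step 1 is unnecessary, since a semisimple matrix whose characteristic polynomial is a power of the irreducible $p(X)$ automatically has minimal polynomial $p(X)$, so the first alternative always holds.
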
 
\begin{proof}
 We prove the assertion  by induction on the `length' $(n-k)$. If $n-k=0$, the assertion is vacuously true, because $g=g_1$. For $n-k>0$, we assume inductively that the assertion is true for $\Pi_{r,n-k}$ which has `length' $(n-k-r) <n-k$.  Let $s_1, s_2$ and $s$ denote the semisimple parts of $g_1, g_2$ and $g$. Therefore, by Lemma \ref{jordan_chevalley_for_P_k_n},  $s$ does not come from $F_n^\times$ if and only if: either $s_2$ does not come from $F_{n-k}^\times$ or  $s_2$ comes  from $F_{n-k}^\times$ but with a minimal polynomial different from $p(X)$. In 
\eqref{eq:formula_char_Pi_k_n}, if $t=\bbsm t_1 & t_1t_3\\ & t_2 \besm$ then $t^{-1} g t$ is of the form $\bbsm t_1^{-1} g_1 t_1 & \ast \\ & t_2^{-1} g_2 t_2 \besm$. It is in $P_{k,n}(r)$ and hence is of the form $\bbsm \begin{smallmatrix}
h_1 &\\h_5 &h_4 \end{smallmatrix} & *\\ & \begin{smallmatrix}
h_1 &h_3\\ &h_2 \end{smallmatrix}\besm$.
If there is no $t \in P_{k,n}$ such that $t^{-1} g t$ is in $P_{k,n}(r)$, then clearly 
$\Theta_{k,n}(g)=0$ by \eqref{eq:formula_char_Pi_k_n}. So we now assume there is such a $t$. We are given that  the semisimple part $s_1$ of $g_1$ comes from $F_k^\times$ and has minimal polynomial $p(X)$. Thus the semisimple part of  $t_1^{-1} g_1 t_1 = \bbsm 
h_1 & \\ h_5 & h_4 \besm$
also comes from $F_k^\times$ and has minimal polynomial $p(X)$. 
As in Lemma \ref{jordan_chevalley_for_P_k_n},
this is equivalent to the semisimple part of $h_1, h_4$ coming from $F_{\text{gcd}\{r,k\}}^\times$. In particular, the semisimple part $\sigma_1$ of $h_1$ comes from $F_r^\times$ and has minimal polynomial $p(X)$. Turning now to 
$t_2^{-1} g_2 t_2= \bbsm h_1 & h_3\\ & h_2 \besm$, and using the fact that   the semisimple part $\sigma_1$ of $h_1$ comes from $F_r^\times$ with minimal polynomial $p(X)$, it follows that in case $s_2$ comes from $F_{n-k}^\times$, its minimal polynomial must be the same as that of $s_1$, namely $p(X)$. Therefore, by  Lemma \ref{jordan_chevalley_for_P_k_n}, that  the condition $g$ does not come from $F_n^\times$ is equivalent to the semisimple part $s_2$ of $g_2$ not coming from $F_{n-k}^\times$. So the semisimple part of $\bbsm h_1 & h_3\\ & h_2 \besm$ does not come from $F_{n-k}^\times$ but the semisimple part of $h_1$ does come from $F_r^\times$. Since we have assumed the inductive hypothesis for $\Pi_{r,n-k}$, we conclude that $\Theta_{r,n-k}(t_2^{-1} g_2 t_2)=0$. This completes the proof that $\Theta_{k,n}(g)=0$ if $s$ does not come from $F_n^\times$.
\end{proof}


\noindent \textbf{\underline{Step 2}: Representatives $g_{\alpha,\beta, h}$ of the left cosets of $P_{k,n}(r)$ in $P_{k,n}$} \hfill \\

As shown in Lemma \ref{dim_Pi_k_n}, the index of $P_{k,n}(r)$ in $P_{k,n}$ is
\[\tfrac{|P_{k,n}|}{|P_{k,n}(r)|}=\tfrac{|G_k \times G_{n-k}|}{|P^*_{r,k} \times P_{r,n-k}|} \tfrac{|P^*_{r,k} \times P_{r,n-k}|}{|M_r|}
=\stirling{k}{r}_q \stirling{n-k}{r}_q |G_r|=a(k \times n-k,r,q).\]
Since 
\[M_r=\left\{ \bbsm \begin{smallmatrix}
h_1 &\\h_5 &h_4 \end{smallmatrix} & \\ & \begin{smallmatrix}
h_6 &h_3\\ &h_2 \end{smallmatrix}\besm  \in P^*_{r,k} \times P_{r,n-k} \colon h_1=h_6\right\}, \]
it is clear that the set
\[ \left\{ \bbsm \begin{smallmatrix}
h &\\ &I_{k-r} \end{smallmatrix} & \ \\ & I_{n-k} \besm  \in P^*_{r,k} \times P_{r,n-k} \colon h \in G_r\right\}, \]
is a complete set of representatives of the left cosets of $M_r$ in $P^*_{r,k} \times P_{r,n-k}$.\\

For each $(k-r)$ dimensional subspace $\alpha$ of $F^k$, let $g_\alpha \in G_k$ be a matrix whose last $(k-r)$ columns span $\alpha$. For each $r$-dimensional subspace $\beta$ of $F^{n-k}$, let $g_\beta \in G_{n-k}$ be a matrix whose first $r$ columns span $\beta$. We claim that the collection of $\stirling{k}{k-r}_q \stirling{n-k}{r}_q$ matrices in $G_k \times G_{n-k} \subset P_{k,n}$ of the form  $g_{\alpha,\beta}=\bbsm  g_\alpha   & \\& g_\beta \besm$ 
forms a complete set of representatives of the  $\stirling{k}{k-r}_q \stirling{n-k}{r}_q$ left cosets of $P^*_{r,k} \times P_{r,n-k}$ in $G_k\times G_{n-k}$. It  suffices to show that the cosets $g_{\alpha, \beta} P^*_{r,k} \times P_{r,n-k}$  are distinct. However, this is immediate from the fact that the coset $g_{\alpha, \beta} P^*_{r,k} \times P_{r,n-k}$ consists of those elements $\bbsm g_1 &\\ & g_2 \besm $ of $G_k \times G_{n-k}$ for which the span of the last $k-r$ columns of $g_1$ is $\alpha$, and the span of the first $r$ columns of $g_2$ is $\beta$. In conclusion, the matrices $g_{\alpha,\beta, h}=\bbsm g_\alpha \bbsm h &\\&I_{k-r} \besm & \\& g_\beta \besm $ form a complete set of representatives for the left cosets of $P_{k,n}(r)$ in $P_{k,n}$. 
The matrix 
\[ g_{\alpha,\beta, h}^{-1} \,g \,g_{\alpha,\beta, h}= \bbm  \bbsm h^{-1} & \\ & I_{k-r} \besm g_\alpha^{-1} g_1 g_\alpha \bbsm h &\\&I_{k-r} \besm &   \\ & g_\beta^{-1} g_2 g_\beta \bem  \cdot \bbm I_k & \bbsm h^{-1} & \\ & I_{k-r} \besm g_\alpha^{-1} g_3 g_\beta \\ & I_{n-k} \bem 
\]
is in $P_{k,n}(r)$ if and only if 
\[  \bbm  \bbsm h^{-1} & \\ & I_{k-r} \besm g_\alpha^{-1} g_1 g_\alpha \bbsm h &\\&I_{k-r} \besm &   \\ & g_\beta^{-1} g_2 g_\beta \bem \text{ has the form }
\bbsm \begin{smallmatrix}
h_1 &\\h_5 &h_4 \end{smallmatrix} & \\ & \begin{smallmatrix}
h_1 &h_3\\ &h_2 \end{smallmatrix}\besm  \in M_r.\]

We note that  $h_4 \in G_{k-r}$ represents $g_1|_\alpha$  and $h_1$ represents the induced operator  $\bar{g_1}: F^k/\alpha \to F^k/\alpha$ as well as the operator  $g_2|_\beta: \beta \to \beta$. We now rewrite the inner sum (for fixed value of $r$ in the outer sum) in the formula \eqref{eq:formula_char_Pi_k_n} in terms of the coset representatives $g_{\alpha, \beta, h}$:
\begin{equation} \label{eq:ind_char_for_Pi} \sum_{g_{\alpha, \beta,h}} \chi_{\psi_r \otimes [\St_{G_r} \otimes \Pi_{r,n-k}]} (g_{\alpha,\beta, h}^{-1} \,g \,g_{\alpha,\beta, h}),\end{equation}
where $\sum_{g_{\alpha, \beta, h}}$ runs over those $g_{\alpha,\beta, h}$ for which $g_{\alpha,\beta, h}^{-1} \,g \,g_{\alpha,\beta, h}$ lies in $P_{k,n}(r)$. \\


\noindent\textbf{\underline{Step 3}: The number of choices of $g_{\alpha, \beta,h}$  that contribute to \eqref{eq:ind_char_for_Pi}}\hfill \\

Since $\St_{G_r}(h_1)=0$ for non-semisimple $h_1$, we  insist the operators $\bar{g_1}: F^k/\alpha \to F^k/\alpha$ and  $g_2|_\beta: \beta \to \beta$ are semisimple, for otherwise $\St_{G_r}(h_1)=0$ (in \eqref{eq:ind_char_for_Pi}) for the matrix $h_1$ that represents these operators.  Since the case when $s$ does not come from $F_n^\times$ has been addressed in Proposition \ref{char_Pi_k_n_not_semisimple}, we now assume $s$ comes from $F_n^\times$. By part (2) of Lemma \ref{jordan_chevalley_for_P_k_n}, we must have that $s_2$ comes from $F_{n-k}^\times$ and its minimal polynomial is again $p(X)$. Therefore the semisimple part of  $g_\beta^{-1} g_2 g_\beta=\bbsm h_1 &h_3\\ &h_2  \besm$ also comes from $F_{n-k}^\times$. Again by part (2) of Lemma \ref{jordan_chevalley_for_P_k_n}, the semisimple part of $h_1$ comes from $F_r^\times$ and has minimal polynomial $p(X)$. Since $h_1$  is semisimple, we conclude that $h_1$ is \emph{semisimple  and comes from $F_r^\times$}. 
 Since $h_1$ represents the operators  $\bar{g_1}: F^k/\alpha \to F^k/\alpha$ and  $g_2|_\beta: \beta \to \beta$, both these operators are semisimple and  come from $F_r^\times$. \\

We consider $F^k, F^{n-k}$ and $F^n$ as  $F[X]$-modules with $X$ acting as the $F$-linear endomorphism $g_1, g_2$ and $g$ respectively. Since the minimal polynomial of $s_1, s_2$ and $s$ is $p(X)$, the characteristic polynomials of $s_1, s_2$ and $s$ (and hence the characteristic polynomials of  $g_1, g_2$  and $g$) are  $p(X)^{k'}$, $p(X)^{n'-k'}$ and $p(X)^{n'}$ respectively. In particular  we see $F^k, F^{n-k}$ and $F^n$ are  $p(X)$-torsion modules. Thus we can write 
\[
F^k \simeq_{F[X]} \bigoplus_{i=1}^{t(g_1)} \, \sfrac{F[X]}{p(X)^{\mu_i}},\;
F^{n} \simeq_{F[X]} \bigoplus_{i=1}^{t(g)} \,  \sfrac{F[X]}{p(X)^{\epsilon_i}}, \;
F^{n-k} \simeq_{F[X]} \bigoplus_{i=1}^{t(g_2)} \, \sfrac{F[X]}{p(X)^{\nu_i}}
\]
for some positive integers $\nu_i$, $\mu_i$ and $\epsilon_i$. We note that $(X-\lambda)$-torsion submodule of $F^{k}\otimes F_m$ is $\oplus_{i=1}^{t(g_1)} F_m[X]/(X-\lambda)^{\mu_i}$  and hence $t(g_1)=\dim \ker(g_1-\lambda I_{k})$ where the dimension is over any extension field of $F_m$, for example $F_n$. Similarly
\[
  t(g_1)=  \dim_{F_n} \ker(g_1-\lambda I_{k}),\;
  t(g)=  \dim_{F_n} \ker(g-\lambda I_{n}),\;
  t(g_2)=\dim_{F_n} \ker(g_2-\lambda I_{n-k}).
  \]
Consider the multiplication by $p(X)$-maps on each of the three $F[X]$-modules $F^k, F^n$ and $F^{n-k}$. These fit into the following commutative diagram of $F[X]$-modules where the rows are exact:

\begin{equation} \label{eq:snake}
\begin{tikzcd}
0 \arrow[r] & K(g_1) \arrow[r] \arrow[d] & K(g) \arrow[r] \arrow[d] & K(g_2) \arrow[d]    \\
0 \arrow[r] & F^k \arrow[r] \arrow[d,"p(X)"] & F^n \arrow[r] \arrow[d,"p(X)"] & F^{n-k} \arrow[r] \arrow[d,"p(X)"] & 0 \\
0 \arrow[r] & F^k \arrow[r]                  & F^n \arrow[r]                  & F^{n-k} \arrow[r]                  & 0
\end{tikzcd}
\end{equation}
and the kernel of the multiplication by $p(X)$-maps are:

\begin{align*} 
K(g_1) &=\oplus_{i=1}^{t(g_1)} \,  \sfrac{p(X)^{\mu_i-1} F[X]}{p(X)^{\mu_i}}  \,  \simeq \, \oplus^{t(g_1)} \, \sfrac{F[X]}{p(X)} \\
K(g) &= \oplus_{i=1}^{t(g)}\,  \sfrac{p(X)^{\epsilon_i-1} F[X]}{p(X)^{\epsilon_i}}  \,   \simeq\,   \oplus^{t(g)} \, \sfrac{F[X]}{p(X)},\\
K(g_2) &= \oplus_{i=1}^{t(g_2)} \,   \sfrac{p(X)^{\nu_i-1}F[X]}{p(X)^{\nu_i}} \, \simeq \oplus^{t(g_2)} \sfrac{F[X]}{p(X)}
\end{align*}
We also note the structure of  coker$(F^k \xrightarrow{p(X)} F^k)$: since $p(X) F^k = \oplus_{i=1}^{t(g_1)} \,  \sfrac{p(X)  F[X]}{p(X)^{\mu_i}}$, we see that 
\[ \text{coker}(F^k \xrightarrow{p(X)} F^k) \simeq_{F[X]} \,   \oplus^{t(g_1)} \, \sfrac{F[X]}{p(X)}.     \]
For the  $F[X]$-submodule $\alpha$ of $F^k$, we note that $\det(XI_k - g_1)=p(X)^{k'}$ is the product of the characteristic polynomials of $g_1|_\alpha$ and $\bar{g_1}:F^k/\alpha \to F^k/\alpha$. In particular,  $F^k/\alpha$ is $p(X)$-torsion. Since $F^k/\alpha$ is semisimple, it follows that $F^k/\alpha \simeq_{F[X]} \oplus^{r'} F[X]/p(X)$,  and hence $p(X) (F^k/\alpha)=0$. This shows that $p(X) F^k \subset \alpha$, and hence $\bar{\alpha}:=\sfrac{\alpha}{p(X)F^k}$ is a submodule of $\text{coker}(F^k \xrightarrow{p(X)} F^k)$. Therefore, 
\[ \oplus^{r'} \sfrac{F[X]}{p(X)} \simeq \sfrac{F^k}{\alpha} \simeq \text{coker}(F^k \xrightarrow{p(X)} F^k)/\bar\alpha  \simeq (\oplus^{t(g_1)} \, \sfrac{F[X]}{p(X)})/\bar{\alpha}. \]

We note that the $F[X]$-module $\oplus^{\ell} F[X]/p(X)$ is annihilated by $p(X)$, and hence it naturally carries an  $F[X]/p(X)$-module structure. Identifying $F[X]/p(X)$ with the field $F_m$, we identify the $F[X]$-module $\oplus^{\ell} F[X]/p(X)$ with the $F_m$-vector space $F_m^\ell$. In this way 
\[F^k/\alpha \simeq F_m^{r'} \simeq F_m^{t(g_1)}/\bar\alpha. \]
This shows that the number of ways to choose $\alpha$ is same as the number $\stirling{t(g_1)}{t(g_1)-r'}_{q'}$ of $(t(g_1)-r')$-dimensional $F_m$-linear subspaces  of $F_m^{t(g_1)}$.\\

Similarly, the characteristic polynomial of $g_2|_\beta$ is $p(X)^{r'}$, and since $g_2|_\beta$ is semisimple, we see that $\beta \simeq \oplus^{r'} F[X]/p(X)$. Clearly $p(X) \beta=0$ and hence $\beta$ is a submodule of $K(g_2) \simeq \oplus^{t(g_2)} F[X]/p(X)$.
As above, we may regard $\beta$ as $r'$ dimensional $F_m$-linear subspace of $F_m^{t(g_2)}$, and hence the number of ways of choosing $\beta$ is $\stirling{t(g_2)}{r'}_{q'}$.\\

Let $h_1$ be the $r \times r$ submatrix  on the first $r$ rows and columns of $g_\beta^{-1} g_2 g_\beta$ as well as $\bbsm h^{-1} & \\ & I_{k-r} \besm g_\alpha^{-1} g_1 g_\alpha \bbsm h &\\&I_{k-r} \besm $ which has the form $\bbsm h_1 & \\\ast & \ast \besm$. If $h_1'$  is the $r \times r$ submatrix  on the first $r$ rows and columns of $ g_\alpha^{-1} g_1 g_\alpha$, then we need $h^{-1}h_1'h=h_1$. Since $h_1'$ represents $\bar{g_1}:F^k/\alpha \to F^k/\alpha$ and $h_1$ represents $g_2|_\beta: \beta \to \beta$, it follows that $h$ is an invertible element of 
$\Hom_{F[X]}(\beta, F^k/\alpha)$. Since $\beta \simeq_{F[X]} F^k/\alpha \simeq_{F[X]} \oplus^{r'} F[X]/p(X)$, we can identify $\Hom_{F[X]}(\beta, F^k/\alpha)$  with $\End_{F_m}(F_m^{r'})$ and invertible elements of $\Hom_{F[X]}(\beta, F^k/\alpha)$ with $\GL_{r'}(F_m)$. Thus, the number of choices for $h$ is $\# \GL_{r'}(F_m)$. \\

 We now summarize the above discussion about the number of ways to pick $\alpha, \beta$ and $h$: Of  the $a(k \times n-k,r,q)$ cosets $g_{\alpha, \beta, h}$ of $P_{k,n}(r)$ in $P_{k,n}$, for $g$ as above, there are  only 
\[ a(t(g_1) \times t(g_2), r',q')=\stirling{t(g_1)}{r'}_{q'} \cdot \stirling{t(g_2)}{r'}_{q'}\cdot |\GL_{r'}(F_m)|\]
cosets that contribute to \eqref{eq:ind_char_for_Pi}.\\

For use in the next step, we count the number of choices $\stirling{t(g_1)}{r'}_{q'} \cdot \stirling{t(g_2)}{r'}_{q'}$ of the pair $(\alpha, \beta)$ in a finer way:   Let coker$(p(X))$ stand for $\text{coker}(F^k\xrightarrow{p(X)}F^k)$. From the `snake-lemma' applied to the diagram \eqref{eq:snake}, there is a $F[X]$-module connecting homomorphism $K(g_2) \xrightarrow{C(g)} \text{coker}(p(X))$, such that 
\[ K(g)/K(g_1) \simeq_{F[X]}   \ker(C(g)).\]
Since $\beta$ is a submodule of $K(g_2)$ and $F^k/\alpha \simeq \text{coker}(p(X))/\bar{\alpha}$ as above, we define a map $C(g)_{\alpha,\beta}$ by the diagram below:
\[ 
\begin{tikzcd}
K(g_2) 
  \arrow[r, "C(g)"] 
& \text{coker}(p(X))
  \arrow[d, twoheadrightarrow]
\\
\beta \arrow[r, "C(g)_{\alpha, \beta}"'] \arrow[u,hook']
&
F^k/\alpha
\end{tikzcd}
\]
Since $K(g_2) \simeq_{F[X]} \,   \oplus^{t(g_2)} \, \sfrac{F[X]}{p(X)}$ 
 and $\text{coker}(p(X)) \simeq_{F[X]} \,   \oplus^{t(g_1)} \, \sfrac{F[X]}{p(X)}$, the map $C(g)$ is an element of $\Hom_{F[X]}(\oplus^{t(g_2)} \, \sfrac{F[X]}{p(X)}, \oplus^{t(g_1)} \, \sfrac{F[X]}{p(X)})$ which we identify with $\Hom_{F_m}(F_m^{t(g_2)}, F_m^{t(g_1)})$. Let $t(g_3):=\dim_{F_m}\ker(C(g))$. From the fact that $K(g)/K(g_1) \simeq_{F[X]} \ker(C(g))$, we get
 \[ t(g)-t(g_1)=t(g_3).\]
We partition the number of choices $\stirling{t(g_1)}{r'}_{q'} \cdot \stirling{t(g_2)}{r'}_{q'}$ of the pairs $(\alpha, \beta)$ as 
\[ \stirling{t(g_1)}{r'}_{q'} \cdot \stirling{t(g_2)}{r'}_{q'}
 = \sum_s n_r(s),
\]
where $n_r(s)$ is the number of pairs $(\alpha, \beta)$ such  that the map 
\[ C(g)_{\alpha, \beta} \in \Hom_{F[X]}(\beta, F^k/\alpha) \simeq \End_{F_m}(F_m^{r'}), \]
has rank $s$.  We now calculate $n_r(s)$ in terms of $t(g_3)$. Let $\dim_{F_m}( \beta \cap \ker C(g))=r'-t$ for some integer $t$. The number of such $\beta \subset K(g_2)$ is 
\[ \stirling{t(g_3)}{r'-t}_{q'} \stirling{t(g_2)-t(g_3)}{t}_{q'} (q')^{t(t(g_3)-r'+t)}.\] For such a $\beta$, the map $C(g)_{\alpha, \beta}$ has rank $s$ if and only if $\dim_{F_m}(\bar\alpha \cap \, C(g)(\beta))=t-s$. The number of such $\bar\alpha \subset \text{coker}(p(X))$  is 
\[ \stirling{t}{t-s}_{q'} \stirling{t(g_1)-t}{t(g_1)-r'-t+s}_{q'}  (q')^{(t(g_1)-r'-t+s)s}.\]
Therefore, 
\begin{equation} \label{eq:n_r_s}
     n_r(s)=\sum_t \left(\stirling{t(g_3)}{r'-t}_{q'} \stirling{t(g_2)-t(g_3)}{t}_{q'} (q')^{t(t(g_3)-r'+t)} \right) \left( \stirling{t}{t-s}_{q'} \stirling{t(g_1)-t}{t(g_1)-r'-t+s}_{q'}  (q')^{(t(g_1)-r'-t+s)s}\right).     
\end{equation}

\noindent{\textbf{\underline{Step 4}: The sum over $h$ in \eqref{eq:ind_char_for_Pi} for fixed $\alpha, \beta$} \hfill \\

In the formula \eqref{eq:ind_char_for_Pi}, we first fix a pair $(\alpha, \beta)$ such that the $F[X]$-submodule $\beta$ of $F^{n-k}$ and the quotient $F[X]$-module  $F^k/\alpha$ are isomorphic to $\oplus^{r'} F[X]/p(X)$, and we  calculate the sum  
\[ \sum_{h} \chi_{\psi_r \otimes [\St_{G_r} \otimes \Pi_{r,n-k}]} (g_{\alpha,\beta, h}^{-1} \,g \,g_{\alpha,\beta, h})\]
where $h$ runs over the $F[X]$-module isomorphisms from $\beta$ to $F^k/\alpha$. The above sum can be written as
\[ \sum_{h} \St_{G_r}(h_1) \cdot \Theta_{r,n-k}(g_\beta^{-1} g_2 g_\beta) \cdot  \psi_0(\tr(h^{-1} g_3(\alpha, \beta)), \]
where $h_1$ as before, is the $r \times r$ submatrix of $g_\beta^{-1} g_2 g_\beta$ on the first $r$ rows and columns, and where $g_3(\alpha, \beta)$ is the $r \times r$ submatrix of $g_\alpha^{-1} g_3 g_\beta$ on the first $r$ rows and columns.
We note that $\St_{G_r}(h_1)= (-1)^{r-r'}  {q'}^{\binom{r'}{2}}$ by Lemma \ref{St_char} and the fact that $h_1$ is semsimple and comes from $F_r^\times$.
Next we determine $\Theta_{r,n-k}(g_\beta^{-1} g_2 g_\beta)$. The semisimple part  of $g_\beta^{-1} g_2 g_\beta$ is $g_\beta^{-1} s_2 g_{\beta}$ which comes from $F_{n-k}^\times$ because $s_2$ comes from $F_{n-k}^\times$. Also $g_\beta^{-1} g_2 g_\beta=\bbsm h_1 &h_3\\ &h_2  \besm$, with the semisimple part of $h_1$ (which is $h_1$ itself) coming from $F_r^\times$.  Therefore by the inductive hypothesis, we have 
\[ \Theta_{r,n-k} (g_\beta^{-1} g_2 g_\beta)= (-1)^{n-k-r} ((q')^{r'};q')_{t(g_2)-r'}.\]
Therefore, the above sum can be written as:
\[  
(-1)^{n-k-r'} (q')^{\binom{r'}{2}} 
((q')^{r'};q')_{t(g_2)-r'}
\sum_{h}   \psi_0(\tr(h^{-1} g_3(\alpha, \beta)). \]
We will show that the term $\sum_{h}   \psi_0(\tr(h^{-1} g_3(\alpha, \beta))$ equals 
$(-1)^s (q')^{\binom{s}{2} +s(r'-s)} |\GL_{r'-s}(F_m)|$ 
where $s$ is the rank of $C(g)_{\alpha, \beta}$ viewed as an element of $\End_{F_m}(F_m^{r'})$ (as in the previous step). Therefore, the above sum becomes
\begin{equation} \label{eq:sum_over_h} (-1)^{n-k-r'} (q')^{\binom{r'}{2}} 
((q')^{r'};q')_{t(g_2)-r'} (-1)^s (q')^{\binom{s}{2} +s(r'-s)} |\GL_{r'-s}(F_m)| n_r(s) \end{equation}
where $n_r(s)$ is determined in \eqref{eq:n_r_s}. We note that $g_3(\alpha, \beta)$ represents the induced map 
\[ \beta \xrightarrow{g_3|_\beta} F^k/\alpha.\]
We recall that 
\[ K(g_2) \simeq_{F[X]} \oplus^{t(g_2)} \tfrac{F[X]}{p(X)}, \quad \text{coker}(p(X))=\tfrac{F^k}{p(X) F^k} \simeq_{F[X]} \,   \oplus^{t(g_1)} \, \tfrac{F[X]}{p(X)}. \]
It will be useful to choose generators $v_1, \dots, v_{t(g_2)}$ and $u_1, \dots,u_{ t(g_1)}$ such that 
\begin{align} \label{eq:generators_Kg_2}  K(g_2)  \simeq_{F[X]} \bigoplus_{j=1}^{t(g_2)} \tfrac{F[X]}{p(X)} v_j, \qquad  \tfrac{F^k}{p(X) F^k} \simeq_{F[X]} \bigoplus_{i=1}^{t(g_1)} \tfrac{F[X]}{p(X)} u_i, \\
 \nonumber \beta  \simeq_{F[X]} \bigoplus_{j=1}^{r'} \tfrac{F[X]}{p(X)} v_j, \quad  \bar\alpha   \simeq_{F[X]} \bigoplus_{i=r'+1}^{t(g_1)} \tfrac{F[X]}{p(X)} u_i, \quad F^k/\alpha \simeq_{F[X]} \bigoplus_{i=1}^{r'} \tfrac{F[X]}{p(X)} u_i.\end{align}
By extending the scalars from $F$ to $F_m$, the $F_m[X]$-modules  $\beta \otimes F_m \subset K(g_2) \otimes_F F_m$,  and $F_m^k/(\alpha \otimes F_m) \subset F_m^k/p(X) F_m^k$ decompose as a direct sum of eigenspaces for the eigenvalues $\phi^\ell(\lambda),   0 \leq \ell \leq m-1$, which we denote as $\beta (\phi^\ell(\lambda)) \subset K(g_2, \phi^\ell(\lambda))$ and $\alpha'(\phi^\ell(\lambda)) \subset U(\phi^\ell(\lambda))$ respectively:
\begin{align*}
    K(g_2) \otimes_F F_m &\simeq_{F_m[X]} \oplus_{\ell=0}^{m-1} K(g_2, \phi^\ell(\lambda)), \quad K(g_2,\lambda) = \oplus_{j=1}^{t(g_2)} \tfrac{F[X]}{X-\lambda} v_j, \\
    \tfrac{F_m^k}{p(X) F_m^k} &\simeq_{F_m[X]} \oplus_{\ell=0}^{m-1} U(\phi^\ell(\lambda)), \quad U(\lambda)= \oplus_{i=1}^{t(g_1)} \, \tfrac{F[X]}{X-\lambda} u_i, \\
    \beta \otimes_F F_m &\simeq_{F_m[X]} \oplus_{\ell=0}^{m-1} \beta(\phi^\ell(\lambda)), \quad \beta(\lambda) = \oplus_{j=1}^{r'} \tfrac{F[X]}{X-\lambda} v_j, \\
       \tfrac{F_m^k}{\alpha \otimes F_m} &\simeq_{F_m[X]} \oplus_{\ell=0}^{m-1} \alpha'(\phi^\ell(\lambda)), \quad \alpha'(\lambda)= \oplus_{i=1}^{r'} \, \tfrac{F[X]}{X-\lambda} u_i
\end{align*}
Here, $K(g_2, \phi^\ell(\lambda))=\phi^\ell(K(g_2, \lambda))$ and similarly for $U(\phi^\ell(\lambda))$, $\beta(\phi^\ell(\lambda))$ and $\alpha'(\phi^\ell(\lambda))$.  

Viewing the 
connecting homomorphism $C(g) \in \Hom_{F[X]}(K(g_2), \text{coker}(p(X)))$ as an element of $\Hom_{F_m[X]}(K(g_2)\otimes F_m, F_m^k/p(X) F_m^k)$, there is an element $C(g,\lambda) \in \Hom_{F_m[X]}(K(g_2, \lambda), U(\lambda))$ such that $C(g)=\oplus_{\ell=0}^{m-1} K(g_2, \phi^\ell(\lambda)) \xrightarrow{C(g,\phi^\ell(\lambda))} \oplus_{\ell=0}^{m-1} U(\phi^\ell(\lambda))$ with $C(g,\phi^\ell(\lambda)) = \phi^\ell \circ C(g,\lambda) \circ \phi^{-\ell}$. Similarly the map $C(g)_{\alpha, \beta} \in \Hom_{F[X]}(\beta, F^k/\alpha)$ when viewed as an element of $\Hom_{F_m[X]}(\beta \otimes F_m, F_m^k/(\alpha \otimes F_m))$ is described by a map $C(g,\lambda)_{\alpha, \beta} \in \Hom_{F_m[X]}(\beta(\lambda), \alpha'(\lambda))$. In terms of the generators $K(g_2,\lambda) = \bigoplus_{j=1}^{t(g_2)} \tfrac{F[X]}{X-\lambda} v_j$ and $U(\lambda)= \bigoplus_{i=1}^{t(g_1)} \, \tfrac{F[X]}{X-\lambda} u_i$, the map $C(g,\lambda)$ is described by a matrix \[ A(g, \lambda) \in \M_{t(g_2) \times t(g_1)}(\tfrac{F_m[X]}{X-\lambda}) \simeq \M_{t(g_2) \times t(g_1)}(F_m), \qquad C(g, \lambda) v_j = \sum_i A(g,\lambda)_{i,j} u_i.\] The submatrix $A(g,\lambda)_{\alpha, \beta} \in \M_{r' \times r'}(F_m)$ of 
$A(g, \lambda)$ on the first $r'$ rows and columns 
describes the map $C(g,\lambda)_{\alpha, \beta}$. The matrix $g_3 \in \M_{n-k \times k}(F)$, defines a linear map $F^{n-k} \xrightarrow{g_3} F^k$. Consider the map 
\[g_3' \in \Hom_F(K(g_2), \text{coker}(p(X))), \; \text{defined by }\;  K(g_2) \hookrightarrow F^{n-k} \xrightarrow{g_3} F^k \to \tfrac{F^k}{p(X) F^k}.\]
We also consider the linear map 
\[ g_3(\alpha, \beta) \in \Hom_{F}(\beta, F^k/\alpha), \; \text{defined  by } \; \beta \hookrightarrow K(g_2) \xrightarrow{g_3'} \tfrac{F^k}{p(X) F^k} \to \tfrac{F^k}{\alpha}.\]
Extending the scalars  from $F$ to $F_m$, we consider $g_3'$ as an element of  
\[ g_3' \in \Hom_{F_m}(K(g_2)\otimes F_m, \tfrac{F_m^k}{p(X) F_m^k}) = \bigoplus_{j=0}^{m-1} \bigoplus_{i=0}^{m-1}  \Hom_{F_m}(K(g_2, \phi^j(\lambda)), U(\phi^i(\lambda))).\]
There are maps $g_3'(\phi^i(\lambda), \phi^j(\lambda)) \in \Hom_{F_m}(K(g_2, \phi^j(\lambda)), U(\phi^i(\lambda)))$ such that
\[ g_3'(\phi^i(\lambda), \phi^j(\lambda)) =  \phi^j \circ g_3'(\phi^{i-j}(\lambda), \lambda) \circ \phi^{-j},\] and hence is completely determined by $g_3'(\phi^\ell(\lambda), \lambda)$ for $\ell \in \{0, m-1\}$.  Again in terms of  the generators $K(g_2,\lambda) = \bigoplus_{j=1}^{t(g_2)} \tfrac{F[X]}{X-\lambda} v_j$ and $U(\lambda)= \bigoplus_{i=1}^{t(g_1)} \, \tfrac{F[X]}{X-\lambda} u_i$, the maps $g_3'(\phi^\ell(\lambda), \lambda)$ for $\ell \in \{0, m-1\}$ are described by matrices $B(g, \phi^\ell(\lambda))$ defined by $g_3'(\phi^\ell(\lambda), \lambda)(v_j)=\sum_i B(g, \phi^\ell(\lambda)) u_i$. 
Similarly, the map $g_3'(\alpha, \beta) \in \Hom_{F_m}( \beta \otimes F_m, F_m^k/(\alpha \otimes F_m))$ is completely described by the associated maps  $g_3'(\alpha, \beta)(\phi^\ell(\lambda), \lambda)$ for $\ell \in \{0, m-1\}$. The 
submatrix $B(g,\phi^\ell(\lambda))_{\alpha, \beta} \in \M_{r' \times r'}(F_m)$ of 
$B(g, \phi^\ell(\lambda))$ on the first $r'$ rows and columns of $B(g,\phi^\ell(\lambda))$
describes the map $g_3'(\alpha,\beta)(\phi^\ell(\lambda), \lambda)$.
The next lemma relates the matrices $A(g,\lambda)$ and $ B(g, \lambda) \in \M_{t(g_2) \times t(g_1)}(F_m)$.

\begin{lemma} \label{lem_C(g)_g_3}
   $A(g, \lambda)= c_\lambda   B(g, \lambda) \, \in \M_{t(g_2) \times t(g_1)}(F_m)$ where $c_\lambda \in F_m^\times$ equals $\lambda \prod_{i=1}^{m-1}(\lambda- \phi^i(\lambda))$.     Considering the submatrices on the first $r'$ rows and columns of both matrices, we get \[A(g, \lambda)_{\alpha, \beta}= c_\lambda   B(g, \lambda)_{\alpha, \beta} \in \M_{r' \times r'}(F_m).\]
\end{lemma}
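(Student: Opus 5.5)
We first make the snake-lemma connecting homomorphism $C(g)\colon K(g_2)\to \text{coker}(p(X))$ explicit in matrix terms. Take $v\in K(g_2)\subset F^{n-k}$ and lift it to $\tilde v=(0,v)\in F^n$. Since $g=\bbsm g_1 & g_1g_3\\ & g_2\besm$, every power $g^j$ has the form $\bbsm g_1^j & Y_j\\ & g_2^j\besm$. The maps $Y_j$ are determined by the recursion
\[ Y_{j+1}=g_1Y_j+g_1g_3g_2^j, \]
which gives $Y_j=\sum_{a+b=j-1} g_1^{a+1}g_3g_2^{b}$. Because $p(g_2)v=0$, the vector $p(g)\tilde v$ lies in $F^k$ and equals $\sum_j c_jY_jv$, where $p(X)=\sum_j c_jX^j$. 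By definition, $C(g)(v)$ is the class of this vector modulo $p(g_1)F^k$. So
\[ C(g)(v)=\Big[\sum_j c_j\sum_{a+b=j-1} g_1^{a+1}g_3g_2^b v\Big]. \]
This is the divided-difference expression $g_1\cdot\frac{p(X)-p(Y)}{X-Y}\big|_{X=g_1,\,Y=g_2}$ applied to $g_3$, with $g_1$ acting on the left of $g_3$ and $g_2$ on the right.

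Next we extend scalars to $F_m$ and restrict to the $\lambda$-eigencomponent $K(g_2,\lambda)$. On $K(g_2)$ the operator $g_2$ satisfies $p(g_2)=0$ and is semisimple, so on $K(g_2,\lambda)$ it acts as the scalar $\lambda$. On $\text{coker}(p(X))=F^k/p(X)F^k$ the induced action of $g_1$ is annihilated by $p(X)$, and on $U(\lambda)$ it also acts as $\lambda$. The component of $C(g)$ from $K(g_2,\lambda)$ to $U(\lambda)$ is obtained by composing with the projection of $F_m^k/p(X)F_m^k$ onto $U(\lambda)$. There $g_1$ acts by $\lambda$, and the right-hand factors act through $g_2=\lambda$. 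The map is therefore $\lambda\cdot q(\lambda,\lambda)\cdot g_3'(\lambda,\lambda)$, where $q(X,Y)=\frac{p(X)-p(Y)}{X-Y}$. Since $q(\lambda,\lambda)=p'(\lambda)=\prod_{i=1}^{m-1}(\lambda-\phi^i(\lambda))$, this yields $C(g,\lambda)=c_\lambda\, g_3'(\lambda,\lambda)$.

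One point needs care. We must justify that $X$ and $Y$ can be specialized independently even though $g_3$ sits between the two operators. We handle this by decomposing $g_3$ into its components $g_3'(\phi^i(\lambda),\phi^j(\lambda))$. On the component from the $\phi^j(\lambda)$-eigenspace to the $\phi^i(\lambda)$-eigenspace, $g_1^{a}g_3g_2^{b}$ acts as $\phi^i(\lambda)^a\,\phi^j(\lambda)^b$ times that component. It remains to check that $g_1$ really acts as a scalar on $U(\lambda)$, which is a quotient of the $p$-torsion module modulo $p(X)$. This holds because $F_m^k/p(X)F_m^k$ is killed by $p(X)$ and $p$ is separable, so $X$ acts semisimply with eigenvalues $\phi^i(\lambda)$. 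Taking $i=j=0$ gives the diagonal block. The result is $\lambda\, q(\lambda,\lambda)$, which is nonzero because $p$ has distinct roots and $\lambda\neq0$. Hence $c_\lambda\in F_m^\times$.

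Finally, both $A(g,\lambda)$ and $B(g,\lambda)$ are the matrices of $C(g,\lambda)$ and $g_3'(\lambda,\lambda)$ with respect to the same generators $v_j$ and $u_i$. The identity therefore holds entrywise. Because the generators were chosen compatibly with $\beta$ and $\bar\alpha$ as in \eqref{eq:generators_Kg_2}, restricting to the first $r'$ rows and columns gives the statement for the submatrices indexed by $(\alpha,\beta)$.

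The main obstacle is the first step: a careful bookkeeping of the upper-right block of $p(g)$ and of the left/right placement of $g_1$ and $g_2$ around $g_3$. An extra factor of $g_1$ there is exactly what produces the leading $\lambda$ in $c_\lambda$.
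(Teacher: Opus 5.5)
Your proposal is correct and follows essentially the same route as the paper: lift $v\in K(g_2,\lambda)$ to $(0,v)$, compute the upper-right block of $p(g)$, and evaluate it using that $g_2$ acts by $\lambda$ on $K(g_2,\lambda)$ and $g_1$ acts by $\lambda$ on $U(\lambda)$. The only difference is bookkeeping. You expand $p$ in monomials and obtain $\lambda\, q(\lambda,\lambda)=\lambda p'(\lambda)$ via the divided difference. The paper instead writes $p(X)=\prod_{i=1}^{m}(X-\phi^{m-i}(\lambda))$, so that every term of the corner block except one is killed by a right factor $(g_2-\lambda)$, leaving $\prod_{i=1}^{m-1}(g_1-\phi^{i}(\lambda))\,g_1g_3v$ directly.
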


\begin{proof} 
 The proof is obtained by chasing the diagram \eqref{eq:snake}.   We lift $v \in K(g_2, \lambda) \subset F_m^{n-k}$ to $(0,v) \in F_m^n$, and then apply $p(g)$ to  $(0,v)$.  Writing $p(X)=\prod_{i=1}^{m} (X-\phi^{m-i}(\lambda))$, we see  $p(g)=\bbsm p(g_1) & p(g)_3 \\ & p(g_2) \besm$ where 
\[ p(g)_3 = \sum_{i=0}^{m-1} 
(g_1-\phi^{m-1}(\lambda)) \dots (g_1-\phi^{m-i}(\lambda)) \, g_1g_3 \, (g_2-\phi^{m-i-2}(\lambda)) \dots (g_2-\lambda).\]
Therefore, for $v \in K(g_2,\lambda)$, we have 
\[p(g) \bbsm 0 \\ v \besm = \bbm  (g_1-\phi^{m-1}(\lambda)) \dots (g_1-\phi(\lambda))g_1g_3 v \\ 0 \bem. \]
We then have $C(g,\lambda) v \in U(\lambda)$ equals the projection of $(g_1-\phi^{m-1}(\lambda)) \dots (g_1-\phi(\lambda))g_1g_3 v \in F_m^k$ on $U(\lambda)$. Since $g_1$ acts as multiplication by $\lambda$ on $U(\lambda)$, we have 
\[ C(g, \lambda) v = c_\lambda \, g_3 v, \quad  c_\lambda = \lambda \prod_{i=1}^{m-1}(\lambda- \phi^i(\lambda)).\]
Since the matrix $A(g, \lambda)$ and $B(g,\lambda)$ represent the map 
$C(g,\lambda) \in \Hom_{F_m[X]}(K(g_2, \lambda), U(\lambda))$, and  
$g_3' \in \Hom_{F_m}(K(g_2,\lambda), U(\lambda))$ respectively, the proof is complete.
 \end{proof}
 For  $h \in \Hom_{F[X]}(\beta, F^k/\alpha)$ invertible, the inverse map $h^{-1}$ viewed as an element of $\Hom_{F_m[X]}( F_m^k/(\alpha \otimes F_m), \beta \otimes F_m)$ is of the form $\oplus_{i=0}^{m-1} \alpha'(\phi^i(\lambda)) \xrightarrow{h^{-1}_{\phi^i(\lambda)}} \oplus_{i=0}^{m-1} \beta(\phi^i(\lambda))$ with 
 $h^{-1}_{\phi^i(\lambda)} = \phi^i \circ h^{-1}_\lambda \circ \phi^{-i}$. We now consider the trace of the composite map $
 \beta \xrightarrow{g_3(\alpha, \beta)}  F^k/\alpha \xrightarrow{h^{-1}} \beta$. We have  
 \[ \tr(h^{-1} g_3(\alpha, \beta))= \sum_{i=0}^{m-1} \tr( h^{-1}_{\phi^i(\lambda)} g_3'(\alpha,\beta)(\phi^i(\lambda),\phi^i(\lambda))) = \tr_{F_m/F} \tr(h^{-1}_\lambda g_3'(\alpha,\beta)(\lambda,\lambda)). \]
 The map $g_3'(\alpha,\beta)(\lambda, \lambda)$ is represented by the matrix 
$B(g,\lambda)_{\alpha, \beta}$ which equals
$c_\lambda^{-1} A(g,\lambda)_{\alpha, \beta}$ by Lemma  \ref{lem_C(g)_g_3}. \\

We recall that $A(g,\lambda)_{\alpha, \beta} \in \M_{r' \times r'}(F_m)$ represents  the map $C(g,\lambda)_{\alpha, \beta}$.
Let $h_0 \in \GL_{r'}(F_m)$ denote the matrix of $h^{-1}_\lambda$ with respect to the generators $u_1, \dots, u_{r'}$ of 
$\alpha'(\lambda)= \oplus_{i=1}^{r'} \, \tfrac{F[X]}{X-\lambda} u_i$ 
and $v_1, \dots, v_{r'}$ of 
$\beta(\lambda) = \oplus_{j=1}^{r'} \tfrac{F[X]}{X-\lambda} v_j$. We now have 

\[ \tr(h^{-1} g_3(\alpha,\beta))=\tr_{F_m/F} \tr (c_\lambda^{-1}  h_0   A(g,\lambda)_{\alpha, \beta}).\]
We note that $\tilde \psi_0=\psi_0 \circ \tr_{F_m/F}$ is a non-trivial additive character of $F_m$, and hence  we have by Theorem \ref{LAA_thm}:
\begin{multline*}
     \sum_{h \in \text{Hom}_{F[X]}(\beta, F^k/\alpha)} \psi_0( \tr(h^{-1} g_3(\alpha,\beta) ))=
\sum_{h_0 \in \GL_{r'}(F_m)} \tilde \psi_0 ( \tr c_\lambda^{-1} h_0^{-1} A(g,\lambda)_{\alpha, \beta})\\=  (-1)^s (q')^{\binom{s}{2} +s(r'-s)} |\GL_{r'-s}(F_m)|, \end{multline*}
where $s$ is the rank of $C(g)_{\alpha, \beta}$.


\noindent{\textbf{\underline{Step 5:} Simplification of \eqref{eq:sum_over_h}} \hfill \\

Using the expression \eqref{eq:n_r_s} for $n_{r}(s)$    in \eqref{eq:sum_over_h} and 
summing this over $r \geq 1$, we see that the character of $\sum_{r \geq 1} \Ind_{P_{k,n}(r)}^{P_{k,n}} ( \psi_r \otimes [\St_{G_r} \otimes \Pi_{r,n-k}])$ at $g$ is:
\begin{multline} \label{eq:sum_r_t_s}
    (-1)^{n-k} \left( \sum_{r \geq 1}  (q')^{\binom{r'}{2}} ((q')^r;q')_{t(g_2)-r'}  \right) \cdot \left(\sum_t (-1)^{r'-t} \stirling{t(g_3)}{r'-t}_{q'} \stirling{t(g_2)-t(g_3)}{t}_{q'} (q')^{t(t(g_3)-r'+t)} \right) \\
    \cdot \left( \sum_s (-1)^{t-s} (q')^{\binom{s}{2} +s(r'-s)} |\GL_{r'-s}(F_m)|  \stirling{t}{t-s}_{q'} \stirling{t(g_1)-t}{t(g_1)-r'-t+s}_{q'}  (q')^{(t(g_1)-r'-t+s)s} \right).
\end{multline}

 Let $I_s$  denote the third parenthetical term in the above expression.  We will show that  $I_s$ equals  
$\stirling{t(g_1)-t}{r'-t}_{q'} |\GL_{r'-t}(F_m)| (q')^{t(r'-t) + \binom{t}{2}}$. To do so, we first write
\[\stirling{t(g_1)-t}{t(g_1)-r'-t+s}_q  |\GL_{r'-s}(F_m)|=(q')^{\binom{r'-s}{2}} \prod_{i=0}^{r'-s-1} ((q')^{t(g_1)-t-i}-1),  \]
using which the term $I_s$  can be rewritten as
\[ (q')^{\binom{r'}{2}} (\prod_{i=t}^{r'-1} ((q')^{t(g_1)-i}-1) ) \sum_s (-1)^{t-s} \stirling{t}{t-s}_{q'} (q')^{s(t(g_1)-r'-t+s)}    \prod_{i=0}^{t-s-1} ((q')^{t(g_1)-r'-i}-1),\]
which in turn can be rewritten as:
\[ (q')^{\binom{r'}{2}+t(t(g_1)-r') -\binom{t}{2} } (\prod_{i=t}^{r'-1} ((q')^{t(g_1)-i}-1) ) \sum_s (-1)^{t-s} \stirling{t}{t-s}_{q'} (q')^{\binom{s}{2}} ((q')^{r'-t(g_1)};q')_{t-s}.\]
The expression $\sum_s (-1)^{t-s} \stirling{t}{t-s}_{q'} (q')^{\binom{s}{2}} ((q')^{r'-t(g_1)};q')_{t-s}$ above, equals $(q')^{\binom{t}{2}} (q')^{(r'-t(g_1))t}$ by Lemma \ref{id_T}. Therefore the term $I_s$ equals
\begin{multline*}
    (q')^{\binom{t}{2}+(r'-t(g_1))t} (q')^{\binom{r'}{2}+t(t(g_1)-r') -\binom{t}{2} } (\prod_{i=t}^{r'-1} ((q')^{t(g_1)-i}-1) ) \\= (q')^{\binom{r'}{2} - \binom{r'-t}{2}} |\GL_{r'-t}(F_m)| \prod_{i=1}^{r'-t} (\tfrac{(q')^{t(g_1)-t-(i-1)}-1}{(q')^i-1} ),
\end{multline*} 
which can be rewritten as 
\[\stirling{t(g_1)-t}{r'-t}_{q'} |\GL_{r'-t}(F_m)| (q')^{t(r'-t) + \binom{t}{2}}, \]
which proves the claim about the term $I_s$. The expression \eqref{eq:sum_r_t_s}  can now be written as 
\begin{multline*}
    (-1)^{n-k} \left( \sum_{r' \geq 1}  (q')^{\binom{r'}{2}} ((q')^{r'};q')_{t(g_2)-r'}   \right) \cdot \\ \left(\sum_t (-1)^{r'-t} \stirling{t(g_3)}{r'-t}_{q'}    \stirling{t(g_1)-t}{r'-t}_{q'} |\GL_{r'-t}(F_m)| \stirling{t(g_2)-t(g_3)}{t}_{q'} (q')^{\binom{t}{2}+  t(g_3) t}\right). \end{multline*}
Writing 
\[ (q')^{\binom{r'}{2}} ((q')^{r'};q')_{t(g_2)-r'}= (-1)^{t(g_3)-r'+t} (q')^{\binom{t(g_3)+t}{2}}   \, ((q')^{t(g_3)+t};q')_{t(g_2)-t(g_3)-t} \, ((q')^{1-t(g_3)-t};q')_{t(g_3)-r'+t},\]
and using the fact that  $\stirling{t(g_3)}{r'-t}_{q'}   \stirling{t(g_1)-t}{r'-t}_{q'}  |\GL_{r'-t}(F_m)|=a(t(g_1)-t \times t(g_3),r'-t,q')$  we can rewrite \eqref{eq:sum_r_t_s} as
\begin{multline*}
    (-1)^{n-k} \left(  \sum_t (-1)^{t(g_3)} (q')^{\binom{t(g_3)+t}{2} + \binom{t}{2}+  t(g_3) t}((q')^{t(g_3)+t};q')_{t(g_2)-t(g_3)-t} \stirling{t(g_2)-t(g_3)}{t}_{q'} \right)\\  \cdot\left(\sum_{r'} a(t(g_1)-t \times t(g_3),r'-t,q')  ({q'}^{1-t(g_3)-t};q')_{t(g_3)-r'+t} \right). \end{multline*}
By Lemma \ref{amnr_lemma}, the second term in the product above is 
\[ (q')^{t(g_3)(t(g_1)-t)}\, ((q')^{1-t(g_3)-t(g_1)};q')_{t(g_3)}= (-1)^{t(g_3)} (q')^{-t(g_3)  t - \binom{t(g_3)}{2}} ((q')^{t(g_1)}; q')_{t(g_3)} .\] Therefore, we can rewrite \eqref{eq:sum_r_t_s} as
 
\begin{multline*}
    (-1)^{n-k}  (q')^{\binom{t(g_2)}{2} - \binom{t(g_3)} {2}} ((q')^{t(g_1)}; q')_{t(g_3)} \cdot \\  \left(  \sum_t (-1)^{t(g_2)-t(g_3)-t}  \stirling{t(g_2)-t(g_3)}{t(g_2)-t(g_3)-t}_{q'}  ((q')^{1-t(g_2)};q')_{t(g_2)-t(g_3)-t} (q')^{\binom{t}{2}}\right).  \end{multline*}
The last parenthetical term above equals 
\[(q')^{(1-t(g_2))(t(g_2)-t(g_3))} (q')^{\binom{t(g_2)-t(g_3)}{2}} = (q')^{\binom{t(g_3)}{2} - \binom{t(g_2)} {2}}, \]
by Lemma \ref{id_T}. Therefore  \eqref{eq:sum_r_t_s} equals
\[  (-1)^{n-k} ((q')^{t(g_1)};q')_{t(g_3)}= (-1)^{n-k} ((q')^{t(g_1)};q')_{t(g)-t(g_1)} \]
as was to be shown. This completes the proof of Theorem \ref{Pi_k_n_char}. 
\qed

\subsection{Universal property of $\Pi_{k,n}$ with respect to cuspidal representations} \hfill \\   The representation $(\Phi^+)^{n-1}(1)$ of  $\Mir_n$  satisfies a well known universal property with respect to restriction to $\Mir_n$ of cuspidal representations of $G_n$ (for example \cite[5.18]{BerZel[2]}): 
\[ \pi_\theta|_{\Mir_n}= (\Phi^+)^{n-1}(1) .\] We can express this in terms of the representation $\Pi_{1,n}$. Using  the representation $\Pi_{1,n}^\dagger = 1_{F^\times} \boxtimes (\Phi^+)^{n-1}(1)$ of $P_{n-1,n}=F^\times  \times \Mir_n$ (from Lemma \ref{Pi_1_n_Mir}), we can rewrite this universal property as 
\[ \pi_\theta|_{P_{n-1,n}}=  \Pi_{1,n}^\dagger \otimes \theta|_{F^\times},\]
where  $\theta|_{F^\times}(g)=\theta(g_{n,n})$.
Applying $\dagger$ to this, we get 
\[ \pi_\theta^\dagger| _{P_{1,n}}= \Pi_{1,n} \otimes \bar\theta|_{F^\times}  ,\]
where $\bar\theta|_{F^\times}(g)=\bar\theta(g_{1,1})$. Using  \eqref{eq:pi_theta_dagger} in this equation,  we get $\pi_{\bar \theta} |_{P_{1,n}}= \Pi_{1,n} \otimes \bar \theta|_{F^\times}$. Finally, replacing $\bar\theta$ with $\theta$, we get 
\[     \pi_{\theta} |_{P_{1,n}}= \Pi_{1,n} \otimes  \theta|_{F^\times}.\]
In summary, we have 
\begin{equation} \label{eq:univ_Pi_1_n}
   \pi_\theta|_{P_{n-1,n}}=  \Pi_{1,n}^\dagger \otimes \theta|_{F^\times}, \qquad \pi_{\theta} |_{P_{1,n}}= \Pi_{1,n} \otimes  \theta|_{F^\times}.
 \end{equation}

Parts (2)-(3) of the  next theorem generalize these properties \eqref{eq:univ_Pi_1_n} to $P_{k,n}$, and are consequences of part (1) of the theorem, which is much more general and applies not only to cuspidal representations but also to the class functions $\Theta_\theta$:

  \begin{theorem} \label{Pi_universal_property}
        Let $P_{k,n}$ be a maximal parabolic subgroup of $G_n$. For $g  = \bbsm g_1 & g_1 g_3 \\0 & g_2 \besm \in P_{k,n}$, let $p_1 \colon P_{k,n} \to G_k$ be the epimorphism $p_1(g)=g_1$, and let $p_2 \colon P_{k,n} \to G_{n-k}$ be the epimorphism $p_2(g)=g_2$. Let $d=\text{gcd}\{k,n\}$.
     \begin{enumerate}
         \item[\emph{(1)}] Let $\theta$ be any character of $F_n^\times$, and let 
         $\theta_k$ be any character of $F_k^\times$ whose restriction to $F_d^\times$ is $\theta|_{F_d^\times}$ (such a $\theta_k$ exists by  part $(1)$ of Lemma \ref{imath_lemma}). Let $\Theta_\theta$ and $\Theta_{\theta_k}$ be the associated class functions on $G_n$ and $G_k$ respectively (see Definition \ref{Theta_definition}). We recall that  $\Theta_{k,n}$ denotes  the character of $\Pi_{k,n}$. For $g \in P_{k,n}$, we have:
         \[  \Theta_\theta(g)= \Theta_{k,n}(g) \cdot \Theta_{\theta_k}(g_1).\]
    
\item[\emph{(2)}] Let $\theta$ be a regular character of $F_n^\times$ and let $\pi_\theta$ be the associated cuspidal representation of $G_n$. If  either $k \nmid n$, or if $k\mid n$  and $\theta|_{F_k^\times}$ is a regular character of $F_k^\times$, there is a cuspidal representation $\pi_{\tilde\theta}$ of $G_k$ such that 
    \[ \pi_\theta|_{P_{k,n}} \simeq \Pi_{k,n} \otimes \pi_{\tilde \theta},
    \]
    where the representation $\pi_{\tilde\theta}$ of $G_k$ is inflated to $P_{k,n}$ via $p_1$. In the former case $k \nmid n$, we can take $\tilde\theta$ (by part $(2)$ of Lemma \ref{imath_lemma})  to be a regular character of $F_k^\times$ such that  $\tilde\theta|_{F_d^\times} = \theta|_{F_d^\times}$. In the latter case $\tilde \theta = \theta|_{F_k^\times}$. \\
 
\item[\emph{(3)}] Let $\pi_\theta$ be as in part (2) above. If  either $(n-k) \nmid n$, or if $(n-k) \mid n$ and $\theta|_{F_{n-k}^\times}$ is a regular character of $F_{n-k}^\times$, there is a cuspidal representation $\pi_{\tilde\theta}$ of $G_{n-k}$ such that 
    \[ \pi_\theta|_{P_{k,n}} \simeq \Pi_{n-k,n}^\dagger \otimes \pi_{\tilde \theta},
    \]
    where the representation $\pi_{\tilde\theta}$ of $G_{n-k}$ is inflated to $P_{k,n}$ via $p_2$.  In the former case $(n-k) \nmid n$, we can take $\tilde\theta$ (by part $(2)$ of Lemma \ref{imath_lemma})  to be a regular character of $F_{n-k}^\times$ such that  $\tilde\theta|_{F_d^\times} = \theta|_{F_d^\times}$. In the latter case $\tilde \theta = \theta|_{F_{n-k}^\times}$.

    \end{enumerate}
\end{theorem}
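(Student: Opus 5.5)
Part (1) is an identity of class functions on $P_{k,n}$, and I will prove it by comparing the two sides case by case, according to the semisimple part $s_1$ of $g_1$. In every case the input is Definition \ref{Theta_definition}, Lemma \ref{jordan_chevalley_for_P_k_n} and Theorem \ref{Pi_k_n_char}.

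If $s_1$ does not come from $F_k^\times$, then $\Theta_{\theta_k}(g_1)=0$. By part (2) of Lemma \ref{jordan_chevalley_for_P_k_n}, $s$ then does not come from $F_n^\times$, so $\Theta_\theta(g)=0$ as well. Note that we need no information about $\Theta_{k,n}(g)$ here, which is exactly why Theorem \ref{Pi_k_n_char} only had to compute $\Theta_{k,n}$ at $g$ with $s_1$ coming from $F_k^\times$.

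If $s_1$ comes from $F_k^\times$ but $s$ does not come from $F_n^\times$ (equivalently, by Lemma \ref{jordan_chevalley_for_P_k_n}, not from $F_d^\times$), then Theorem \ref{Pi_k_n_char} gives $\Theta_{k,n}(g)=0$, and both sides vanish.

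The remaining case is that $s$ comes from $F_d^\times$, with eigenvalue $\lambda$, $F[\lambda]=F_m$, $m\mid d$ and $q'=q^m$. Then
\begin{align*}
\Theta_\theta(g)&=(-1)^{n-1}\Bigl(\sum_{i=0}^{m-1}\theta(\phi^i\lambda)\Bigr)\prod_{j=1}^{t(g)-1}(1-{q'}^{j}),\\
\Theta_{\theta_k}(g_1)&=(-1)^{k-1}\Bigl(\sum_{i=0}^{m-1}\theta_k(\phi^i\lambda)\Bigr)\prod_{j=1}^{t(g_1)-1}(1-{q'}^{j}).
\end{align*}
Since $\phi^i\lambda\in F_m\subset F_d$ and $\theta_k|_{F_d^\times}=\theta|_{F_d^\times}$, the two Galois sums agree. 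The signs differ by $(-1)^{n-k}$. The ratio of the products is $\prod_{j=t(g_1)}^{t(g)-1}(1-{q'}^j)=({q'}^{t(g_1)};q')_{t(g)-t(g_1)}$. These two factors are exactly $\Theta_{k,n}(g)$ from Theorem \ref{Pi_k_n_char}, so $\Theta_\theta(g)=\Theta_{k,n}(g)\Theta_{\theta_k}(g_1)$. The identity is consistent when $t(g_1)=0$, but that cannot occur because $k\ge 1$.

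\textbf{Part (2).} Take $\tilde\theta$ regular with $\tilde\theta|_{F_d^\times}=\theta|_{F_d^\times}$. Such a $\tilde\theta$ exists by part (2) of Lemma \ref{imath_lemma} when $k\nmid n$, since then $d$ is a proper divisor of $k$. When $k\mid n$ we have $d=k$, and we take $\tilde\theta=\theta|_{F_k^\times}$, which is regular by hypothesis. Applying part (1) with $\theta_k=\tilde\theta$ and using Theorem \ref{character value of cuspidal representation (Dipendra)} for both $G_n$ and $G_k$, we get $\chi_{\pi_\theta}(g)=\Theta_{k,n}(g)\,\chi_{\pi_{\tilde\theta}}(p_1(g))$ on $P_{k,n}$. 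Equality of characters gives the isomorphism.

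\textbf{Part (3).} The plan is to reduce to part (2) via $\dagger$. We have $P_{k,n}=P_{n-k,n}^\dagger$, and $\dagger$ carries the block $g_1$ of an element of $P_{n-k,n}$ to a block conjugate to $g_2^{-\top}$ of the image. Apply part (2) to $P_{n-k,n}$ with the regular character $\bar\theta$; the divisibility hypotheses on $n-k$ are unchanged under conjugation. This gives $\pi_{\bar\theta}|_{P_{n-k,n}}\simeq\Pi_{n-k,n}\otimes\pi_{\tilde\theta'}$. Now apply $\dagger$ and use \eqref{eq:pi_theta_dagger} on both $G_n$ and $G_{n-k}$. This yields $\pi_\theta|_{P_{k,n}}\simeq\Pi^\dagger_{n-k,n}\otimes(\pi_{\tilde\theta'}\circ\dagger_{n-k})$, and $\pi_{\tilde\theta'}\circ\dagger_{n-k}\simeq\pi_{\overline{\tilde\theta'}}$, inflated via $p_2$. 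Setting $\tilde\theta=\overline{\tilde\theta'}$ gives the required restriction property.

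The main obstacle is the bookkeeping in this last step: checking that $\dagger$ on $G_n$, restricted to the Levi factor, is $\dagger$ (up to conjugation) on the diagonal blocks. For the character identity, conjugation invariance is all that is needed.
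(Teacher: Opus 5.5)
Your proposal is correct and follows essentially the same route as the paper: the same three-case comparison for part (1) via Lemma \ref{jordan_chevalley_for_P_k_n} and Theorem \ref{Pi_k_n_char}, part (2) by reading part (1) as a character identity, and part (3) by transporting part (2) through $\dagger$ using \eqref{eq:pi_theta_dagger}. Your bookkeeping in the last case of part (1) is slightly more careful than the paper's displayed computation. You use $\prod_{j=1}^{t-1}(1-{q'}^j)=(q';q')_{t-1}$, and you check explicitly that the two Galois sums agree because $\theta_k|_{F_d^\times}=\theta|_{F_d^\times}$; the paper instead writes $(q';q')_{t(g_1)}$ and $(q';q')_{t(g)}$ where $(q';q')_{t(g_1)-1}$ and $(q';q')_{t(g)-1}$ are meant.
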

\begin{proof} (1) 
Let $g = s u$ be decomposition of $g=\bbsm g_1 & g_1g_3\\ & g_2 \besm$  into its semisimple and unipotent parts, with $s=\bbsm s_1 & s_3\\ &s_2 \besm$ and $u= \bbsm u_1  &u_3\\ & u_2 \besm$. By Lemma \ref{jordan_chevalley_for_P_k_n},  $s_1$ and $s_2$ are the semisimple parts of $g_1$ and $g_2$ respectively.  If $s_1$  does not come from $F_k^\times$, then $s$ also does not come from $F_n^\times$, and hence  $\Theta_\theta(g)$ and $\Theta_{\theta_k}(g_1)$ are both zero by Definition \ref{Theta_definition}, which verifies  $\Theta_\theta(g)= \Theta_{k,n}(g) \cdot \Theta_{\theta_k}(g_1)$. We now assume $s_1$ comes from $F_k^\times$. In this case if $s$ does not come from $F_n^\times$, then $\Theta_{k,n}(g)=0$ by Theorem \ref{Pi_k_n_char}, and $\Theta_\theta(g)=0$ by Definition \ref{Theta_definition}, which again verifies  $\Theta_\theta(g)= \Theta_{k,n}(g) \cdot \Theta_{\theta_k}(g_1)$. We now assume $s_1$ comes from $F_k^\times$ and $s$ comes from $F_n^\times$ (and hence both come from $F_d^\times$). If $\lambda \in F_m$ is a root of the minimal polynomial $p(X)$ of $s$, then by Theorem \ref{Pi_k_n_char} and Definition \ref{Theta_definition} we have
\begin{multline*} \Theta_{k,n}(g) \cdot \Theta_{\theta_k}(g_1)= \left[ (-1)^{n-k} ({q'}^{t(g_1)};q')_{t(g)-t(g_1)}\right]  \cdot 
\left[(-1)^{k-1} \left( \sum_{i=0}^{m-1} \theta(\phi^i(\lambda)) \right) \,(q';q')_{t(g_1)} \right] \\ = (-1)^{n-1} \left( \sum_{i=0}^{m-1} \theta(\phi^i(\lambda)) \right) \, (q';q')_{t(g)} = \Theta_\theta(g).
\end{multline*}

(2) This simply follows from part (1), by noting that $\Theta_{\theta_k}$ is the character of the cuspidal representation $\pi_{\tilde\theta}$ of $G_k$. \\

(3) By part (2), there is a cuspidal representation $\pi_{\tilde\theta}$ of $G_{n-k}$ such that  $\pi_\theta|_{P_{n-k,n}} \simeq \Pi_{n-k,n} \otimes \pi_{\tilde \theta}$. From this we get
\[ \pi_\theta^\dagger|_{P_{k,n}} \simeq \Pi_{n-k,n}^\dagger \otimes \pi_{\tilde \theta}^\dagger.\]
Using \eqref{eq:pi_theta_dagger}, we get 
\[ \pi_{\bar\theta}|_{P_{k,n}} \simeq \Pi_{n-k,n}^\dagger \otimes \pi_{\widetilde{\bar\theta}},\]
Finally, replacing the regular character $\bar\theta$  with $\theta$, we get 
\[ \pi_{\theta}|_{P_{k,n}} \simeq \Pi_{n-k,n}^\dagger \otimes \pi_{\tilde\theta},\]
where the cuspidal representation $\pi_{\tilde\theta}$ of $G_{n-k}$ is inflated to $P_{k,n}$ via $p_2$.\\


\end{proof}
The inverse transpose automorphism $g \mapsto g^{-\top}$ carries the subgroup $P^*_{k,n}$ of $G_n$ consisting of matrices of the form $g=\bbsm g_1 & \\g_5g_1 & g_4 \besm$ to the parabolic subgroup $P_{k,n}$. This allows us to define:
\begin{definition} \label{Pi^*_def}
    We define a representation $\Pi^*_{k,n}$ of $P^*_{k,n}$ by 
    \[ \Pi^*_{k,n}(g)=\Pi_{k,n}(g^{-\top}).\]
\end{definition}
The character of $\Pi^*_{k,n}$ will be denoted $\Theta^*_{k,n}$. The results of Theorem
\ref{Pi_universal_property} have obvious analogues for the representation $\Pi^*_{k,n}$ of $P^*_{k,n}$. We only record here the first property. Let $\theta$ and $\theta_k$  be as in part (1) of Theorem \ref{Pi_universal_property}. For $g=\bbsm g_1 & \\g_5 g_1 & g_4 \besm \in P^*_{k,n}$, we have: 
         \begin{equation} \label{eq:Pi*_universal_property}
         \Theta_\theta(g)= \Theta^*_{k,n}(g) \cdot \Theta_{\theta_k}(g_1).
\end{equation}

\section{Structure  of $\pi_{N,\psi}$} \label{structure}
As before, let $\pi_\theta$ be a 
cuspidal representation of $G_n$, and $P=MN$  a maximal parabolic subgroup of $G_{n}$ associated with 
the partition $(k,n-k)$ of $n$, and 
$\psi$ a rank $r$ character  of the unipotent radical $N \simeq \M_{k \times n-k}(F)$. In this section we determine the structure of the twisted Jacquet module $\pi_{N, \psi}$ for general $r$.   We recall from \eqref{eq:Mpsi} that $M_{\psi}$ (which we denote $M_r$ here)  equals:
\begin{multline*} \label{eq:Mpsi}
    M_r=\bigg\{ \bbsm g_1&0\\0&g_2 \besm \colon g_1=\bbsm h_1 &  0\\
    h_5 & h_4\besm, \; g_2=\bbsm
    h_1& h_3\\0&h_2 \besm, \; h_1\in G_r, \, h_4\in G_{k-r},\\ h_2\in G_{n-k-r}, \,   
    h_5\in \M_{k-r \times r}(F), \, h_3\in \M_{r \times n-k-r}(F)
    \bigg\}.\end{multline*}
For $g=\bbsm g_1&0\\0&g_2 \besm \in M_r$ with 
$g_1=\bbsm h_1 &  0\\
    h_5 & h_4\besm$ and $g_2=\bbsm
    h_1& h_3\\0&h_2 \besm$, let 
    \[ M_r \xrightarrow{p_1} P^*_{r,k}, \quad M_r \xrightarrow{p_2} P_{r,n-k}, \quad  M_r \xrightarrow{p_3} G_r,\]
    be the epimorphisms defined by 
    \[ p_1(g)=g_1, \quad  p_2(g)=g_2, \quad  p_3(g)=h_1.\]
    Let $d=\text{gcd}(r,k,n)$ and let $\theta_r$ be a character of $F_r^\times$ with the property that $\theta_r|_{F_d^\times} = \theta|_{F_d^\times}$ (such a $\theta_r$ exists by  part (1) of Lemma \ref{imath_lemma}). Consider the representation $\Ind_{F_r^\times}^{G_r} \theta_r$ of $G_r$. The inflation of this representation to $M_r$ via the epimorphism $p_3$ will be denoted by the same symbol $\Ind_{F_r^\times}^{G_r} \theta_r$. Let $\Pi_{r,n-k}$ and $\Pi^*_{r,k}$ be the representations of $P_{r,n-k}$ and $P^*_{r,k}$ respectively, defined in Section \ref{section_Pi}. The inflations of $\Pi_{r,n-k}$ and $\Pi^*_{r,k}$ to $M_r$ via the epimorphisms $p_2$ and $p_1$ will be denoted by the same symbols. 
\begin{theorem} \label{main_theorem}
Let $\pi=\pi_\theta$ be a cuspidal representation of $G_n$, and let $P$ be a maximal parabolic subgroup of $G$ associated with the partition $(k,n-k)$ of $n$. Let $P=MN$ be the Levi decomposition of $P$, and let $\psi$ be a rank $r$ character of $N$. Let $\theta_r$ be a character of $F_r^\times$ as above. The  twisted Jacquet module $\pi_{N, \psi}$ as a representation of $M_r$ is isomorphic to 
\[ \pi_{N, \psi} \simeq \Pi^*_{r,k} \otimes \Pi_{r,n-k} \otimes \Ind_{F_r^\times}^{G_r} \theta_r. \] 
In particular,
\[ \dim  \pi_{N,\psi} =
q^{\binom{r}{2}} \prod_{i=1}^{r-1}(q^i-1) \cdot \prod_{i=r}^{k-1}(q^i-1) \cdot \prod_{i=r}^{n-k-1}(q^i-1).\]
\end{theorem}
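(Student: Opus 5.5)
The plan is to compute the character of $\pi_{N,\psi}=\pi_r$ on $M_r$ and to match it, as a product, with the character of $\Pi^*_{r,k}\otimes\Pi_{r,n-k}\otimes\Ind_{F_r^\times}^{G_r}\theta_r$. All three factors will come from results already proved, so no new character sums are needed. Since two representations are isomorphic exactly when their characters agree, this suffices. We may take $\psi=\psi_r$, because every rank $r$ character is $M$-conjugate to $\psi_r$. The dimension formula then follows by evaluating at the identity, using Lemma \ref{dim_Pi_k_n} for $\Pi^*_{r,k}$ and $\Pi_{r,n-k}$, and $\dim\Ind_{F_r^\times}^{G_r}\theta_r=|G_r|/(q^r-1)=q^{\binom r2}\prod_{i=1}^{r-1}(q^i-1)$.

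\emph{Step 1 (choice of auxiliary characters).} Fix $\theta_r$ with $\theta_r|_{F_d^\times}=\theta|_{F_d^\times}$, where $d=\gcd(r,k,n)$. I then choose a character $\theta_k$ of $F_k^\times$ such that $\theta_k|_{F_{\gcd(k,n)}^\times}=\theta|_{F_{\gcd(k,n)}^\times}$ and $\theta_k|_{F_{\gcd(r,k)}^\times}=\theta_r|_{F_{\gcd(r,k)}^\times}$. Both prescriptions live on subgroups of the cyclic group $F_k^\times$ whose intersection is $F_d^\times$, and they agree there by the choice of $\theta_r$. Hence they define a character of the product subgroup, and this extends to all of $F_k^\times$. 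This compatibility bookkeeping is the one place where genuine care is needed, and I expect it to be the main (though mild) obstacle.

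\emph{Step 2 (reduction to $\Pi_{k,n}$).} By part (1) of Theorem \ref{Pi_universal_property}, the character of $\pi_\theta|_{P_{k,n}}$ is $\Theta_{k,n}(g)\,\Theta_{\theta_k}(g_1)$. This is an identity of class functions; no regularity of $\theta_k$ is needed. Now apply Proposition \ref{character_Jacquet} at $m\in M_r$. Multiplying $m$ by $n\in N$ does not change the block $g_1$, so $\Theta_{\theta_k}(g_1)$ factors out of the average over $N$. Therefore
\[\chi_{\pi_r}(m)=\chi_{\Phi_r^-(\Pi_{k,n})}(m)\cdot\Theta_{\theta_k}(g_1).\]
By Definition \ref{def_Pi} and part (c) of Theorem \ref{Phi+-_properties}, $\Phi_r^-(\Pi_{k,n})\simeq\St_{G_r}\otimes\Pi_{r,n-k}$ for $r\ge1$. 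Its character is therefore $\St_{G_r}(h_1)\,\Theta_{r,n-k}(g_2)$. The case $r=0$ has $\Phi_0^-(\Pi_{k,n})=0$, consistent with cuspidality.

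\emph{Step 3 (factor $\Theta_{\theta_k}(g_1)$ and assemble).} For $g_1=p_1(m)\in P^*_{r,k}$, apply \eqref{eq:Pi*_universal_property} with $(k,n)$ replaced by $(r,k)$, $\theta$ replaced by $\theta_k$, and $\theta_r$ in the role of the auxiliary character. This is legitimate because $\theta_r$ and $\theta_k$ agree on $F_{\gcd(r,k)}^\times$. It gives
\[\Theta_{\theta_k}(g_1)=\Theta^*_{r,k}(g_1)\,\Theta_{\theta_r}(h_1).\]
Finally, part (1) of Lemma \ref{ind=st_times_cuspidal}, which holds for an arbitrary character $\theta_r$, gives $\St_{G_r}(h_1)\Theta_{\theta_r}(h_1)=\chi_{\Ind_{F_r^\times}^{G_r}\theta_r}(h_1)$. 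Combining, $\chi_{\pi_r}(m)$ equals $\Theta^*_{r,k}(p_1m)\,\Theta_{r,n-k}(p_2m)\,\chi_{\Ind\theta_r}(p_3m)$. This is exactly the character of the claimed tensor product inflated to $M_r$, which proves the isomorphism.
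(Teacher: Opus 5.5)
Your proposal is correct and follows the paper's proof essentially step for step. You restrict $\pi_\theta$ via part (1) of Theorem \ref{Pi_universal_property}, pass to the rank-$r$ twisted Jacquet module of $\Pi_{k,n}$ using Proposition \ref{character_Jacquet}, factor $\Theta_{\theta_k}(g_1)$ with \eqref{eq:Pi*_universal_property}, and recombine $\St_{G_r}\cdot\Theta$ into the character of an induced representation. The one difference is that you choose $\theta_k$ to agree simultaneously with $\theta$ on $F_{\gcd(k,n)}^\times$ and with $\theta_r$ on $F_{\gcd(r,k)}^\times$, which is possible because these subgroups of the cyclic group $F_k^\times$ meet in $F_d^\times$. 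This lets you use $\theta_r$ directly and skip the paper's closing support argument, which shows the character vanishes unless the minimal polynomial of $h_1$ has degree dividing $d$. You also correctly invoke part (1), rather than part (2), of Lemma \ref{ind=st_times_cuspidal}, since the relevant character of $F_r^\times$ need not be regular.
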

\begin{proof}
Let $\Theta_{N, \psi}$ denote the character of  twisted Jacquet module $\pi_{N, \psi}$ of $\pi_\theta$ for a rank $r$ character $\psi$ of $N$.
We recall that the character of $\pi_\theta$ equals the class function $\Theta_\theta$ of Definition \ref{Theta_definition}.
By part (1) of Theorem \ref{Pi_universal_property}, for $g' = \bbsm g_1 & g_1g_3 \\& g_2 \besm \in P_{k,n}$, we have \[ \Theta_{\theta}(g')=\Theta_{k,n}(g') \cdot \Theta_{\theta_k}(g_1), \]
where we recall that $\theta_k$ is any character of $F_k^\times$ such that the restrictions to $F_{\text{gcd}\{k,n\}}^\times$ of $\theta_k$ and $\theta$ are equal, and where $\Theta_{\theta_k}$ is the  class function (see Definition \ref{Theta_definition}) associated to $\theta_k$.  Using this in Proposition \ref{character_Jacquet} (with a rank $r$ character $\psi_r$ of $N$) for calculating  the character $\Theta_{N, \psi}$
at $g=\bbsm g_1 & \\ & g_2 \besm \in M_r$ with $g_1=\bbsm h_1 &  0\\
    h_5 & h_4\besm$ and $g_2=\bbsm
    h_1& h_3\\0&h_2 \besm$, we get:
\[ \Theta_{N, \psi}(g) = \Theta_{\theta_k}(g_1) \cdot \chi_{ (\Pi_{k,n})_r}(g). \]
 By Definition \ref{def_Pi}, we have $ (\Pi_{k,n})_r= \St_{G_r} \otimes \Pi_{r,n-k}$, and hence we get
\[ \Theta_{N, \psi}(g) = \left[ \Theta_{\theta_k}(g_1) \cdot \St_{G_r}(h_1) \right] \cdot \Theta_{r,n-k}(g_2). \]
Applying the result in equation \eqref{eq:Pi*_universal_property} with $(k,n, \text{gcd}\{k,n\},\theta) \mapsto (r,k,\text{gcd}\{r,k\}),\theta_k)$, we see that for $\tilde\theta$ any  character  of $F_r^\times$ such that the restrictions to $F_{\text{gcd}\{r,k\}}^\times$ of $\tilde\theta$ and $\theta_k$ are equal, we have
\[    \Theta_{\theta_k}(g_1)= \Theta^*_{r,k}(g_1) \cdot \Theta_{\tilde\theta}(h_1).\]
Using this in the previous equation, we get:
\[ \Theta_{N, \psi}(g) = \left[ \Theta_{\tilde\theta}(h_1) \cdot \St_{G_r}(h_1) \right] \cdot \Theta^*_{r,k}(g_1)  \cdot \Theta_{r,n-k}(g_2). \]
By part (2) of Lemma \ref{ind=st_times_cuspidal}, we can rewrite this as
\[ \Theta_{N, \psi}(g) = \chi_{\Ind_{F_r^\times}^{G_r} \tilde\theta}(h_1)   \cdot \Theta^*_{r,k}(g_1)  \cdot \Theta_{r,n-k}(g_2). \]
The character of $\Ind_{F_r^\times}^{G_r} \beta$ at $h_1 \in G_r$ for any character $\beta$ of $F_r^\times$, is zero unless $h_1$ comes from $F_r^\times$. Let $m|r$ be the degree of the   minimal polynomial $p(X)\in F[X]$ of $h_1$. Since $g_2=\bbsm
    h_1& h_3\\0&h_2 \besm$, we have by Theorem \ref{Pi_k_n_char} (which applies as the semisimple part of $h_1$, which is $h_1$ itself, does come from $F_r^\times$), that $\Theta_{r,n-k}(g_2)=0$ unless the semisimple part of $g_2$ comes from $F_{n-k}^\times$. By part (2) of Lemma \ref{jordan_chevalley_for_P_k_n}, this implies that the degree $m$ of $p(X)$ divides $\text{gcd}\{r,n-k\}$. Repeating this argument with 
    $g_1=\bbsm h_1 &  0\\
    h_5 & h_4\besm$ and $\Theta^*_{r,k}(g_1)$, we see that $m| \text{gcd}\{r,k\}$, and hence $m$ divides $d=\text{gcd}\{r,k,n\}$.
In particular, the value of  the character of $\Ind_{F_r^\times}^{G_r} \tilde\theta$ at $h_1$ only depends on $ \tilde\theta|_{F_d^\times}$. Since $F_d \subset F_{\text{gcd}\{r,k\}}$,  and by definition of $\tilde\theta$, the restrictions to $F_{\text{gcd}\{r,k\}}^\times$ of $\tilde\theta$ and $\theta$ are equal, we see that $ \tilde\theta|_{F_d^\times}=\theta|_{F_d^\times}=\theta_r|_{F_d^\times}$, and hence, we may replace 
$\Ind_{F_r^\times}^{G_r} \tilde\theta$ with $\Ind_{F_r^\times}^{G_r} \theta_r$. Using this in the equation above, we conclude 
\[ \pi_{N, \psi} \simeq \Pi^*_{r,k} \otimes \Pi_{r,n-k} \otimes \Ind_{F_r^\times}^{G_r} \theta_r.\]
\end{proof}





\providecommand{\bysame}{\leavevmode\hbox to3em{\hrulefill}\thinspace}
\providecommand{\MR}{\relax\ifhmode\unskip\space\fi MR }
\providecommand{\MRhref}[2]{%
  \href{http://www.ams.org/mathscinet-getitem?mr=#1}{#2}
}
\providecommand{\href}[2]{#2}

\end{document}